\newtheorem{thm}{Theorem}[section]
\newtheorem{prop}{Proposition}[section]
\newtheorem{cor}{Corollary}[section]
\newtheorem{hyp}{Assumption}[section]
\newtheorem{definition}{Definition}[section]
\newtheorem{lemme}{Lemma}[section]
\newtheorem{Claim}{Claim}
\renewcommand{\d}{\, \mathrm{d}}
\newcommand{\R}{\mathbb{R}}
\renewcommand{\S}{\mathbb{S}}
\renewcommand{\P}{\mathbb{P}\,}
\newcommand{\N}{\mathbb{N}}
\newcommand{\B}{\mathcal{B}}
\newcommand{\E}{\mathbb{E}}
\newcommand{\V}{\mathbb{V}}
\newcommand{\F}{\mathscr{F}}
\renewcommand{\L}{\mathbb{L}}
\newcommand{\M}{\mathcal{M}}
\newcommand{\I}{{\mathbf{I}}}
\newcommand{\W}{{\mathbb{W}}}
\newcommand{\Ker}{\text{Ker}\,}
\newcommand{\LL}{\mathbb{L}}
\newcommand{\XX}{\mathbb{X}}
\newcommand{\TT}{\mathbb{T}} 
\newcommand{\x}{\mathtt{x}}
\newcommand{\carre}{\qed}
\renewcommand{\t}{\mathtt{t}}
   \newcommand{\smallO}[1]{\ensuremath{\mathop{}\mathopen{}{\scriptstyle\mathcal{O}}\mathopen{}\left(#1\right)}}
\newcommand{\1}{\mathbbm{1}}
\newcommand{\ttheta}{\boldsymbol{\theta}}
\newcommand{\tttheta}{\boldsymbol{\hat{\theta}}}
\title{Model selection for Poisson processes with covariates}
\date{June, 2013} 
\author{Mathieu Sart} 
\address{Univ. Nice Sophia Antipolis, CNRS,  LJAD, UMR 7351, 06100 Nice, France.}
\email{msart@unice.fr}
\keywords{Adaptive estimation, Model selection, Poisson processes, T-estimator.} 
\subjclass[2010]{62M05, 62G05}
\begin{document}

\begin{abstract}
We observe $n$ inhomogeneous Poisson processes with covariates and aim at estimating their intensities.  We assume that the intensity of each Poisson process is of the form $s (\cdot, x)$ where $x$ is the covariate and where $s$ is an unknown function. We propose a model selection approach where the models are used to approximate the multivariate function $s$.
We show that our estimator satisfies an oracle-type inequality under very weak assumptions both on the intensities and the models.  By using an Hellinger-type loss, we establish non-asymptotic risk bounds and specify them under several kind of assumptions on the target function $s$ such as being smooth or a product function. Besides, we show that our estimation procedure is robust with respect to these assumptions.
\end{abstract}
\maketitle

\section{Introduction} \label{sectionintro}
We consider $n$ independent Poisson point processes $N_i$ for $i=1,\dots,n$ indexed by the measurable space $(\mathbb{T},\mathscr{T})$. For each $i$, we assume that the intensity of $N_i$ with respect to some reference measure~$\mu$ on $(\TT,\mathscr{T})$ is of the form $s_i (\cdot) = s(\cdot,x_i)$ where~$x_i$ is a deterministic element of some measurable set $(\XX,\mathscr{X})$ and~$s$ is a non-negative function on $\TT \times \XX$ satisfying 
\begin{eqnarray*} \label{conditionsurs}
\forall i \in \{1,\dots, n\}, \quad \int_{\TT} s(t,x_i) \d \mu(t) < + \infty.
\end{eqnarray*}
Typically, this corresponds to the modelling of the times of failure of $n$ repairable systems where the reliability of each of them depends on external factors measured by some covariates $x_1,\dots, x_n$, in which case $\TT$ corresponds to an interval of time, say $[0,1]$, and $\XX$ to some compact subset of $\R^k$, say $[0,1]^k$.
Our aim is to estimate~$s$ from the observations of the pairs $(N_i,x_i)_{1 \leq i \leq n}$. 

Let $\L_{+}^1 (\TT \times \XX, M)$ be the cone of integrable and non negative functions on $(\TT \times \XX, \mathscr{T} \otimes \mathscr{X} )$ equipped with the product measure $M =  \mu \otimes \nu_n$ where  $\nu_n =  n^{-1} \sum_{i=1}^n \delta_{x_i}$. In order to evaluate the risks of our estimators, we endow $\L_{+}^1 (\TT \times \XX, M)$ with the Hellinger-type distance $H$ defined for $u,v \in \L_{+}^1 (\TT \times \XX, M)$  by 
\begin{eqnarray*}
2 H^2(u,v) &=&  \int_{\TT \times \XX}  \left( \sqrt{u (t,x)} - \sqrt{v(t,x)}  \right)^2 \d M(t,x)  \\
&=& \frac{1}{n} \sum_{i=1}^n \int_{\TT} \left( \sqrt{u (t,x_i)} - \sqrt{v(t,x_i)}  \right)^2 \d \mu (t).
\end{eqnarray*}
Let $(\L^2 (\TT \times \XX, M), d_2)$ be the metric space of  functions $f$ on $\TT \times \XX$ such that $f^2$  belongs to $\L_{+}^1 (\TT \times \XX, M)$. Given  a suitable collection $\V$ of models (i.e subsets of $\L^2 (\TT \times \XX, M)$  which are not necessarily linear spaces) and a non-negative application~$\Delta$ on $\V$   satisfying
$$\sum_{V \in \V} e^{-\Delta(V)} \leq 1 ,$$
we build an estimator $\hat{s}$ whose risk $\E \left[H^2(s,\hat{s}) \right]$ satisfies
\begin{eqnarray}\label{premiereineqoracle}
C \E \left[ H^2(s,\hat{s}) \right] \leq  \inf_{V \in \V} \left\{d_2^2 \left( \sqrt{s}, V \right) + \eta_V^2 + \frac{\Delta(V)}{n} \right\},
\end{eqnarray}
where $C$ is an universal positive constant, $d_2 \left( \sqrt{s}, V \right) $ is the  $\LL^2$-distance between $\sqrt{s}$ and  $V$ and  $n \eta_V^2$ is the metric dimension (in a suitable sense) of $V$.  We shall use this inequality in order to derive  risk bounds for our estimator under smoothness or structural assumptions on the target function $s$.

 In the literature, much attention has been paid to the problem of estimating the intensity of a Poisson process without covariates. Concerning estimation by model selection, \cite{Reynaud-Bouret2003} dealt with the $\LL^2$-loss, and provided a model selection theorem for a family of linear spaces~$V$. \cite{BaraudBirgeHistogramme} used the Hellinger distance and considered the case where the sets $V$ consist of piecewise constants functions on a partition of $\TT$. More general models were considered by~\cite{BirgePoisson} allowing for $V$ any subset  with finite metric dimensions (in a suitable sense).

Our statistical setting includes that of  Poisson regression. Indeed, if one observes $n$ independent random variables $Y_1,\dots,Y_n$, such that $Y_i$ obeys to a Poisson law with parameter~$f (x_i)$, one can estimate~$f$ by setting $\TT = \{0\}$, $\mu = \delta_0$ the Dirac measure on $\TT$ and $N_i (\{0\}) = Y_i$. In this case, $s(0,\cdot) = f (\cdot)$ and estimating~$s$ amounts to estimating $f$.
This last issue has been studied in~\cite{Antoniadis2001},~\cite{Antoniadis2001a},~\cite{BaraudMesure} and~\cite{Krishnamurthy2010} among other references. 
For the particular cases of Poisson regression and estimating the intensity of a single Poisson process, our results recover those of~\cite{BaraudMesure}.  

If we except these cases, statistical procedures that can estimate $s$ from $n$ independent Poisson processes with covariates are rather scarce. The only risk bounds we are aware of are due to~\cite{Comte2008} who considered  the $\LL^2$-loss and penalized projection estimators on linear spaces. Their approach requires that the intensity~$s$ be bounded from above by a quantity that needs to be either known or suitably estimated. Besides, they impose some restrictions on the  family of linear spaces $V$ in order that their estimator possesses minimax properties over classes of functions which are smooth enough.
 
Our approach is based on robust testing. We  propose a test inspired from  a variational formula in~\cite{BaraudMesure} and then apply the general methodology for model selection developed in~\cite{BirgeTEstimateurs}. This yields a $T$-estimator $\hat{s}$ that possesses nice (adaptation and  robustness) properties but suffers from the fact that its construction is numerically intractable. 
This estimator should be thus considered as a benchmark for what theoretical feasible.
We obtain an oracle inequality of the form~{(\ref{premiereineqoracle})} under very mild assumptions both on the intensity $s$ and the family of models~$\V$. This allows to derive risk bounds over a large range of Hölderian spaces including irregular ones. 

We shall also consider functions $s$ defined on a subset $\TT \times \XX$ of a  linear space with large dimension, say $\TT \times \XX = [0,1]^{1+k}$ with a large value of $k$. It is well known that in such a situation, the minimax approach based on smoothness assumptions may lead to very slow rates of convergence. This phenomenon is known as {the curse of dimensionality}.
In this case, an alternative approach is to assume that $s$ belongs to classes  $\F$ of functions satisfying structural assumptions  (such as the multiple index model, the generalized additive model, the multiplicative model \dots) and for which faster rates of convergence can be achieved. Very recently, this approach was developed by~\cite{Juditsky2009} (for the Gaussian white noise model) and  by~\cite{BaraudComposite} (in more general settings).  Unlike~\cite{Juditsky2009} we shall not assume that $s$ belongs to $\F$ but rather consider $\F$ as an approximating class for $s$.

In the present paper, our point of view is closer to that developed in~\cite{BaraudComposite}.
We shall use our new model selection theorem in conjunction with suitable families $\V$ of models in order to design an estimator $\hat{s}$ possessing good statistical properties with respect to many classes of functions of interest, including classes $\F = \F_{\times}$ of  product functions $(t,x) \mapsto u(t) v(x)$.   When $s (t,x)$ is of the form (or close to) $u(t) v(x)$, where $u$ and $v$ are assumed to be smooth we shall prove that our estimator is fully adaptive with respect to the regularities of both $u$ and~$v$. We shall also consider structural assumptions on the functions $u$ and~$v$ as well as parametric ones when $t$ and $x$ lie in a large dimensional space. 
We shall study the situation where, the intensity of each Poisson process belongs to a parametric class of functions $\F_{\Theta}$ with $\Theta \subset \R^k$. This means  that there exists some element $f_{\theta (x_i)} \in  \F_{\Theta}$ such that $s(\cdot,x_i) = f_{\theta (x_i)} (\cdot)$, and our aim is then to estimate the mapping $x \mapsto \theta(x)$ by model selection.

This paper is organized as follows. The general model selection theorem can be found in Section~2.
In Section~3, we study the case where $\F$ is a class of  smooth functions, and in Section~4  the case where $\F$ is a class of product functions. The problem of estimating $s$ when the intensity of each Poisson process~$N_i$ belongs to the same parametric model is dealt in Section~5.
Section~6 is devoted to the proofs.

Let us introduce some notations that will be used all along the paper.  We set  $\N^{\star} = \N \setminus \{0\}$,   $\R^{\star} = \R \setminus \{0\}$.  The components of a vector $\ttheta \in \R^k$ are denoted by $\ttheta = (\theta_1,\dots,\theta_k)$.
The numbers $x \wedge  y$ and $x \vee y$ stand for $\min(x,y)$ and $\max(x,y)$ respectively. 
For $(E,\mathcal{E},\nu)$ a measured space, we  denote by $\L^2 (E, \nu)$ the linear space of measurable functions~$f$ such that $\int_{E} |f|^2 \d \nu < \infty$. When $(E,\nu) = (\TT \times \XX, M)$, the corresponding $\LL^2$-distance is denoted by $d_2$, and the norm by $\|\cdot\|_2$. Alternatively, this distance (respectively this norm) is denoted by $d_{\t}$ (respectively $\|\cdot\|_{\t}$) when $(E,\nu) = (\TT, \mu)$, and by $d_{\x}$ (respectively $\|\cdot\|_{\x}$) when $(E,\nu) = (\XX, \nu_n)$. The supremum norm of a bounded function $f$ on a domain $E$ is denoted by  $\|f\|_{\infty} = \sup_{x \in E} |f(x)|$.
For $(E,d)$ a metric space, $x \in E$ and $A \subset E$, the distance between $x$ and $A$ is denoted by $d(x,A)= \inf_{a \in A} d(x,a)$. The closed ball centered at $x \in E$ with radius $r$ is denoted by $\B (x,r)$. The cardinality of a finite set $A$ is denoted by $|A|$.
We use $\F$ as a generic notation for a family of functions of $\L^2 (\TT \times \XX, M)$ of special interest.  
The notations $C$,$C'$,$C''$\dots are for constants. The constants $C$,$C'$,$C''$\dots   may change from line to line.

\section{A general model selection theorem}
Throughout this paper, a model $V$ is a subset of~$\L^2 (\TT \times \XX, M)$ with bounded metric dimension, 
in the sense of Definition~6 of \cite{BirgeTEstimateurs}.  We recall this definition below.

\begin{definition} \label{definitionmetricspace}
Let $V$ be a subset of $\L^2 (\TT \times \XX, M)$ and $D_V$ a right-continuous map from $(0,+\infty)$ into $ [1/2, + \infty)$ such that $D_V(\eta) = \smallO{\eta^2}$ when $\eta \rightarrow + \infty$. We say that $V$ has a metric dimension bounded by $D_V$ if  for all $\eta > 0$, there exists $S_V (\eta) \subset \L^2 (\TT \times \XX, M)$ such that for all $f \in \L^2 (\TT \times \XX, M)$, there exists $g \in S_V(\eta)$ with $d_2(f,g) \leq \eta$ and such that
$$\forall \varphi \in \L^2 (\TT \times \XX, M), \, \forall x \geq 2 ,\quad  |S_V (\eta) \cap \B (\varphi , x \eta)| \leq \exp \left( D_V (\eta)  \, x^2 \right).$$
Moreover, if one can choose $D_V$ as a constant, we say that $V$ has a finite metric dimension bounded by $D_V$.
\end{definition}
 This notion is more general than the dimension for linear spaces since a  linear space $V$ with finite dimension (in the usual sense) has a finite metric dimension. Besides, if $V$ is not reduced to $\{0\}$ one can choose  $D_V = \dim V$, what we shall do along this paper. The link with the classical definition of metric entropy may be found in Section~6.4.3 of~\cite{BirgeTEstimateurs}.
 Other models of interest with  bounded metric dimension  will appear later in the paper.  
 
Given a collection of such subsets, our approach is based on model selection. We propose a selection rule based on robust testing in the spirit of the papers \cite{BirgeTEstimateurs, BaraudMesure}. The test and the selection rule which are mainly abstract are postponed to Section~\ref{sectionpeuves}. 	The main result is the following.

\begin{thm}  \label{thmselectionmodelegeneral}
Let $\V$ be an at most countable family of models $V$ with bounded metric dimension $D_V (\cdot) $ and $\Delta$ be a mapping from $\V$ into $[0,+\infty)$ such that  $$\sum_{V \in \V} e^{-\Delta(V)} \leq 1.$$
There exists an estimator~$\hat{s} \in \L^1_+ (\TT \times \XX, M)$  such that, for all $\xi > 0$,
\begin{eqnarray*}\label{eqnoraclegeneralnonintegree}
\P \left[C H^2(s, \hat{s})  \geq   \inf_{V \in \V} \left\lbrace d^2_2 \left(\sqrt{s},V \right) + \eta_V^2  + \frac{  \Delta (V) }{n} \right\rbrace + \xi \right] \leq e^{-n  \xi},     
\end{eqnarray*}
where $C$ is an universal positive constant and where 
\begin{eqnarray*}\label{eqnetaV}
\eta_V = \inf \left\{ \eta > 0 , \; \frac{D_V (\eta)}{\eta^2} \leq   n \right\}. 
\end{eqnarray*}
In particular, by integrating the above inequality,
\begin{eqnarray}\label{eqnoraclegeneral}
C' \E\left[H^2(s, \hat{s}) \right]  \leq   \inf_{V \in \V} \left\lbrace d^2_2 \left(\sqrt{s},V \right) + \eta_V^2  + \frac{  \Delta (V) }{n} \right\rbrace ,     
\end{eqnarray}
where $C'$ is an universal positive constant.
\end{thm}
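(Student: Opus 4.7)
The plan is to combine a Poisson-specific pairwise robust test with the abstract $T$-estimator machinery of \cite{BirgeTEstimateurs}, following the blueprint already used in \cite{BaraudMesure} for a single Poisson process. Concretely, I would (a) associate to any pair $f, g \in \L^2(\TT \times \XX, M)$ a robust test statistic $T(N, f, g)$ antisymmetric in $(f, g)$, (b) discretize each model $V \in \V$ at scale $\eta_V$ using Definition~\ref{definitionmetricspace} to produce a countable net $S_V(\eta_V)$, (c) apply the abstract selection rule of \cite{BirgeTEstimateurs} to the union of these nets, weighted by $\Delta(V)$, to produce $\hat{s} = \hat{f}^2$.

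The heart is the construction and analysis of $T(N, f, g)$. Guided by the variational identity of \cite{BaraudMesure}, the natural candidate is a linear combination of Poisson integrals $\sum_{i=1}^n \int_\TT \psi(f, g)(t, x_i)\, \d N_i(t)$ against a suitable kernel $\psi$, minus a deterministic centering integral in $\mu$, chosen so that $\E[T(N, f, g)]$ is comparable, up to a constant, to $H^2(s, g^2) - H^2(s, f^2)$. Because each $N_i$ is Poisson, the Laplace transform of $T$ is explicit, which yields, by a Bernstein-type inequality, a deviation bound of the form
$$\P\!\left[\, |T(N, f, g) - \E T(N, f, g)| \geq \kappa\, d_2(f, g) \sqrt{x/n} + \kappa\, x/n \,\right] \leq e^{-x}$$
for some universal $\kappa > 0$ and every $x > 0$. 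Rejecting $f$ in favour of $g$ when $T(N, f, g)$ exceeds $0$ then gives a pairwise test that correctly discriminates between $f$ and $g$ on an event of probability at least $1 - e^{-c n H^2(f^2, g^2)}$ whenever $\sqrt{s}$ is closer to $f$ than to $g$.

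Once the test is in hand, the rest is an abstract union-bound argument driven by the counting bound of Definition~\ref{definitionmetricspace} and by the weights $e^{-\Delta(V)}$ summing to at most $1$. Fix an arbitrary $V \in \V$, take $f_V^{\star}$ essentially realizing $d_2(\sqrt{s}, V)$, and let $f_V \in S_V(\eta_V)$ be a net point with $d_2(f_V^{\star}, f_V) \leq \eta_V$. Plugging the deviation bound for $T$ into Birgé's $T$-estimator construction yields, on an event of probability at least $1 - e^{-n\xi}$,
$$C\, H^2(s, \hat{s}) \leq d_2^2(\sqrt{s}, f_V) + \eta_V^2 + \frac{\Delta(V)}{n} + \xi \leq d_2^2(\sqrt{s}, V) + 2 \eta_V^2 + \frac{\Delta(V)}{n} + \xi.$$
Taking the infimum over $V \in \V$ gives the stated deviation inequality, and integrating $e^{-n\xi}$ in $\xi > 0$ gives \eqref{eqnoraclegeneral}.

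The main obstacle is the pairwise test of paragraph two: one needs a robust statistic whose variance proxy matches $d_2^2(f, g)$ up to an absolute constant, while imposing no boundedness assumption on $s$, $f$ or $g$, and whose behaviour is insensitive to the fact that the $n$ Poisson processes have different deterministic intensities $s(\cdot, x_i)$. The variational identity of \cite{BaraudMesure} provides precisely the template that both avoids boundedness hypotheses and linearizes the dependence on $i$, so once it is adapted to the covariate setting the remaining steps fit the $T$-estimator framework with no further technicalities.
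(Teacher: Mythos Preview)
Your proposal is correct and follows essentially the same route as the paper's proof: construct the antisymmetric test from the variational formula of \cite{BaraudMesure}, control its deviations via a Bennett-type inequality for Poisson integrals, discretize each $V$ at scale $\eta_V$ into a $D$-model, and feed the resulting nets with weights $\Delta(V)$ into Birg\'e's $T$-estimator selection. The only point you understate is the passage from $V\subset\L^2$ to nets of genuine intensities in $\L^1_+$, which the paper handles via Proposition~12 of \cite{BirgeTEstimateurs} (projecting onto the non-negative cone at a harmless multiplicative constant); as you say, this is routine once the test is in hand.
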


The condition $\sum_{V \in \V} e^{-\Delta(V)} \leq 1$ can be interpreted as a (sub)probability on the collection~$\V$. The more complex the family $\V$, the larger the weights $\Delta(V)$.
When~$\V$ consists of linear spaces~$V$ of finite dimensions $D_V$  one can take $\eta_V^2 =  D_V /n$   and hence~{(\ref{eqnoraclegeneral})} leads to
\begin{eqnarray*}
C' \E\left[H^2(s, \hat{s}) \right]  \leq   \inf_{V \in \V} \left\lbrace  d^2_2 \left(\sqrt{s}, V \right)+  \frac{ D_V +  \Delta (V) }{n} \right\rbrace.    
\end{eqnarray*}
When one can choose $\Delta(V)$ of order $D_V$, which means that the family $\V$ of models does not contain too many models per dimension, the estimator $\hat{s}$ achieves the best trade-off  (up to a constant) between the approximation  and the variance terms.

In the remaining part of this paper, we shall consider subsets $\F \subset \LL^2 (\TT \times \XX,M)$  corresponding to various assumptions on  $\sqrt{s}$ (smoothness, structural, parametric assumptions \dots). For such an~$\F$, we  associate a collection $\V_{\F}$ and deduce from  Theorem~\ref{thmselectionmodelegeneral} a risk bound for the estimator~$\hat{s}$ whenever $\sqrt{s}$ belongs or is close to  $\F$. This bound takes the form 
\begin{eqnarray}\label{relationepsilonF}
C'' \E \left[ H^2(s,\hat{s}) \right] \leq \inf_{f \in \F} \left\{ d_2^2 (\sqrt{s}, f) + \varepsilon_{\F} (f) \right\}  
\end{eqnarray}
where $$ \varepsilon_{\F} (f) = \inf_{V \in \V_{\F}} \left\{d_2^2 (f, V) + \eta_V^2 + \frac{\Delta(V)}{n}\right\},$$
and we shall bound the term $\varepsilon_{\F} (f)$ from above.
This upper bound will mainly depend on some properties of $f$, for example smoothness ones. In this case,  this result  says that if $\sqrt{s}$ is irregular but sufficiently close to a smooth function $f$, the bound we  get essentially corresponds to the one we would get for $f$. This can be interpreted as a robustness property.

Sometimes, several assumptions on $\sqrt{s}$ are plausible, and one does not know what class $\F$ should be taken. A solution is to consider $\mathfrak{F}$ a collection of such classes $\F$ and to use the proposition below to get an estimator whose risk satisfies (up to a remaining term)  relation~{(\ref{relationepsilonF})} simultaneously for all classes  $\F \in \mathfrak{F}$.
\begin{prop} \label{propmixing}
Let $\mathfrak{F}$ be an at most countable collection of subsets of $\LL^2 (\TT \times \XX, M)$ and $\bar{\Delta}$ be a mapping on $\mathfrak{F}$ into $[0,+\infty)$ such that $\sum_{\F \in \mathfrak{F}} e^{- \bar{\Delta} (\F)} \leq 1$.
For all $\F \in \mathfrak{F}$, let $\V_{\F}$ be a collection of models and $\Delta_{\F}$ be a mapping such that the assumptions of Theorem~\ref{thmselectionmodelegeneral} hold. 

There exists an estimator $\hat{s}$ such that, for all $\F \in \mathfrak{F}$, 
\begin{eqnarray*} 
 C \E \left[ H^2(s,\hat{s}) \right] \leq \inf_{f \in \F} \left\{ d_2^2 (\sqrt{s}, f) + \varepsilon_{\F} (f) \right\} + \frac{\bar{\Delta}(\F)}{n} ,
 \end{eqnarray*}
 where $$\varepsilon_{\F} (f) = \inf_{V \in \V_{\F}} \left\{d_2^2 (f, V) + \eta_V^2 + \frac{\Delta_{\F} (V)}{n}\right\},$$
and where $C$ is an universal positive constant.
\end{prop}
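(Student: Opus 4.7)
The plan is to reduce Proposition~1.1 to Theorem~2.1 by merging all the families $\V_{\F}$ into a single collection of models carrying enlarged weights. Concretely, I would form the disjoint union
\[
\V = \bigsqcup_{\F \in \mathfrak{F}} \V_{\F},
\]
indexing each element as a pair $(V,\F)$ so that the same subset of $\LL^2(\TT\times\XX,M)$ appearing in several $\V_{\F}$ contributes once per family, and define a weight function on $\V$ by
\[
\Delta(V,\F) \;=\; \Delta_{\F}(V) + \bar{\Delta}(\F).
\]
The sub-probability condition then follows immediately by Fubini:
\[
\sum_{(V,\F)\in\V} e^{-\Delta(V,\F)} \;=\; \sum_{\F\in\mathfrak{F}} e^{-\bar{\Delta}(\F)} \sum_{V\in\V_{\F}} e^{-\Delta_{\F}(V)} \;\leq\; \sum_{\F\in\mathfrak{F}} e^{-\bar{\Delta}(\F)} \;\leq\; 1,
\]
where I used the hypothesis $\sum_{V\in\V_{\F}} e^{-\Delta_{\F}(V)}\leq 1$ coming from Theorem~\ref{thmselectionmodelegeneral} applied to each $\F$. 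Each $V$ still has metric dimension bounded by $D_V$, so the assumptions of Theorem~\ref{thmselectionmodelegeneral} are satisfied for $(\V,\Delta)$.

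Applying Theorem~\ref{thmselectionmodelegeneral} to this merged collection yields an estimator $\hat{s}$ and a universal constant $C_1>0$ with
\[
C_1\,\E\!\left[H^2(s,\hat s)\right] \;\leq\; \inf_{(V,\F)\in\V}\Bigl\{d_2^2(\sqrt{s},V)+\eta_V^2+\tfrac{\Delta_{\F}(V)+\bar{\Delta}(\F)}{n}\Bigr\}.
\]
Now I fix an arbitrary $\F\in\mathfrak{F}$ and restrict the infimum to models indexed by $\F$; the term $\bar\Delta(\F)/n$ factors out. To bring in an arbitrary $f\in\F$, I use the triangle inequality in $(\LL^2(\TT\times\XX,M),d_2)$ together with $(a+b)^2\leq 2a^2+2b^2$:
\[
d_2^2(\sqrt{s},V) \;\leq\; 2\,d_2^2(\sqrt{s},f) + 2\,d_2^2(f,V).
\]
Substituting and taking the infimum first over $V\in\V_{\F}$ and then over $f\in\F$, I obtain
\[
C_1\,\E\!\left[H^2(s,\hat s)\right] \;\leq\; 2\inf_{f\in\F}\Bigl\{d_2^2(\sqrt{s},f)+\varepsilon_{\F}(f)\Bigr\} + \tfrac{\bar{\Delta}(\F)}{n},
\]
which is the claimed bound (with $C=C_1/2$, noting that $1/(2n)\leq 1/n$ absorbs the harmless factor on $\bar\Delta(\F)$).

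There is no real obstacle here: the argument is essentially a reindexing, and the only conceptual point is choosing $\Delta(V,\F)=\Delta_{\F}(V)+\bar{\Delta}(\F)$ so that the Kraft-type sum splits as a product. The factor of $2$ introduced by the triangle inequality on $d_2$ is absorbed into the universal constant, which is why the statement keeps the same form as Theorem~\ref{thmselectionmodelegeneral} up to the additive penalty $\bar\Delta(\F)/n$ that one pays for adaptation across the family $\mathfrak{F}$.
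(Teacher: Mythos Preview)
Your argument is correct and is precisely the natural reduction to Theorem~\ref{thmselectionmodelegeneral}: one pools the collections via the disjoint union $\V=\bigsqcup_{\F}\V_{\F}$, assigns the additive weight $\Delta(V,\F)=\Delta_{\F}(V)+\bar\Delta(\F)$ so that the Kraft condition factorises, applies the theorem, and then inserts an arbitrary $f\in\F$ through the triangle inequality at the cost of a harmless factor~$2$. The paper itself does not spell out a proof of this proposition (it is stated right after Theorem~\ref{thmselectionmodelegeneral} and left as an immediate consequence), but your derivation is exactly the intended one and there is nothing to add.
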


\section{Smoothness assumptions} \label{sectionhypclassiques}
Let  $\mathbf{I} = \prod_{j=1}^k I_j$ where the $I_j$ are intervals of $\R$  and  $\boldsymbol{\alpha} = \boldsymbol{\beta} + \mathbf{p}  \in (0,+\infty)^k$ with  $\mathbf{p} \in \N^k$ and  $\boldsymbol{\beta} \in (0,1]^k$.  A function $f$ belongs to the Hölder class $\mathcal{H}^{\boldsymbol{\alpha}} (\I)$, if there exists $L(f) \in [0,+\infty)$ such that for all $(x_1,\dots,x_k) \in \I$ and all $j \in \{1,\dots,k\}$, the functions $f_j (x) = f (x_1,\dots,x_{j-1},x,x_{j+1},\dots,x_k)$ admit a derivative of order ${p}_j$ satisfying
$$ \left| f_j^{({p}_j)} (x) - f_j^{({p}_j)}(y) \right| \leq L(f) |x - y|^{{\beta}_j} \quad  \forall x,y \in I_j.$$
The  class $\mathcal{H}^{\boldsymbol{\alpha}} (\I)$ is said to be  isotropic when the $\alpha_j$ are all equal,  and   anisotropic otherwise, in which case $\bar{\boldsymbol{\alpha}}$ given by $\bar{\boldsymbol{\alpha}}^{-1} = k^{-1} \sum_{j=1}^k \alpha_j^{-1}$ corresponds to the average smoothness of a function~$f$ in $\mathcal{H}^{\boldsymbol{\alpha}} (\I)$. We define the class of  Hölderian functions on~$\I$ by
  $$\mathcal{H}  \left(\I \right) = \bigcup_{\boldsymbol{\alpha}  \in (0,+\infty)^k } \mathcal{H}^{\boldsymbol{\alpha}} \left(\I \right).$$
Assuming that $\sqrt{s}$ is Hölderian corresponds thus to the choice $\mathscr{F} = \mathcal{H}  \left(\TT \times \XX \right)$.
Anisotropic  classes of smoothness are of particular interest in our context since the function $s$ depends on variables $t$ and $x$ that may play very different roles.

Families of linear spaces possessing good approximation properties with respect to the elements of $\F$  can be found in the literature. We refer to the results of~\cite{Dahmen1980}.
We may use these linear spaces (models) to approximate the elements of $\F$, and deduce from Theorem~\ref{thmselectionmodelegeneral} the following result.

\begin{cor}\label{coorminimiaxrate2}
Let us assume that  $\TT \times \XX = [0,1]^{k}$ and that $\mu$ is the Lebesgue measure.
There exists an estimator $\hat{s}$ such that for all $f \in \mathcal{H}  \left( [0,1]^k \right)$,  
\begin{eqnarray} \label{relationminimaxrate}
C  \E\left[ H^2(s,\hat{s})  \right]  \leq   d_2^2 (\sqrt{s} , f )  +   L \left( f \right)^{\frac{2 k}{2 \bar{\boldsymbol{\alpha}}+k}}   n^{- \frac{2 \bar{\boldsymbol{\alpha}}}{2 \bar{\boldsymbol{\alpha}} + k}} + n^{-1}
\end{eqnarray}
where $\boldsymbol{\alpha} \in (0,+\infty)^k$ is such that $f \in \mathcal{H}^{\boldsymbol{\alpha}} ([0,1]^k)$ and where $C > 0$ depends only on $k$ and $\max_{1 \leq j \leq k} \alpha_j $.
\end{cor}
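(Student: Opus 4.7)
The plan is to apply Theorem~\ref{thmselectionmodelegeneral} with a rich collection $\V$ of finite-dimensional linear models built from anisotropic piecewise-polynomial spaces on $[0,1]^k$. For each $\mathbf{m}=(m_1,\dots,m_k)\in(\N^\star)^k$ and $\mathbf{r}=(r_1,\dots,r_k)\in\N^k$, I would take $V_{\mathbf{m},\mathbf{r}}$ to consist of all functions that are polynomial of partial degree at most $r_j$ in direction $j$ on each cell of the regular product partition of $[0,1]^k$ into $\prod_j m_j$ boxes. Since $V_{\mathbf{m},\mathbf{r}}$ is a linear space of dimension $D_{\mathbf{m},\mathbf{r}}=\prod_j m_j(r_j+1)$, the remark following Definition~\ref{definitionmetricspace} lets me take its metric dimension to be $D_{\mathbf{m},\mathbf{r}}$, so $\eta^2_{V_{\mathbf{m},\mathbf{r}}}\le D_{\mathbf{m},\mathbf{r}}/n$. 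I would then attach weights $\Delta(V_{\mathbf{m},\mathbf{r}})$ of logarithmic order in $(\mathbf{m},\mathbf{r})$, tuned so that $\sum_{V\in\V}e^{-\Delta(V)}\le 1$ while still $\Delta(V_{\mathbf{m},\mathbf{r}})=o(D_{\mathbf{m},\mathbf{r}})$.

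Next, for a given $f\in\mathcal{H}^{\boldsymbol{\alpha}}([0,1]^k)$, I would invoke the anisotropic polynomial-approximation results of Dahmen--DeVore--Scherer cited just above the corollary: provided $r_j\ge p_j$, there exists $g\in V_{\mathbf{m},\mathbf{r}}$ with $\|f-g\|_\infty\le C_1(\boldsymbol{\alpha})L(f)\sum_{j=1}^k m_j^{-\alpha_j}$. Because $M=\mu\otimes\nu_n$ has total mass $1$ on $[0,1]^k$, the $d_2$-distance is dominated by the sup-norm, giving the same bound for $d_2(f,V_{\mathbf{m},\mathbf{r}})$. Combining the triangle inequality $d_2^2(\sqrt s,V)\le 2d_2^2(\sqrt s,f)+2d_2^2(f,V)$ with Theorem~\ref{thmselectionmodelegeneral} (and fixing $r_j=p_j$, which costs only a multiplicative constant depending on $\max_j\alpha_j$) yields
\begin{equation*}
C\,\E[H^2(s,\hat s)]\;\le\;d_2^2(\sqrt s,f)+\inf_{\mathbf{m}\in(\N^\star)^k}\left\{L(f)^2\Big(\sum_{j=1}^k m_j^{-\alpha_j}\Big)^2+\frac{\prod_j m_j}{n}\right\}+\frac{1}{n}.
\end{equation*}

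To conclude I balance the bracketed quantity. Setting $m_j=\lceil m^{\bar{\boldsymbol{\alpha}}/\alpha_j}\rceil$ for a scalar $m\ge 1$, the identity $\sum_j\bar{\boldsymbol{\alpha}}/\alpha_j=k$ gives $\prod_j m_j\le C_2\,m^k$, while $m_j^{\alpha_j}\ge m^{\bar{\boldsymbol{\alpha}}}$ gives $\sum_j m_j^{-\alpha_j}\le k\,m^{-\bar{\boldsymbol{\alpha}}}$. The bracket is then bounded by a constant times $L(f)^2 m^{-2\bar{\boldsymbol{\alpha}}}+m^k/n$, and the choice $m\sim(L(f)^2 n)^{1/(2\bar{\boldsymbol{\alpha}}+k)}$ produces the announced rate $L(f)^{2k/(2\bar{\boldsymbol{\alpha}}+k)}n^{-2\bar{\boldsymbol{\alpha}}/(2\bar{\boldsymbol{\alpha}}+k)}$; if this optimum falls below $1$, the whole term is absorbed into the residual $1/n$.

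The main technical obstacle is the simultaneous adaptation over the anisotropy $\boldsymbol{\alpha}$ and the scales $(\mathbf{m},\mathbf{r})$: the collection $\V$ must be rich enough to contain a near-optimal model for every $\boldsymbol{\alpha}$, yet the weights $\Delta(V)$ must remain both summable and negligible against $D_{\mathbf{m},\mathbf{r}}$ so as not to pollute the variance term. The logarithmic choice above accomplishes both because $\sum_j\log(1+m_j)(1+r_j)$ is dwarfed by $\prod_j m_j(r_j+1)$. The only remaining subtleties—rounding $m^{\bar{\boldsymbol{\alpha}}/\alpha_j}$ to integers and allowing $r_j$ to depend on $\boldsymbol{\alpha}$—cost only constants depending on $k$ and $\max_j\alpha_j$, as claimed.
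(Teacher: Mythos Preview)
Your proposal is correct and follows exactly the approach the paper indicates: the paper does not write out a detailed proof of this corollary but simply says to combine Theorem~\ref{thmselectionmodelegeneral} with the anisotropic piecewise-polynomial approximation spaces of Dahmen--DeVore--Scherer, which is precisely the construction and bias--variance balancing you carry out. Your treatment of the weights, the triangle inequality for $d_2(\sqrt s,V)$, and the anisotropic scaling $m_j\sim m^{\bar{\boldsymbol\alpha}/\alpha_j}$ are all standard and correct, and the dependence of the constants on $k$ and $\max_j\alpha_j$ arises exactly where you say it does (through the polynomial degrees $r_j=p_j$ and the approximation constant).
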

Remark that the risk bound given by inequality~{(\ref{relationminimaxrate})} holds without any restriction on~$\boldsymbol{\alpha}$. Such a generality can be obtained since our model selection theorem is valid for any collection $\V$ of finite dimensional linear spaces. Some restrictions on the dimensionality of the linear spaces $V \in \V$ (as in~\cite{Comte2008})  would prevent us to get this rate of convergence for the Hölder classes $\mathcal{H}^{\boldsymbol{\alpha}}  \left( [0,1]^k \right)$ when $\min_{1 \leq j \leq k} \alpha_j$ is too small.
 
The preceding risk bound is quite satisfactory if $k$ is small but becomes worse when $k$ increases. We shall therefore consider other types of classes in the next section in order to avoid this {curse of dimensionality}.

\section{Families $\F$ of product functions} \label{multiplicativetimeindepent}
A common way of modelling  the  influence of the covariates on the number of failures of $n$ systems is to assume that, for each  $i \in \{1,\dots,n\}$, the intensity of~$N_i$, is of the form $s  (t,x_i) =  u(t) v(x_i)$ where $u$ is an unknown density function on $\TT$, and $v$ some unknown function from $\XX$ into $[0,+\infty)$. This means, that in average, the number of failures of system~$i$,  $\E[N_i(\TT)] = v(x_i)$, depends on~$x_i$ through $v$ only, and conditionally to $N_i(\TT) = k_i > 0$, the times of failure are distributed along $\TT$ independently of $x_i$, but accordingly to the density $u$.

We shall therefore consider the class $\F$ defined by
\begin{eqnarray}\label{Fmultiplicativeclass}
\F = \left\{\kappa  v_1  v_2 , \, \kappa \geq 0 , \, (v_1,v_2) \in \L^2 (\TT,\mu) \times \L^2 (\XX,\nu_n) , \, \|v_1\|_{\t} = \|v_2\|_{\x}  = 1\right\} ,
\end{eqnarray}
which amounts to assuming that $s$ is of the form (or close to) a product function $u(t) v(x)$ with  $u = v_1^2$ and $v = \kappa^2 v_2^2$.

In this section, we introduce collections of models $\V_1$ and $\V_2$ in order to approximate  the components~$v_1$ and $v_2$ separately. Given $V_1 \in \V_1$ to approximate~$v_1$ and $V_2 \in \V_2$ to approximate~$v_2$, we approximate $v_1 v_2$ by the model  $V_1 \otimes V_2$ defined by
 \begin{eqnarray}\label{Vmultiplicatif}
 V_1 \otimes V_2 = \left\{ v_1' v_2' , \, (v_1',v_2') \in V_{1} \times V_{2} \right\}.
\end{eqnarray}
The metric dimension of $V_1 \otimes V_2$ is controlled as follows.
\begin{lemme} \label{lemmeproduitlinear}
Let $V_1$ and $V_2$ be a finite dimensional linear space of $\LL^2(\TT,\mu)$ and $\LL^2(\XX, \nu_n)$ respectively. The set $ V_1 \otimes V_2$ defined by~{(\ref{Vmultiplicatif})} has a finite metric dimension bounded by
$$D_{ V_1 \otimes V_2} = 1.4 \, (\dim V_{1} + \dim V_{2} + 1).$$
\end{lemme}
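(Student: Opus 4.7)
The plan is to construct an explicit $\eta$-discretization $S_V(\eta)$ of $V := V_1 \otimes V_2$ by combining finite nets on the unit spheres of $V_1$ and $V_2$ with a scalar discretization, and then to bound $|S_V(\eta) \cap \B(\varphi, x\eta)|$ using a bi-Lipschitz property of the product map. Throughout I write $d_j = \dim V_j$.

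First, every non-zero $g \in V$ admits a representation $g = c f_1 f_2$ with $c > 0$, $\|f_1\|_{\t} = \|f_2\|_{\x} = 1$, unique up to simultaneous sign flip. Fubini yields $\|g\|_2 = c$ and, more generally, the bilinear triangle inequality $\|c f_1 f_2 - c' h_1 h_2\|_2 \leq |c - c'| + c' (\|f_1 - h_1\|_{\t} + \|f_2 - h_2\|_{\x})$. To build $S_V(\eta)$, I would discretize $c$ on a uniform grid of step $\eta/3$ in $[0,+\infty)$ and, for each positive grid value $c$, take an $(\eta/(3c))$-net $N_j^{(c)}$ of the unit sphere of $V_j$, which by the standard volumetric bound has cardinality at most $(1 + 6c/\eta)^{d_j}$. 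Setting $S_V(\eta) = \{0\} \cup \{c h_1 h_2 : c \text{ on the grid}, h_j \in N_j^{(c)}\}$, the bilinear inequality yields $d_2(g, S_V(\eta)) \leq \eta$ for every $g \in V$.

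The crux is a bi-Lipschitz estimate for the product map on spheres: if $\|f_j\|_j = \|h_j\|_j = 1$ and $\epsilon := \|h_1 h_2 - f_1 f_2\|_2$, then after adjusting signs one has $\|h_j - f_j\|_j \leq \epsilon$ for each $j$. This follows from the identity $\epsilon^2 = 2 - 2 \langle h_1, f_1\rangle_{\t} \langle h_2, f_2 \rangle_{\x}$ (again by Fubini), combined with $a + b \geq 2\sqrt{ab}$ and $\sqrt{1-u} \geq 1 - u$ to obtain $(1 - \langle h_j, f_j\rangle_j) \leq \epsilon^2/2$ and hence $\|h_j - f_j\|_j^2 = 2(1 - \langle h_j, f_j\rangle_j) \leq \epsilon^2$. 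Given $\varphi \in \LL^2(\TT \times \XX, M)$ and $x \geq 2$, I may assume $S_V(\eta) \cap \B(\varphi, x\eta)$ contains a reference point $c^* h_1^* h_2^*$; any other grid point $c h_1 h_2$ in the ball then lies within $2 x \eta$ of the reference, which forces $|c - c^*| \leq 2 x \eta$ (at most $O(x)$ admissible scalar values) and, via the bi-Lipschitz estimate, $\|h_j - (\pm h_j^*)\|_j \leq 2 x \eta / c$, so that a volumetric count in the linear space $V_j$ leaves at most $(\mathrm{const} \cdot x)^{d_j}$ net points of $N_j^{(c)}$ in this ball.

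Multiplying the three factors (plus a constant for sign ambiguity) gives $|S_V(\eta) \cap \B(\varphi, x\eta)| \leq (\mathrm{const} \cdot x)^{d_1 + d_2 + 1}$, and a routine check that the absorbed constants are modest enough to ensure $\ln(\mathrm{const} \cdot x) \leq 1.4\, x^2$ uniformly for $x \geq 2$ (the critical case being $x = 2$) produces the announced bound $D_V = 1.4(d_1 + d_2 + 1)$. The main obstacle is the bi-Lipschitz estimate: without it, one could only control $V$ through the ambient algebraic tensor-product space of dimension $d_1 d_2$, yielding a much worse multiplicative constant; it is precisely the decoupling of the joint constraint $h_1 h_2 \approx h_1^* h_2^*$ into independent sphere constraints on $h_1$ and $h_2$ that produces the additive rate $d_1 + d_2$.
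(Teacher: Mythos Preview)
Your overall strategy coincides with the paper's: build the net from a scalar grid times sphere-nets on $V_1,V_2$, and count points in a ball by decoupling the product constraint into separate constraints on the two spherical factors. The decoupling step, however, has a genuine gap.

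Your bi-Lipschitz claim is that, with $\epsilon^2 = 2 - 2\langle h_1,f_1\rangle_{\t}\langle h_2,f_2\rangle_{\x}$, one can sign-adjust so that $\|h_j - f_j\|_j \le \epsilon$ for both $j$. The only freedom in the representation $c f_1 f_2$ is the \emph{simultaneous} flip $(f_1,f_2)\mapsto(-f_1,-f_2)$, which does not change the sign of the product $\langle h_1,f_1\rangle_{\t}\langle h_2,f_2\rangle_{\x}$. When this product is negative the claimed inequality fails: e.g.\ $a:=\langle h_1,f_1\rangle_{\t}=1/2$, $b:=\langle h_2,f_2\rangle_{\x}=-1/2$ give $\epsilon^2=5/2$ but $\|h_2-f_2\|_{\x}^2=3$, and flipping both signs just swaps the roles of $j=1$ and $j=2$. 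Your appeal to $a+b\ge 2\sqrt{ab}$ already presumes $a,b\ge 0$. In the scaled application you also write $2x\eta/c$ where the correct denominator is $\sqrt{cc^*}$, though this is a minor constant issue.

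The paper fixes the opposite-sign case by a separate, elementary argument: if $\langle h_1,h_1^*\rangle_{\t}\langle h_2,h_2^*\rangle_{\x}<0$ then the cross term in $\|c h_1 h_2 - c^* h_1^* h_2^*\|_2^2$ is nonnegative, so this squared distance is at least $c^2+(c^*)^2$. Combined with the lower bound $c,c^*\ge \eta/\sqrt{2}$ from the grid and the trivial bound $\|h_j-h_j^*\|_j\le 2$, one still confines $h_j$ to a ball of radius $O(x^2)$ times the mesh of $N_j^{(c)}$. This is why the paper's final count involves $(48x^2+1)^{d_1+d_2}$ rather than a pure $(Cx)^{d_1+d_2}$; the check $48x^2+1\le e^{1.4x^2}$ for $x\ge 2$ is what produces the constant $1.4$. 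Once you add this case split, your argument and the paper's are essentially identical.
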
 
By using  Theorem~\ref{thmselectionmodelegeneral}, we prove the following result. 

\begin{prop}  \label{propmultiplicativemodels}
Let $\V_{1}$ (respectively $\V_{2}$)  be an at most countable collection of finite dimensional linear spaces of $\L^2 (\TT, \mu)$  (respectively $\L^2 (\XX, \nu_n)$). Let, for all $i \in \{1,2\}$,  $\Delta_i$  be a non-negative mapping on~$\V_i$ such that   
 $$\sum_{ V_i \in \V_{_i}} e^{- \Delta_i (V_i)} \leq 1.$$
There exists an estimator $\hat{s}$ such that, for all $\kappa v_1 v_2 \in \F$, where $\F$ is defined by~{(\ref{Fmultiplicativeclass})}, 
\begin{eqnarray*} \label{inegaliteoraclemultiplicative}
C \E\left[ H^2(s,\hat{s}) \right] & \leq&   d_2^2 (\sqrt{s} , \kappa v_1 v_2 ) + \inf_{V_{1} \in \V_{1} }  \left\lbrace \kappa^2  d_{\t}^2 (v_1, V_{1}) + \frac{\dim V_{1} \vee 1 + \Delta_1 (V_{1})  }{n}\right\rbrace \nonumber \\
& & \quad + \inf_{V_{2} \in \V_{2} } \left\lbrace \kappa^2  d_{\x}^2 (v_2, V_{2}) + \frac{\dim V_{2} \vee 1 + \Delta_2 (V_{2})  }{n}\right\rbrace
\end{eqnarray*}
where $C$ is an universal positive contant. Furthermore, $\sqrt{\hat{s}}$ belongs to $\F$.
\end{prop}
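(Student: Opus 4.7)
The strategy is to apply Theorem~\ref{thmselectionmodelegeneral} to the tensor-product family $\V_{\F} = \{V_1 \otimes V_2 : (V_1, V_2) \in \V_1 \times \V_2\}$ equipped with the weights $\Delta(V_1 \otimes V_2) = \Delta_1(V_1) + \Delta_2(V_2)$. The Kraft-type condition factorizes:
\[
\sum_{(V_1, V_2)} e^{-\Delta_1(V_1) - \Delta_2(V_2)} = \Bigl(\sum_{V_1 \in \V_1} e^{-\Delta_1(V_1)}\Bigr)\Bigl(\sum_{V_2 \in \V_2} e^{-\Delta_2(V_2)}\Bigr) \leq 1.
\]
By Lemma~\ref{lemmeproduitlinear} each model has a finite metric dimension $D = 1.4(\dim V_1 + \dim V_2 + 1)$, so $\eta_{V_1 \otimes V_2}^2 = D/n$. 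Theorem~\ref{thmselectionmodelegeneral} then produces an estimator $\hat s$ whose Hellinger risk is controlled, up to a universal constant, by $\inf_{V_1, V_2}\{d_2^2(\sqrt s, V_1 \otimes V_2) + D/n + (\Delta_1(V_1) + \Delta_2(V_2))/n\}$.

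The key step is to translate $d_2^2(\sqrt s, V_1 \otimes V_2)$ into the separate approximation errors of $v_1$ and $v_2$. Fix $\kappa v_1 v_2 \in \F$ and let $v_i'$ denote the $\L^2$-orthogonal projection of $v_i$ onto $V_i$; since $V_1, V_2$ are linear, $\kappa v_1' v_2' \in V_1 \otimes V_2$. The triangle inequality yields
\[
d_2^2(\sqrt s, V_1 \otimes V_2) \leq 2\, d_2^2(\sqrt s, \kappa v_1 v_2) + 2\kappa^2 \|v_1 v_2 - v_1' v_2'\|_2^2.
\]
Writing $v_1 v_2 - v_1' v_2' = (v_1 - v_1')v_2 + v_1'(v_2 - v_2')$ and using that $M = \mu \otimes \nu_n$ factorizes the $\L^2$-norms,
\[
\|(v_1 - v_1')v_2\|_2 = \|v_1 - v_1'\|_\t \|v_2\|_\x = d_\t(v_1, V_1), \qquad \|v_1'(v_2 - v_2')\|_2 = \|v_1'\|_\t \|v_2 - v_2'\|_\x \leq d_\x(v_2, V_2),
\]
where the last inequality uses $\|v_1'\|_\t \leq \|v_1\|_\t = 1$. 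The contractivity of the orthogonal projection is the single genuinely non-routine point and the reason why one cannot use an arbitrary approximant of $v_1$ in $V_1$. Squaring then gives $\|v_1 v_2 - v_1' v_2'\|_2^2 \leq 2(d_\t^2(v_1, V_1) + d_\x^2(v_2, V_2))$.

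Inserting this bound into the oracle inequality, the joint infimum splits via $\inf_{V_1, V_2}\{A(V_1) + B(V_2)\} = \inf_{V_1} A(V_1) + \inf_{V_2} B(V_2)$ into separate infima over $\V_1$ and $\V_2$; the numerical constants are absorbed into the universal constant, and the additive $+1$ in $D$ converts $\dim V_i$ into $\dim V_i \vee 1$. Finally $\sqrt{\hat s} \in \F$: by construction the T-estimator from Theorem~\ref{thmselectionmodelegeneral} takes values in a discretization of $\bigcup_{V_1, V_2} V_1 \otimes V_2$, so $\sqrt{\hat s}$ equals $|v_1' v_2'|$ for some $v_i' \in V_i$, and the right-hand side $|v_1'|\,|v_2'|$ is of the form $\kappa \tilde v_1 \tilde v_2$ with $\tilde v_i = |v_i'|/\||v_i'|\|$ and $\kappa = \|v_1'\|_\t \|v_2'\|_\x \geq 0$ (taking $\kappa = 0$ if $v_1' v_2' = 0$), witnessing membership in $\F$.
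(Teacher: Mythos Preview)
Your proof of the oracle inequality is correct and follows the paper's overall strategy (tensor-product models plus Theorem~\ref{thmselectionmodelegeneral}), but you bound $\kappa^2\|v_1v_2-v_1'v_2'\|_2^2$ by a different, more elementary route. The paper takes $v_1',v_2'$ on the unit spheres $S_1,S_2$ of $V_1,V_2$, invokes the exact identity of Lemma~\ref{calculdehellinger} to obtain $d_2^2(v_1v_2,v_1'v_2')\le d_\t^2(v_1,v_1')+d_\x^2(v_2,v_2')$, and then needs a separate step showing $d_\t(v_1,S_1)\le 2\,d_\t(v_1,V_1)$ (and similarly for $v_2$) to pass from the sphere to the subspace. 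Your algebraic split $(v_1-v_1')v_2+v_1'(v_2-v_2')$ together with the contractivity of the orthogonal projection gets there in one stroke, without Lemma~\ref{calculdehellinger} and without the sphere-to-subspace comparison. The paper's detour is not wasted, however: Lemma~\ref{calculdehellinger} is the workhorse of the metric-dimension bound (Lemma~\ref{lemmeproduitlinear}), so it is already available.

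There is a small gap in your last paragraph. The $D$-model construction behind Theorem~\ref{thmselectionmodelegeneral} (Lemma~\ref{lemmaDModels}) produces $\sqrt{\hat s}=g\vee 0$ with $g\in V_1\otimes V_2$, i.e.\ $\sqrt{\hat s}=(v_1'v_2')_+$, not $|v_1'v_2'|$. The positive part of a product function is in general \emph{not} a product function (take $v_1',v_2'$ that change sign), so your final sentence does not establish $\sqrt{\hat s}\in\F$ as written. The paper's proof does not address this point either. A clean fix is to replace the retraction $f\mapsto f\vee 0$ in Lemma~\ref{lemmaDModels} by $f\mapsto|f|$, which is equally a $1$-Lipschitz map onto the nonnegative cone; then $\sqrt{\hat s}=|v_1'v_2'|=|v_1'|\,|v_2'|$ and your normalization argument goes through.
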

Apart for the term $d_2^2 (\sqrt{s} , \kappa v_1 v_2 )$ which corresponds to some robustness with respect to the assumption $\sqrt{s} \in \F$, the risk bound we get corresponds to the one we would get if we could apply a model selection theorem on the components~$v_{1}$ and $v_{2}$ separately.

\subsection{Smoothness assumptions on $v_1$ and $v_2$.}
We illustrate this proposition by setting $\TT = [0,1]^{k_1}$, $\XX = [0,1]^{k_2}$, $\mu$ the Lebesgue measure and
\begin{eqnarray}  \label{FfonctionsproduitlinearHolder}
 \F =  \left\{\kappa v_1 v_2 ,  \kappa \geq 0 ,  v_1  \in \mathcal{H} ([0,1]^{k_1}) , \|v_1\|_{\t}  = 1 ,  v_2  \in \mathcal{H} ([0,1]^{k_2}) , \|v_2\|_{\x} = 1  \right\}.
\end{eqnarray}
We apply Proposition~\ref{propmultiplicativemodels} with families $\V_1$ and $\V_2$ of linear spaces possessing good approximation properties with respect to the functions of $\mathcal{H} ([0,1]^{k_1})$ and $\mathcal{H} ([0,1]^{k_2})$ respectively. This leads to  the following corollary.

\begin{cor}  \label{rateholdermultiplicative}
There exists an estimator $\hat{s}$ such that, for all $\kappa v_1 v_2 \in \F$, where $\F$ is defined  by (\ref{FfonctionsproduitlinearHolder}),  
\begin{eqnarray*}
C  \E\left[ H^2(s,\hat{s})  \right]  &\leq& d_2^2 ( \sqrt{s} , \kappa  v_1 v_2 )  +    \kappa^{\frac{2 k_1}{2 \bar{\boldsymbol{\alpha}}+k_1}}   L \left(v_1 \right)^{\frac{2 k_1}{2 \bar{\boldsymbol{\alpha}}+k_1}}   n^{- \frac{2 \bar{\boldsymbol{\alpha}}}{2 \bar{\boldsymbol{\alpha}} + k_1}}  \\
& & \qquad +   \kappa^{\frac{2 k_2}{2 \bar{\boldsymbol{\beta}}+k_2}}   L \left(v_2 \right)^{\frac{2 k_2}{2 \bar{\boldsymbol{\beta}}+k_2}}   n^{- \frac{2 \bar{\boldsymbol{\beta}}}{2 \bar{\boldsymbol{\beta}} + k_2}} + n^{-1}
\end{eqnarray*}
where $\boldsymbol{\alpha} \in (0,+\infty)^{k_1}$,  is such that $v_1  \in \mathcal{H}^{\boldsymbol{\alpha}} ([0,1]^{k_1})$, where $\boldsymbol{\beta} \in (0,+\infty)^{k_2}$   is such that $v_2  \in \mathcal{H}^{\boldsymbol{\beta}} ([0,1]^{k_2})$,
and where $C > 0$ depends only on $k_1$, $k_2$, $\max_{1 \leq j \leq k_1} \alpha_i$, and $\max_{1 \leq j \leq k_2} \beta_i$. 
\end{cor}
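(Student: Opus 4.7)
The plan is to apply Proposition~\ref{propmultiplicativemodels} with collections $\V_1$ and $\V_2$ of linear spaces specifically tailored to approximate Hölderian functions on $[0,1]^{k_1}$ and $[0,1]^{k_2}$. Concretely, I would borrow the approximation results of~\cite{Dahmen1980} (already invoked in Section~\ref{sectionhypclassiques}) to produce, for $i\in\{1,2\}$, a countable family $\V_i = \bigcup_{D\geq 1} \V_i^{(D)}$ where $\V_i^{(D)}$ consists of a small (polynomial in $D$) number of linear spaces of dimension $D$, satisfying: for every $\boldsymbol{\alpha}\in(0,+\infty)^{k_i}$ and every $f\in\mathcal{H}^{\boldsymbol{\alpha}}([0,1]^{k_i})$ with $\|f\|_{\t}$ (or $\|f\|_{\x}$) $=1$, there is $V\in\V_i^{(D)}$ with $d(f,V) \lesssim L(f)\,D^{-\bar{\boldsymbol{\alpha}}/k_i}$, the constant depending only on $k_i$ and $\max_j \alpha_j$.

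I would then take $\Delta_i(V) = c\dim V$ for some $c$ large enough that $\sum_{V\in\V_i} e^{-\Delta_i(V)} \leq 1$ (the polynomial cardinality of $\V_i^{(D)}$ is absorbed in the exponential). Plugging these into Proposition~\ref{propmultiplicativemodels} gives, for every $\kappa v_1 v_2 \in\F$,
\begin{eqnarray*}
C\,\E[H^2(s,\hat{s})] &\leq& d_2^2(\sqrt{s},\kappa v_1 v_2) + \inf_{D_1\geq 1}\left\{\kappa^2 L(v_1)^2 D_1^{-2\bar{\boldsymbol{\alpha}}/k_1} + \frac{D_1}{n}\right\} \\
& & \qquad +\inf_{D_2\geq 1}\left\{\kappa^2 L(v_2)^2 D_2^{-2\bar{\boldsymbol{\beta}}/k_2} + \frac{D_2}{n}\right\}.
\end{eqnarray*}

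The third step is a routine bias--variance optimization: balancing the two terms in each infimum at $D_1 \sim (\kappa^2 L(v_1)^2 n)^{k_1/(2\bar{\boldsymbol{\alpha}}+k_1)}$ and $D_2 \sim (\kappa^2 L(v_2)^2 n)^{k_2/(2\bar{\boldsymbol{\beta}}+k_2)}$ yields the two rate terms announced in the corollary. The residual $n^{-1}$ accommodates the boundary case where the optimal $D_i$ would drop below $1$ (e.g.\ when $\kappa$ or $L(v_i)$ is very small), in which case one simply picks $D_i=1$ and the variance term is at most $1/n$.

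The only genuine obstacle is to verify that the approximation result of~\cite{Dahmen1980}, originally stated for $\LL^2([0,1]^k)$ with Lebesgue measure, can also be used on $\XX=[0,1]^{k_2}$ equipped with the empirical measure $\nu_n$. Since one approximates $v_2$ on the finite set $\{x_1,\dots,x_n\}$, the linear spaces spanned by the Dahmen basis restricted to those points inherit at least as good an approximation quality (every Lebesgue-$\LL^2$ bound transfers to $\nu_n$ when the approximant is continuous and sup-normed, using the uniform bound on a Hölder function). Handling this carefully (and confirming that $\dim V$ does not grow when restricted to the sample) is the part requiring the most attention; everything else is a direct assembly of Proposition~\ref{propmultiplicativemodels} and the classical Hölder approximation rate.
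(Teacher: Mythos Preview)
Your proposal is correct and follows exactly the route the paper indicates: apply Proposition~\ref{propmultiplicativemodels} with collections $\V_1,\V_2$ of linear spaces having good approximation properties for anisotropic H\"older classes (the Dahmen spaces already invoked in Section~\ref{sectionhypclassiques}), choose weights $\Delta_i$ proportional to the dimension, and balance bias against variance. The paper gives no further detail beyond that sentence, so your write-up is in fact more explicit than the original; your caution about transferring the approximation bound to the empirical measure $\nu_n$ is well placed and your resolution via sup-norm control is the standard and correct one.
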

In particular, if $s$ is a product function of the form $\sqrt{s} = \kappa v_1 v_2 $ where $v_1 \in  \mathcal{H}^{\boldsymbol{\alpha}} ([0,1]^{k_1})$, and $v_2  \in \mathcal{H}^{\boldsymbol{\beta}} ([0,1]^{k_2})$, $\sqrt{s}$ is Hölderian with regularity $(\boldsymbol{\alpha}, \boldsymbol{\beta})$ on $[0,1]^{k_1+k_2}$.
However, the rate given by the corollary above is always faster than the one we would get by  Corollary~\ref{coorminimiaxrate2}  under smoothness assumption only.


\subsection{Mixing smoothness and structural assumptions.}
When $k_2$ is large, we  may consider structural assumptions on $v_2$ instead of smoothness ones to improve the risk bound. Proposition~\ref{propmultiplicativemodels} allows to consider a wide variety of situations thanks to the approximation results of~\cite{BaraudComposite} on composite functions. 
We do not present all of them for the sake of concisely. We just consider the example in which the class $\F$ is
\begin{eqnarray} \label{Fmultiplemodelindexmodel}
 \F  &=& \left\{  \kappa  v_1  v_2  , \,  \kappa \geq 0 , \, v_1 \in \mathcal{H} ([0,1]^{k_1}) , \, \ttheta_1,\dots,\ttheta_l \in \B(0,1)  ,  g \in \mathcal{H} ( \left[-1, 1 \right]^l )   , \right.  \\
& &  \left.  \forall \boldsymbol{x}  \in \XX ,\, v_2(\boldsymbol{x} ) = g \left( <\ttheta_1, \boldsymbol{x} >, \dots, <\ttheta_l,\boldsymbol{x}  >  \right)   , \, \|v_1\|_{\t} = \|v_2\|_{\x} = 1 \right\} \nonumber
\end{eqnarray}
where we have chosen $\TT = [0,1]^{k_1}$,  $\mu$  the Lebesgue measure and $$\XX = \B(0,1) = \left\{\boldsymbol{x} \in \R^{k_2}, \, \sum_{j=1}^{k_2} x_j^2 \leq 1 \right\}$$ the unit ball of $\R^{k_2}$.
The following corollary ensues from Proposition~\ref{propmultiplicativemodels} and Corollary~2 of~\cite{BaraudComposite}.
 
\begin{cor} \label{corsinglemodelratemultiplicatif}
There exists an estimator $\hat{s}$ such that, for all $\kappa  v_1 v_2 \in~\F$, where $\F$ is defined by~{(\ref{Fmultiplemodelindexmodel})},
\begin{eqnarray*}
C \E\left[ H^2\left(s,\hat{s} \right)  \right]  &\leq&   d_2^2 (\sqrt{s},   \kappa  v_1  v_2 )   +  \kappa^{\frac{2 k_1 } {2 \bar{\boldsymbol{\alpha}}+ k_1}} L(v_1)^{\frac{2 k_1 } {2 \bar{\boldsymbol{\alpha}}+ k_1}}  \, n^{- \frac{2 \bar{\boldsymbol{\alpha}}}{2 \bar{\boldsymbol{\alpha}} + k_1}} \\
& & \quad  + \kappa^{\frac{2  l } {2 \bar{\boldsymbol{\beta}}+ l}}  L(g)^{\frac{2 l  } {2 \bar{\boldsymbol{\beta}}+ l}}  \, n^{- \frac{2 \bar{\boldsymbol{\beta}} }{2 \bar{\boldsymbol{\beta}} + l}}  + \frac{ \ln \big( \kappa^2  \|g\|_{\boldsymbol{\beta}}^2 k_2^{-1} \big) \vee \ln n \vee 1}{n} k_2 
\end{eqnarray*}
where $\boldsymbol{\alpha} \in (0,+\infty)^{k_1}$, $\boldsymbol{\beta} \in (0,+\infty)^l$ are such that $v_1 \in \mathcal{H}^{\boldsymbol{\alpha}} ([0,1]^{k_1})$, $g \in \mathcal{H}^{\boldsymbol{\beta}}  ( \left[-1, 1 \right]^l )$ with  $v_2 (\boldsymbol{x} ) = g \left( <\ttheta_1, \boldsymbol{x} >,\dots, <\ttheta_l,\boldsymbol{x} >  \right)$ and where $C > 0$ depends only on $k_1$, $l$, $\boldsymbol{\alpha}$ and $\boldsymbol{\beta}$.
\end{cor}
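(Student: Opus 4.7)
The plan is to apply Proposition~\ref{propmultiplicativemodels} to the product class $\F$ with two well chosen collections $\V_{1}$ and $\V_{2}$, then bound each of the two resulting infima separately: one by standard approximation of anisotropic Hölderian functions on $[0,1]^{k_{1}}$, the other by the approximation bounds for multiple-index composite functions of Corollary~2 of~\cite{BaraudComposite}.

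First I would take $\V_{1}$ to be a family of piecewise polynomial linear spaces on dyadic partitions of $[0,1]^{k_{1}}$, tensorised in the anisotropic way associated with $\boldsymbol{\alpha}$, indexed by their dimensions $D_{1} \in \N^{\star}$, with weights $\Delta_{1}(V_{1})$ of order $\dim V_{1}$ so that the Kraft-type inequality $\sum_{V_{1}} e^{-\Delta_{1}(V_{1})} \leq 1$ holds. By the approximation result of~\cite{Dahmen1980} recalled in Section~\ref{sectionhypclassiques}, for each $D_{1}$ there exists $V_{1} \in \V_{1}$ with $\dim V_{1} \asymp D_{1}$ and $d_{\t}^{2}(v_{1},V_{1}) \leq C L(v_{1})^{2} D_{1}^{-2\bar{\boldsymbol{\alpha}}/k_{1}}$. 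Optimising $\kappa^{2} L(v_{1})^{2} D_{1}^{-2\bar{\boldsymbol{\alpha}}/k_{1}} + D_{1}/n$ over $D_{1} \geq 1$ then yields, up to a remainder of order $n^{-1}$ (covering the trivial case $D_{1}=1$), exactly the first rate term $\kappa^{2k_{1}/(2\bar{\boldsymbol{\alpha}}+k_{1})} L(v_{1})^{2k_{1}/(2\bar{\boldsymbol{\alpha}}+k_{1})} n^{-2\bar{\boldsymbol{\alpha}}/(2\bar{\boldsymbol{\alpha}}+k_{1})}$.

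For the second component I would use the family $\V_{2}$ of models adapted to multiple-index functions on $\B(0,1) \subset \R^{k_{2}}$ constructed in Corollary~2 of~\cite{BaraudComposite}. That corollary produces a countable collection of finite-dimensional linear spaces of $\LL^{2}(\XX,\nu_{n})$ together with weights $\Delta_{2}$ satisfying $\sum_{V_{2} \in \V_{2}} e^{-\Delta_{2}(V_{2})} \leq 1$, such that for every $v_{2}(\boldsymbol{x}) = g(\langle \ttheta_{1},\boldsymbol{x}\rangle,\dots,\langle \ttheta_{l},\boldsymbol{x}\rangle)$ with $g \in \mathcal{H}^{\boldsymbol{\beta}}([-1,1]^{l})$ and $\ttheta_{j} \in \B(0,1)$, one can find $V_{2} \in \V_{2}$ of dimension $D_{2}$ such that $d_{\x}^{2}(v_{2},V_{2}) \leq C' L(g)^{2} D_{2}^{-2\bar{\boldsymbol{\beta}}/l}$ and $\Delta_{2}(V_{2}) \leq C'' (D_{2} + k_{2} (\ln(\kappa^{2}\|g\|_{\boldsymbol{\beta}}^{2}k_{2}^{-1}) \vee \ln n \vee 1))$, the extra logarithmic contribution coming from a $\delta$-net discretisation of the directions $\ttheta_{j}$ over the unit ball of $\R^{k_{2}}$. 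Multiplying the approximation error by $\kappa^{2}$ and optimising $\kappa^{2} L(g)^{2} D_{2}^{-2\bar{\boldsymbol{\beta}}/l} + D_{2}/n$ over $D_{2} \geq 1$ yields the second rate term $\kappa^{2l/(2\bar{\boldsymbol{\beta}}+l)} L(g)^{2l/(2\bar{\boldsymbol{\beta}}+l)} n^{-2\bar{\boldsymbol{\beta}}/(2\bar{\boldsymbol{\beta}}+l)}$, while the discretisation cost produces the last summand $k_{2}(\ln(\kappa^{2}\|g\|_{\boldsymbol{\beta}}^{2}k_{2}^{-1}) \vee \ln n \vee 1)/n$.

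Plugging these two bounds into the conclusion of Proposition~\ref{propmultiplicativemodels} and absorbing the dependence on $k_{1}, l, \boldsymbol{\alpha}, \boldsymbol{\beta}$ into the universal constant $C$ gives the stated inequality. The main obstacle is the careful propagation of the scaling factor $\kappa$ through the $V_{2}$-side: since Corollary~2 of~\cite{BaraudComposite} is stated with implicit normalisations on $g$ and $\ttheta_{j}$, one must verify that the $\kappa^{2}$ factor enters only the approximation term and that the logarithmic discretisation cost produced by the net over $\B(0,1)$ depends on $\kappa$ exactly in the form $\ln(\kappa^{2}\|g\|_{\boldsymbol{\beta}}^{2}k_{2}^{-1}) \vee \ln n \vee 1$; everything else is a direct bias/variance optimisation.
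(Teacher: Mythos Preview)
Your proposal is correct and follows exactly the route indicated in the paper: the corollary is stated there as ensuing directly from Proposition~\ref{propmultiplicativemodels} combined with Corollary~2 of~\cite{BaraudComposite}, with the $\V_{1}$-side handled by the anisotropic polynomial spaces of~\cite{Dahmen1980} as in Corollary~\ref{rateholdermultiplicative}. Your flagged obstacle about how $\kappa$ enters the logarithmic discretisation cost is the only nontrivial bookkeeping point, and you have identified it correctly.
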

When $\sqrt{s}$ belongs to the class $\F$, the risk bound  of the above inequality  corresponds to the one we would get if we could estimate the functions $v_1$ and $g$ separately. This risk bound is then better than the one we would get under smoothness assumptions on $v_2$ when $l < k_2$.

\subsection{Examples of parametric assumptions.} \label{sectionparametricproduit} 
Theorem~\ref{thmselectionmodelegeneral} also allows to deal with parametric assumptions. Hereafter, 
we consider a class~$\F$ of the form $$\F = \left\{ a u_{b} v_{\ttheta}, \, a \geq 0 , \, b \in I , \, \ttheta \in \R^{k_2} \right\}, $$
where $I$ is an interval of $\R$, $(u_b)_{b \in I}$ is a family of functions and $v_{\ttheta}$ is defined by $v_{\ttheta} (\boldsymbol{x} ) = \exp \left(<\boldsymbol{x}  , \ttheta> \right)$ for $\boldsymbol{x}  \in \XX = \{\boldsymbol{x}  \in \R^{k_2}, \sum_{j=1}^{k_2} x_j^2 \leq 1 \}$, the unit ball of~$\R^{k_2}$.
For each $i \in \{1,\dots n \}$, the intensity of $N_i$ is thus assumed to be proportional to an element of (or an element close to) some reference parametric model $\{u_b^2 , \, b \in I \}$. Let us give 3 examples of such models.

The Power Law Processes are Poisson processes whose intensities are proportional to $u_b(t) =  t^b$ for all  $t \in \TT = (0,1]$ and some  $b \in (-1/2,+\infty)$. Proposed first in~\cite{Duane1964}, this model is  popular in reliability. Indeed, although the intensity is simple, different situations can be modelled by this model.  For example, if $b =0$ each~$N_i$ obeys to an homogeneous Poisson process, whereas if $b > 0$ (respectively $b < 0$) the reliability of each system reduces (respectively improves) with  time.
In software reliability, we can cite the Goel-Okumoto model of~\cite{GoelOkumoto1979} and the $S$-Shaped model  of~\cite{Yamada1983}. The former considers intensities proportional to  $u_b(t) =  e^{- b t}$ whereas the latter corresponds to $u_b(t) =  \sqrt{t} e^{-b t}$ where $b \in [0,+\infty)$ and $t \in \TT = [0,+\infty)$. 


We consider the following assumption on the family  $\{u_b , \, b \in I \}$.
\begin{hyp}\label{assumptionparametricsansln}
The family $(u_{b})_{b \in  I}$ is a family of non vanishing functions of  $\L^2 (\TT, \mu)$ indexed by an interval $ I$ of the form $(b_0,+\infty)$. Moreover, there exists two positive non-increasing functions $ \underline{\rho}, \bar{\rho}$  on $I$, such that for all $b,b' \in  I$, 
$$ \underline{\rho} \left(b \vee b' \right) |b - b'| \leq \left\|\frac{u_{b}}{\|u_{b}\|_{\t}} - \frac{u_{b'}}{\|u_{b'}\|_{\t}} \right\|_{\t} \leq \bar{\rho} \left(b \wedge b' \right) |b - b'|.$$
\end{hyp}
The purpose of the lemmas below is to show that the above assumption  holds for the Duane, Goel-Okumoto and S-Shaped models.

\begin{lemme} \label{lemmepowerlawparametric1}
Let $ I = (-1/2,+\infty)$, $\TT = (0,1]$, $\mu$ the Lebesgue measure, and for $b \in  I$, $u_{b} (t) =  t^{b}$.
Assumption~\ref{assumptionparametricsansln} is satisfied with 
$$ \underline{\rho} (u) = \bar{\rho} (u) = \frac{1}{1 + 2 u} \quad \text{for all $u > -1/2$.}$$
\end{lemme}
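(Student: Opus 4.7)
The approach is to work with the normalized functions $\phi_b := u_b/\|u_b\|_\t$ and obtain the two bounds by different means. Since $\|u_b\|_\t^2 = \int_0^1 t^{2b}\d t = 1/(2b+1)$ is finite precisely when $b > -1/2$ and $u_b$ is everywhere positive on $(0,1]$, the non-vanishing and integrability hypotheses of Assumption~\ref{assumptionparametricsansln} are immediate. We have $\phi_b(t) = \sqrt{2b+1}\,t^b$ with $\|\phi_b\|_\t = 1$, and a short computation combined with the identity $(2b+1)(2b'+1) = (b+b'+1)^2 - (b-b')^2$ gives
$$\langle \phi_b, \phi_{b'}\rangle_\t = \frac{\sqrt{(2b+1)(2b'+1)}}{b+b'+1} = \sqrt{1-\delta^2},\qquad \delta := \frac{b-b'}{b+b'+1},$$
so that $\|\phi_b - \phi_{b'}\|_\t^2 = 2\bigl(1 - \sqrt{1-\delta^2}\bigr)$.

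The lower bound then follows quickly: the elementary inequality $1 - \sqrt{1-x} \geq x/2$ on $[0,1]$ yields $\|\phi_b - \phi_{b'}\|_\t \geq |\delta| = |b-b'|/(b+b'+1)$, and since $b+b'+1 \leq 1 + 2(b\vee b')$, this is at least $|b-b'|/(1+2(b\vee b'))$, as required.

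For the upper bound, the same exact formula only produces $\|\phi_b - \phi_{b'}\|_\t \leq \sqrt{2}\,|\delta|$, which carries an extraneous factor of $\sqrt{2}$ and is therefore insufficient. Instead, assuming $b > b'$, I would use the representation $\phi_b - \phi_{b'} = \int_{b'}^b \partial_c \phi_c \,\d c$ together with Minkowski's integral inequality to write $\|\phi_b - \phi_{b'}\|_\t \leq \int_{b'}^b \|\partial_c \phi_c\|_\t \,\d c$. Differentiation gives
$$\partial_c \phi_c(t) = \frac{t^c}{\sqrt{2c+1}}\bigl(1 + (2c+1)\ln t\bigr),$$
and using the identities $\int_0^1 t^{2c}(\ln t)^k\,\d t = (-1)^k k!/(2c+1)^{k+1}$ for $k=0,1,2$, the three terms in the expansion of $(1+(2c+1)\ln t)^2$ combine exactly to yield the clean formula $\|\partial_c \phi_c\|_\t = 1/(2c+1)$. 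Bounding $1/(2c+1) \leq 1/(2b'+1)$ on the interval $[b',b]$ then gives $\|\phi_b - \phi_{b'}\|_\t \leq (b-b')/(1+2(b\wedge b'))$, completing the proof.

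The main technical point is the cancellation producing $\|\partial_c \phi_c\|_\t = 1/(2c+1)$ with no additional numerical constant; this is precisely what allows the \emph{same} function $1/(1+2u)$ to serve simultaneously as $\underline{\rho}$ and $\bar{\rho}$. A naive estimate via the exact formula or via Cauchy--Schwarz instead of Minkowski's integral inequality would have introduced an unwanted constant and broken the symmetry.
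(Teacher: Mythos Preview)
Your proof is correct, but it takes a more roundabout route than the paper's for the upper bound. The paper obtains both inequalities directly from a single exact computation: writing
\[
\Bigl\| \tfrac{u_b}{\|u_b\|_\t} - \tfrac{u_{b'}}{\|u_{b'}\|_\t} \Bigr\|_\t^2
= \frac{4(b-b')^2}{(1+b+b')\bigl(\sqrt{2b+1}+\sqrt{2b'+1}\bigr)^2},
\]
and then bounding the two factors in the denominator from above and below by $1+2(b\vee b')$ and $1+2(b\wedge b')$ respectively. In particular, your assertion that ``the same exact formula only produces $\sqrt{2}\,|\delta|$'' is not quite accurate: it is your specific parametrisation through $\delta$ together with the crude estimate $1-\sqrt{1-x}\le x$ that loses the factor $\sqrt 2$; the factored form above avoids this loss entirely and yields the sharp constant on both sides with no differentiation needed.

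That said, your derivative argument via Minkowski's integral inequality and the identity $\|\partial_c\phi_c\|_\t = 1/(2c+1)$ is perfectly valid and rather elegant in its own right --- it makes transparent \emph{why} the constant is exactly $1$, and the method would extend to families where a closed-form expression for the distance is unavailable. The paper's route is shorter here only because the integral happens to factor so cleanly.
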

\begin{lemme} \label{lemmepowerlawparametric2}
Let $ I = (0,+\infty)$, $\TT = [0,+\infty)$ , $\mu$ the Lebesgue measure, $k \in \N$,  and for $b \in  I$, $u_{b} (t) =  t^{k/2} e^{-b t}$.
Assumption~\ref{assumptionparametricsansln} is satisfied with 
$$\underline{\rho} (u) = \frac{1}{2 u} \quad \text{and} \quad \bar{\rho} (u) = \frac{\sqrt{k + 1}}{2 u} \quad \text{for all $u > 0$.}$$
\end{lemme}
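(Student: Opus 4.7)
My approach is to compute the inner product $\langle u_{b}/\|u_{b}\|_{\t}, u_{b'}/\|u_{b'}\|_{\t}\rangle_{\t}$ in closed form, which reduces the problem to a one-parameter estimate, and then to squeeze $1-r^{k+1}$ between $(1-r)$ and $(k+1)(1-r)$ for $r\in(0,1]$.

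Step 1: an elementary gamma-integral computation gives $\|u_{b}\|_{\t}^{2} = \int_{0}^{+\infty} t^{k} e^{-2bt}\,\d t = k!/(2b)^{k+1}$ and $\langle u_{b},u_{b'}\rangle_{\t} = k!/(b+b')^{k+1}$. Dividing, and setting
$$r = r(b,b') = \frac{2\sqrt{bb'}}{b+b'} \in (0,1],$$
we get the Cauchy--Schwarz ratio
$$\left\langle \frac{u_{b}}{\|u_{b}\|_{\t}}, \frac{u_{b'}}{\|u_{b'}\|_{\t}} \right\rangle_{\t} = \left(\frac{2\sqrt{bb'}}{b+b'}\right)^{k+1} = r^{k+1}.$$
Since both normalized vectors have unit norm,
$$\left\|\frac{u_{b}}{\|u_{b}\|_{\t}} - \frac{u_{b'}}{\|u_{b'}\|_{\t}}\right\|_{\t}^{2} = 2\bigl(1 - r^{k+1}\bigr).$$

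Step 2: from the telescoping identity $1 - r^{k+1} = (1-r)(1 + r + \cdots + r^{k})$ and $r\in(0,1]$, I get the two-sided bound
$$1 - r \;\leq\; 1 - r^{k+1} \;\leq\; (k+1)(1-r).$$
Then I rewrite $1-r$ in a form that makes the dependence on $|b-b'|$ transparent:
$$1 - r = \frac{b + b' - 2\sqrt{bb'}}{b+b'} = \frac{(\sqrt{b}-\sqrt{b'})^{2}}{b+b'} = \frac{(b-b')^{2}}{(b+b')\,(\sqrt{b}+\sqrt{b'})^{2}}.$$

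Step 3: to extract the ratios $\underline{\rho}$ and $\bar{\rho}$, I bound the denominator $D := (b+b')(\sqrt{b}+\sqrt{b'})^{2}$. Setting $a = b\wedge b'$ and $A = b\vee b'$, one has $D \leq 2(b+b')^{2}\leq 8A^{2}$ and $D \geq (2\sqrt{bb'}+2\sqrt{bb'})\cdot 2\sqrt{bb'}\cdot 2 = 8bb'\geq 8a^{2}$ (from $b+b'\geq 2\sqrt{bb'}$ and $(\sqrt{b}+\sqrt{b'})^{2}\geq 4\sqrt{bb'}$). Plugging in:
$$\frac{(b-b')^{2}}{4A^{2}} \;\leq\; 2(1-r) \;\leq\; 2(1-r^{k+1}) \;\leq\; 2(k+1)(1-r) \;\leq\; \frac{(k+1)(b-b')^{2}}{4a^{2}}.$$
Taking square roots yields exactly
$$\frac{|b-b'|}{2(b\vee b')} \;\leq\; \left\|\frac{u_{b}}{\|u_{b}\|_{\t}} - \frac{u_{b'}}{\|u_{b'}\|_{\t}}\right\|_{\t} \;\leq\; \frac{\sqrt{k+1}\,|b-b'|}{2(b\wedge b')},$$
so Assumption~\ref{assumptionparametricsansln} holds with $\underline{\rho}(u) = 1/(2u)$ and $\bar{\rho}(u) = \sqrt{k+1}/(2u)$, both of which are manifestly positive and non-increasing on $(0,+\infty)$.

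There is no real obstacle: the only mild technicality is the bookkeeping to turn the tight bound $2(1-r^{k+1})$ into separate bounds involving $b\vee b'$ on one side and $b\wedge b'$ on the other, which is handled by the two elementary estimates on $D$ above. The factor $\sqrt{k+1}$ is the only place where $k$ enters, and it comes directly from the geometric-sum inequality in Step~2.
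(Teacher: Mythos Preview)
Your proof is correct and follows essentially the same route as the paper: both compute $\|u_b/\|u_b\|_{\t}-u_{b'}/\|u_{b'}\|_{\t}\|_{\t}^2 = 2(1-r^{k+1})$ with $r=2\sqrt{bb'}/(b+b')$, use the telescoping factorization $1-r^{k+1}=(1-r)\sum_{j=0}^k r^j$ to extract the factor $k+1$, and then bound $D=(b+b')(\sqrt b+\sqrt{b'})^2$ between $8(b\wedge b')^2$ and $8(b\vee b')^2$. The inline expression in Step~3 for the lower bound on $D$ is slightly garbled, but your parenthetical justification (via $b+b'\ge 2\sqrt{bb'}$ and $(\sqrt b+\sqrt{b'})^2\ge 4\sqrt{bb'}$) gives the correct $D\ge 8bb'\ge 8(b\wedge b')^2$.
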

All along this section, $\|\cdot\|$ denotes the standard Euclidean norm of $\R^{k_2}$ 
$$\forall \boldsymbol{x}   \in \R^{k_2}, \, \quad \|\boldsymbol{x}\|^2 = \sum_{j=1}^{k_2} x_j^2$$
and $d$ the distance induced by this norm.

\begin{prop}  \label{proppowerlawundercox}
Let $(u_{b})_{b \in  I}$ be a family such that Assumption~\ref{assumptionparametricsansln} holds. 
There exist $\hat{a} \geq 0$, $\hat{b} \in I$ and $\tttheta \in \R^{k_2}$,  such that the estimator~$\hat{s} = (\hat{a} u_{\hat{b}} v_{\tttheta})^2$ satisfies, for all $a \geq 0$, $b \in  I$, $\ttheta \in \R^{k_2}$, and $f \in \F$ of the form $f(t,\boldsymbol{x} ) = a u_b(t) v_{\ttheta} (\boldsymbol{x} )$, 
\begin{eqnarray} \label{relapowerlawundercox}
C \E\left[ H^2(s,\hat{s}) \right] \leq    d_2^2 \left(\sqrt{s},  f \right)  + \frac{k_2 \left(1 \vee \|\ttheta\| \right)}{n} + \frac{C'}{n}
\end{eqnarray}
where $C$ is an universal positive constant and where $C'$ depends only on $\underline{\rho}$, $\bar{\rho}$, $b_0$ and  $b$. More precisely,
$$C' =   \log \left[1 \vee \bar{\rho} \left(b_0 + \frac{b-b_0}{b-b_0+1} \right) \right] + \left|\log \left(1 \wedge \underline{\rho} (1 + b) \right) \right| + \left|\log (b - b_0) \right|. $$
\end{prop}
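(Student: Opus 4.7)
The plan is to apply Theorem~\ref{thmselectionmodelegeneral} to a carefully chosen countable collection $\V$ of one-dimensional linear subspaces of $\L^2(\TT \times \XX, M)$, each of the form $V_{j,\lambda} = \R \cdot u_{b_j} v_{\ttheta_\lambda}$, indexed by meshes $(b_j)_{j \in J}$ of $I$ and $(\ttheta_\lambda)_{\lambda \in \Lambda}$ of $\R^{k_2}$. Since $\dim V_{j,\lambda} = 1$, the intrinsic term satisfies $\eta_{V_{j,\lambda}}^2 \leq 1/n$. The T-estimator selects an element $\hat t \in V_{j^*, \lambda^*}$ that may be written $\hat a u_{\hat b} v_{\tttheta}$ with $\hat a \geq 0$ (up to sign), so that $\hat s = \hat t^2$ has the claimed parametric form.

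I would take the $\ttheta$-mesh to be a cubic grid of $\R^{k_2}$ of fixed precision $\delta \leq 1$, with weights $\Delta_\ttheta(\ttheta_\lambda) = c_1 k_2 (1 \vee \|\ttheta_\lambda\|) + c_2$; a volume comparison over dyadic shells combined with Stirling's formula yields $\sum_\lambda e^{-\Delta_\ttheta(\ttheta_\lambda)} \leq 1/2$ for suitable $c_1, c_2 > 0$. The $b$-mesh should be non-uniform, adapted to the local modulus $\bar{\rho}$ of Assumption~\ref{assumptionparametricsansln}: introducing the uniformizing coordinate $\phi(b) = \int^{b} \bar{\rho}(u)\, du$, I would set $b_j = \phi^{-1}(j/\sqrt{n})$ for $j \in \Z$ and assign weights $\Delta_b(b_j) = 2\log(1+|j|) + $ const so that $\sum_j e^{-\Delta_b(b_j)} \leq 1/2$. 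Combining the two gives a mapping on $\V$ with $\sum_{j,\lambda} e^{-(\Delta_b(b_j) + \Delta_\ttheta(\ttheta_\lambda))} \leq 1$ as required.

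Given $f = a u_b v_\ttheta \in \F$, let $(b_{j_0}, \ttheta_{\lambda_0})$ be the mesh points closest to $(b, \ttheta)$. Since $|\langle \boldsymbol{x}, \ttheta - \ttheta_{\lambda_0}\rangle| \leq \|\ttheta - \ttheta_{\lambda_0}\|$ on $\XX = \B(0,1)$, the identity $v_{\ttheta_{\lambda_0}}/v_\ttheta = \exp(\langle \boldsymbol{x}, \ttheta_{\lambda_0} - \ttheta\rangle)$ yields $\|v_\ttheta - v_{\ttheta_{\lambda_0}}\|_\x \leq e \|\ttheta - \ttheta_{\lambda_0}\| \|v_\ttheta\|_\x$ for $\|\ttheta - \ttheta_{\lambda_0}\| \leq 1$, hence after normalization $\|\tilde v_\ttheta - \tilde v_{\ttheta_{\lambda_0}}\|_\x \leq 2 e \|\ttheta - \ttheta_{\lambda_0}\|$ where $\tilde v_\ttheta = v_\ttheta / \|v_\ttheta\|_\x$. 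Crucially, this Lipschitz constant is \emph{independent} of $\|\ttheta\|$. Combined with Assumption~\ref{assumptionparametricsansln} for the $u_b$-component, and taking as approximating element of $V_{j_0, \lambda_0}$ the $\L^2$-projection of $\sqrt s$ onto the line (whose coefficient is at most $\|\sqrt s\|_2$), the triangle inequality gives
\[
d_2^2\!\left(\sqrt s, V_{j_0, \lambda_0}\right) \leq 2\, d_2^2\!\left(\sqrt s, f\right) + C \|\sqrt s\|_2^2 \left(\bar\rho^2(b \wedge b_{j_0})(b - b_{j_0})^2 + \|\ttheta - \ttheta_{\lambda_0}\|^2\right),
\]
which is of order $d_2^2(\sqrt s, f) + 1/n$ by the mesh precision.

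Plugging into Theorem~\ref{thmselectionmodelegeneral}: $\Delta_\ttheta(\ttheta_{\lambda_0}) \leq C k_2 (1 \vee \|\ttheta\|)$, while the position $|j_0|$ of $b_{j_0}$ in the $\phi$-uniformized mesh is at most $C \sqrt n\, (\phi(b) - \phi(\text{reference}))$. Bounding $\phi$ near $b_0$ via Assumption~\ref{assumptionparametricsansln} produces exactly the $\log \bar\rho + |\log(b-b_0)|$ contributions to $C'$; the $|\log \underline{\rho}|$ term accounts for the requirement that the mesh be sufficiently separated (so that distinct $b_j$'s yield distinguishable models). The main obstacle is the fine design of the $b$-mesh near the endpoint $b_0$ (where $\bar\rho$ may be unbounded) so that the weight $\Delta_b(b_{j_0})$ reproduces exactly the three logarithmic terms appearing in $C'$; this requires careful tracking of $\phi$ and of the ratio $\bar\rho / \underline{\rho}$, using the lower bound in Assumption~\ref{assumptionparametricsansln} to control the density of the mesh.
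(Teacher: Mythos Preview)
Your approach differs substantially from the paper's, and as written it does not close. The paper obtains Proposition~\ref{proppowerlawundercox} as the special case $\W=\{\R^{k_2}\}$, $\Delta(\R^{k_2})=0$ of Proposition~\ref{proppowerlawundercox2}, whose proof does \emph{not} discretise $b$ or $\ttheta$. Instead it takes as models the full parametric slabs
\[
V(r,R,W,\varrho)=\big\{\, a\, u_{b}\, v_{\ttheta'} : a\geq 0,\ b\in[r,R],\ \ttheta'\in W,\ \|\ttheta'\|\leq\varrho \,\big\},
\]
and controls their metric dimension via a product-space lemma (Lemma~\ref{lemmeproduitparametrised}, a refinement of Lemma~\ref{lemmeproduitlinear}): writing each element as $\kappa\cdot(\text{unit }u)\cdot(\text{unit }v)$, the bi-Lipschitz bounds supplied by Assumption~\ref{assumptionparametricsansln} and by Lemma~\ref{lemmapreuveproduitproof} yield $D_{V(r,R,W,\varrho)}\lesssim 1+\varrho\dim W+\log\!\big(1+\bar\rho(r)/\underline\rho(R)\big)$. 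The point is that for suitable $(r,R,\varrho)$ the target $f=a u_b v_{\ttheta}$ lies \emph{inside} the model, so the bias term vanishes identically and one pays only $D_V/n$ plus the weight; this is precisely how the scale $a$ drops out of~\eqref{relapowerlawundercox}.

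In your scheme the models are lines $\R\cdot u_{b_j}v_{\ttheta_\lambda}$, so $f$ is not in any of them, and the bias $d_2(f,V_{j_0,\lambda_0})$ is of order $\|f\|_2$ times the angular mesh size. Since $\|f\|_2=a\,\|u_b\|_\t\,\|v_\ttheta\|_\x$ is unbounded and absent from~\eqref{relapowerlawundercox}, your displayed estimate $2d_2^2(\sqrt s,f)+C\|\sqrt s\|_2^2(\cdots)$ is \emph{not} of order $d_2^2(\sqrt s,f)+1/n$: take $s=f^2$ with $a\to\infty$ and fixed $(b,\ttheta)$ to see the right-hand side explode while the target stays $O(1/n)$. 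Trying to cure this by refining the mesh with $n$ runs into a second obstacle: a cubic grid of step $c/\sqrt n$ in $\R^{k_2}$ has $\sim n^{k_2/2}$ points per unit ball, forcing $\Delta_\ttheta\gtrsim k_2\log n$; likewise your choice $b_j=\phi^{-1}(j/\sqrt n)$ gives $|j_0|\asymp\sqrt n\,\phi(b)$, hence $\Delta_b(b_{j_0})\sim\log n+\cdots$. Both produce extra $(\log n)/n$ terms not present in the statement. The paper sidesteps both difficulties by putting the continuous parameters into the model rather than into the model index; the lower bound $\underline\rho$ in Assumption~\ref{assumptionparametricsansln} is used not to separate mesh points but, through Lemma~\ref{lemmeproduitparametrised}, to bound the metric dimension of the slab.
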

Under parametric assumptions on $s$, this result says that the rate of convergence of~$\hat{s}$ is of order $n^{-1}$, which is quite satisfying when $n$ is large, but may be inadequate  in a non-asymptotic point of view.
Indeed, the second term of the right-hand side of inequality~{(\ref{relapowerlawundercox})} may be large  especially when $k_2$ is large, says larger than $n$. This difficulty can be overcome by considering that $\ttheta$ is sparse, which means that $\ttheta$ is close to some (unknown) linear subspace $W$ of $\R^{k_2}$ with $\dim W$ small. Below, we generalize Proposition~\ref{proppowerlawundercox} to take account of this situation.

\begin{prop}  \label{proppowerlawundercox2}
Let $(u_{b})_{b \in  I}$ be a family such that Assumption~\ref{assumptionparametricsansln} holds. 
Let $\W$  be an at most countable family of linear subspaces of $\R^{k_2}$ and let $\Delta$ be a non-negative map on~$\W$ such that $\sum_{W \in \W} e^{-\Delta(W)} \leq 1$.

There exist $\hat{a} \geq 0$, $\hat{b} \in I$ and $\tttheta \in \R^{k_2}$,  such that the estimator~$\hat{s} = (\hat{a} u_{\hat{b}} v_{\tttheta})^2$ satisfies, for all $a \geq 0$, $b \in  I$, $\ttheta \in \R^{k_2}$, and $f \in \F$ of the form $f(t,\boldsymbol{x} ) = a u_b(t) v_{\ttheta} (\boldsymbol{x} )$, 
\begin{eqnarray*}
C \E\left[ H^2(s,\hat{s}) \right] &\leq&   d_2^2 (\sqrt{s},  f ) +  \frac{C'}{n}   \\
& &  \!\!\!\! \!\!\!\! \!\!\!\!\!\! +   \inf_{W \in \W } \left\lbrace   a^2 \|u_b\|_{\t}^2 e^{2 \|\theta\|} d^2 (\ttheta,W)  +  \frac{(1 \vee \dim W) (1 \vee \|\ttheta\|)   + \Delta(W) }{n}    \right\rbrace 
\end{eqnarray*}
where $C$ is an universal positive constant and where $C'$ is given by Proposition~\ref{proppowerlawundercox}.
\end{prop}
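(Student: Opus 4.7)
The idea is to recover Proposition~\ref{proppowerlawundercox} when the unknown parameter $\ttheta$ is forced to lie in a linear subspace $W$ of $\R^{k_2}$, and then to aggregate these restricted procedures through a model selection step indexed by $\W$. For each $W\in\W$ of dimension $d_W$, I introduce the restricted class
\[
\F_W = \left\{\, a\, u_b\, v_{\ttheta} : a\ge 0,\; b\in I,\; \ttheta\in W\,\right\}.
\]
The proof of Proposition~\ref{proppowerlawundercox} essentially produces, for the unrestricted class $\F$, a countable collection of finite--dimensional models together with weights whose log--cardinalities scale as $k_2(1\vee\|\ttheta\|)$. The crucial observation is that the very same construction, applied inside $\F_W$ rather than $\F$, yields a collection $\V_W$ of models whose metric dimension scales as $d_W+2$ (one coordinate each for $a$ and $b$, plus $d_W$ for the component of $\ttheta$ belonging to $W$) and whose weights produce, for the $\ttheta$--part, a log--cardinality of order $(1\vee d_W)(1\vee\|\ttheta\|)$.

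\textbf{Construction and application of Theorem~\ref{thmselectionmodelegeneral}.} Set $\V = \bigcup_{W\in\W}\V_W$ and define the global weights by $\widetilde{\Delta}(V) = \Delta_W(V)+\Delta(W)$ when $V\in\V_W$. Since $\sum_{V\in\V_W}e^{-\Delta_W(V)}\le 1$ and $\sum_{W\in\W}e^{-\Delta(W)}\le 1$ by hypothesis, one obtains $\sum_{V\in\V}e^{-\widetilde{\Delta}(V)}\le 1$. Theorem~\ref{thmselectionmodelegeneral} then furnishes an estimator $\hat{s}$ whose risk is bounded by the infimum, over $V\in\V$, of the approximation plus complexity terms; splitting this infimum into a double infimum first over $W\in\W$ and then over $V\in\V_W$ gives the oracle bound
\[
C\,\E[H^2(s,\hat{s})]\;\le\;\inf_{W\in\W}\Bigl\{\varepsilon_{\F_W}(\sqrt{s}) + \tfrac{\Delta(W)}{n}\Bigr\},
\]
where $\varepsilon_{\F_W}(\sqrt{s})$ is the quantity one would obtain by applying the argument of Proposition~\ref{proppowerlawundercox} to the class $\F_W$. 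As in the proof of Proposition~\ref{proppowerlawundercox}, the selected $\hat{s}$ can be chosen of the form $(\hat{a}\, u_{\hat{b}}\, v_{\tttheta})^2$ with $\tttheta\in\bigcup_{W\in\W} W\subset\R^{k_2}$.

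\textbf{Approximation step.} Given $f=a\,u_b\,v_{\ttheta}\in\F$ and $W\in\W$, let $\ttheta_W$ denote the orthogonal projection of $\ttheta$ onto $W$, so that $\|\ttheta-\ttheta_W\|=d(\ttheta,W)$ and $\|\ttheta_W\|\le\|\ttheta\|$. Using $\|\boldsymbol{x}\|\le 1$ on $\XX$ together with the elementary bound $|e^{y}-1|\le|y|\,e^{|y|}$,
\[
|v_{\ttheta}(\boldsymbol{x}) - v_{\ttheta_W}(\boldsymbol{x})| \;\le\; e^{\|\ttheta_W\|}\,|e^{\langle\boldsymbol{x},\ttheta-\ttheta_W\rangle}-1|\;\le\; d(\ttheta,W)\,e^{\|\ttheta\|}\,e^{d(\ttheta,W)}.
\]
When $d(\ttheta,W)\le 1$ this yields $d_2^2(a\,u_b\,v_{\ttheta},\,a\,u_b\,v_{\ttheta_W})\le C\,a^2\|u_b\|_{\t}^2\,e^{2\|\ttheta\|}\,d^2(\ttheta,W)$; when $d(\ttheta,W)>1$ the corresponding term on the right--hand side of the proposition is already larger than the Hellinger squared distance between any two probability densities in $\F$, so the bound is trivial. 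Since $a\,u_b\,v_{\ttheta_W}\in\F_W$, applying Proposition~\ref{proppowerlawundercox} inside $\F_W$ (which yields $\varepsilon_{\F_W}(a\,u_b\,v_{\ttheta_W})\le C'/n + (1\vee d_W)(1\vee\|\ttheta_W\|)/n$) and the triangle inequality $d_2^2(\sqrt{s},a\,u_b\,v_{\ttheta_W})\le 2 d_2^2(\sqrt{s},f) + 2 d_2^2(f,a\,u_b\,v_{\ttheta_W})$ delivers the announced bound after taking the infimum over $W\in\W$.

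\textbf{Expected main obstacle.} The technical difficulty is the construction of $\V_W$ with the right balance: one needs the metric dimension of each model to grow linearly in $d_W$ (rather than in $k_2$) while the weights continue to produce only a factor $1\vee\|\ttheta\|$ in the complexity term, not an exponential dependence on $\|\ttheta\|$. This requires a careful two--scale discretization of the parametric family $\{v_{\ttheta}:\ttheta\in W\cap\B(0,R)\}$ with $R\asymp\|\ttheta\|$ and resolution $\varepsilon\asymp e^{-\|\ttheta\|}/\sqrt{n}$, combined with the same treatment of $a$ and $b$ as in Proposition~\ref{proppowerlawundercox}; once this discretization is in place, the aggregation via $\Delta(W)$ and the approximation bound above are essentially book--keeping.
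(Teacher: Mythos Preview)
Your proposal is correct and follows essentially the same route as the paper: build models indexed by $W\in\W$ together with truncation parameters $(r,R,\varrho)$, apply Theorem~\ref{thmselectionmodelegeneral} to the aggregated collection, and then approximate $\ttheta$ by its orthogonal projection $\ttheta_W$ onto $W$; the paper handles the ``main obstacle'' you identify via Lemma~\ref{lemmeproduitparametrised} and the bi-Lipschitz estimate of Lemma~\ref{lemmapreuveproduitproof}, which gives metric dimension of order $\varrho\dim W$ exactly as you anticipate. One minor slip: in your case $d(\ttheta,W)>1$ the justification via ``probability densities'' does not apply here (these are Poisson intensities, so $H$ is not bounded by $1$); instead use the trivial bound $d_{\x}^2(v_{\ttheta},v_{\ttheta_W})\le 4e^{2\|\ttheta\|}$, or simply follow the paper's single-line estimate $d_{\x}(v_{\ttheta},v_{\ttheta'})\le e^{\|\ttheta\|\vee\|\ttheta'\|}\|\ttheta-\ttheta'\|$, which avoids the case split altogether.
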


For illustration purpose, let us make explicit the constant $C'$ for the Duane model, and let us therefore assume that there exist some unknown parameters $a, b, \ttheta$ such that $s$ is of the form  $\sqrt{s (t,\boldsymbol{x} )} =  a t^b  \exp \left(<\ttheta, \boldsymbol{x} > \right)$.
We derive from Proposition~\ref{proppowerlawundercox} an estimator whose risk satisfies
\begin{eqnarray}\label{riskexplicitpowerlaw}
C \E\left[ H^2(s,\hat{s}) \right] \leq \frac{\left(1 \vee \|\ttheta\| \right) k_2 + |\log(2 b + 1)|}{n}
\end{eqnarray}
where $C$ is an universal positive constant. 
 However, if for instance $k_2$ is large and if most of the components of $\ttheta$ are small or null, the preceding proposition can be used to improve substantially the risk of our estimators.  For simplicity, assume that
 $$k_{\star} = \left| \left\{ j \in \{1,\dots , k_2 \}, \theta_j \neq 0 \right\} \right| $$
 is small. We then define    the set $\M$ of all subsets of $\{1,\dots, k_2\}$, and for each $m \in \M$, the set
 $$W_m = \left\{ (y_1,\dots, y_{k_2}) , \, \forall j \not \in m , \, y_j = 0 \right\} \subset \R^{k_2}.$$
 We apply Proposition~\ref{proppowerlawundercox2} with 
  $$\W  = \left\{W_m , \, m \in \mathcal{M} \right\} \quad \text{and} \quad \forall m \in \M, \;\Delta(W_m) = 1 + |m| + \log \binom {k_2}{|m|}. $$
This leads to an estimator $\hat{s}$ such that
$$C'' \E\left[ H^2(s,\hat{s}) \right] \leq    \frac{(1 \vee \log k_2 \vee \|\ttheta\|) (1 \vee k_{\star})  + |\log(2 b + 1)|}{n}     ,$$ 
which improves inequality~{(\ref{riskexplicitpowerlaw})} when $k_{\star}$ is small and  $k_2$ large. 

\section{Parametric models} \label{sectionrobuste}
In this section, we consider the natural situation where the intensity of each process~$N_i$ belongs (or is close) to a same parametric model. Throughout this section, $n \geq 2$.
 Let us consider a closed rectangle $\Theta$ of $\R^k$, that is a subset of $\R^k$ for which there exist $m_1,\dots,m_k \in \R \cup \{-\infty\}$ and $M_1,\dots,M_k \in \R \cup \{\infty\}$ such that
 $$\Theta = \left\{ \boldsymbol{x} \in \R^k, \, \forall i \in \{1,\dots,k\}, \, m_i \leq x_i \leq M_i \right\}.$$
 Let us denote by  $\mathcal{F} = \left\{ f_{\ttheta} , \, \ttheta \in \Theta \right\}$ a class of functions of $\LL^2(\TT, \mu)$.  Our aim is to estimate~$s$ when, for each $i \in \{1,\dots n\}$, the square root of the intensity of the Poisson process~$N_i$, $\sqrt{s(\cdot, x_i)}$, is (or is close to) an element of $\mathcal{F}$. We introduce thus the class of functions~$\F$ defined by
\begin{eqnarray*}\label{Frobuste}
\F = \left\{(t,x) \mapsto f_{\boldsymbol{u} (x)} (t) , \, \text{where $\boldsymbol{u}$ is a map from $\XX$  into $\Theta$} \right\}.
\end{eqnarray*}
For instance, if $\mathcal{F}$  corresponds to the Duane model (see Section~\ref{sectionparametricproduit}), $\Theta$ is a closed rectangle included in $\R \times (-1/2,+\infty)$ and 
$$\mathcal{F} = \left\{a t^b, \, (a,b) \in \Theta \right\}.$$
The class $\F$ is  then the set of all functions $f$ of the form $f(t,x) = a(x) t^{b(x)}$ where~$a$ and~$b$ are two functions on $\XX$ such that $(a(x),b(x)) \in \Theta$ for all $x \in \XX$.

We consider the following assumption to deal with more general classes $\mathcal{F}$.
\begin{hyp} \label{assumptionFRobuste}
The set $\Theta$ is a closed rectangle of $\R^k$. There exist $\boldsymbol{\alpha}  = (\alpha_j)_{1 \leq j \leq k} \in (0,1]^k$ and $\boldsymbol{R} = (R_j)_{1 \leq j \leq k}  \in (0,+\infty)^k$  such that 
\begin{eqnarray} \label{equationFRobuste}
 \forall \ttheta, \ttheta' \in \Theta , \quad \|f_{\ttheta} - f_{\ttheta'}\|_{\t} \leq \sum_{j=1}^k R_{j}  \, |\theta_j - \theta_j'|^{\alpha_j}.
\end{eqnarray}
\end{hyp}
The aim of the lemmas below is to prove that this assumption is satisfied for the Duane, Goel-Okumoto and S-Shaped models.

\begin{lemme} \label{LemmeDuaneModelParametric}
Let $\mu$ be the Lebesgue measure, and for all $\ttheta \in \R \times [-1/2,+\infty)$, 
\begin{eqnarray*} 
f_{\ttheta} (t) = \theta_1 t^{\theta_2} \quad \text{for all $t \in \TT = (0,1]$.}
\end{eqnarray*}
Then, for all positive numbers $r_1,r_2$, and all $\ttheta, \ttheta' \in [-r_1,r_1] \times \left[- 1/2+ 1/r_2 , + \infty \right)$,
\begin{eqnarray*}
 \|f_{\ttheta} - f_{\ttheta'}\|_{\t} \leq r_2^{1/2} |\theta_1 - \theta'_1| + \sqrt{2} r_1 r_2^{3/2} |\theta_2 - \theta'_2|.
\end{eqnarray*}
\end{lemme}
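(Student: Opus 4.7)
The plan is to split the difference by the triangle inequality into a part that only moves $\theta_1$ and a part that only moves $\theta_2$:
\begin{equation*}
\|f_{\ttheta} - f_{\ttheta'}\|_{\t} \leq |\theta_1 - \theta_1'| \, \|t^{\theta_2}\|_{\t} + |\theta_1'| \, \|t^{\theta_2} - t^{\theta_2'}\|_{\t},
\end{equation*}
and then estimate each of the two $\L^2$-norms separately.

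For the first norm, a direct computation gives
\begin{equation*}
\|t^{\theta_2}\|_{\t}^2 = \int_0^1 t^{2\theta_2} \d t = \frac{1}{2\theta_2 + 1}.
\end{equation*}
Since $\theta_2 \geq -1/2 + 1/r_2$, we have $2\theta_2 + 1 \geq 2/r_2$, so $\|t^{\theta_2}\|_{\t} \leq \sqrt{r_2/2} \leq r_2^{1/2}$, which yields the first term of the claimed bound.

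For the second norm, I would use the mean value theorem applied to $u \mapsto t^u$. Setting $\alpha = \theta_2 \wedge \theta_2'$, the derivative $t^u \ln t$ is maximal in absolute value (for fixed $t \in (0,1]$) when $u = \alpha$, since $t^u$ is decreasing in $u$ and $\ln t \leq 0$; hence
\begin{equation*}
|t^{\theta_2} - t^{\theta_2'}| \leq |\theta_2 - \theta_2'| \, t^{\alpha} |\ln t|.
\end{equation*}
Squaring and integrating, and using $|\theta_1'| \leq r_1$, leaves me with the integral
\begin{equation*}
\int_0^1 (\ln t)^2 \, t^{2\alpha} \d t,
\end{equation*}
which after the substitution $u = -\ln t$ becomes $\int_0^{+\infty} u^2 e^{-(2\alpha+1)u}\d u = 2/(2\alpha+1)^3$. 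Using again $2\alpha + 1 \geq 2/r_2$, this is bounded by $r_2^3/4$, giving $\|t^{\theta_2} - t^{\theta_2'}\|_{\t} \leq \tfrac{1}{2} r_2^{3/2} |\theta_2 - \theta_2'|$, which is even sharper than the $\sqrt{2}\, r_2^{3/2}$ appearing in the statement.

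There is no real obstacle here; the only step that requires care is keeping track of the monotonicity of $u \mapsto t^u$ for $t \in (0,1]$ when choosing which endpoint of $[\theta_2 \wedge \theta_2', \theta_2 \vee \theta_2']$ bounds $t^u$ from above. Once this is correctly identified as $\alpha = \theta_2 \wedge \theta_2'$, the lower bound $\alpha \geq -1/2 + 1/r_2$ feeds directly into the Gamma-function integral and closes the argument.
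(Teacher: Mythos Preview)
Your proof is correct and follows the same overall strategy as the paper: split $f_{\ttheta}-f_{\ttheta'}$ via the triangle inequality into a $\theta_1$-part and a $\theta_2$-part, and bound each $\L^2$-norm using $2\theta_2+1\ge 2/r_2$. The only difference lies in the treatment of $\|t^{\theta_2}-t^{\theta_2'}\|_{\t}$: the paper computes this norm \emph{exactly} as
\[
\int_0^1 (t^{\theta_2}-t^{\theta_2'})^2\,\d t=\frac{2(\theta_2-\theta_2')^2}{(1+2\theta_2)(1+\theta_2+\theta_2')(1+2\theta_2')}
\]
and then bounds the denominator from below, whereas you bound the integrand pointwise by the mean value theorem and evaluate the resulting Gamma-type integral $\int_0^1(\ln t)^2 t^{2\alpha}\,\d t=2/(2\alpha+1)^3$. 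Both routes deliver a constant no worse than the stated~$\sqrt{2}\,r_1 r_2^{3/2}$; your mean-value argument is slightly more robust (it does not rely on a closed-form identity), while the paper's identity makes the dependence on $\theta_2,\theta_2'$ fully explicit.
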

\begin{lemme} \label{LemmeDuaneModelParametric2}
Let $\mu$ be the Lebesgue measure, and for all $k \in \{0,1\}$,  $\ttheta = ( \theta_1,  \theta_2) \in \R \times (0,+\infty)$,
\begin{eqnarray*} 
f_{\ttheta} (t) =  \theta_1 t^{k/2} e^{- \theta_2 t } \quad \text{for all $t \in \TT = (0,+\infty)$.} 
\end{eqnarray*}
Let $r_1, r_2$ be two positive numbers and let us set
\begin{eqnarray*}
C_1 (0) =  \left(r_2/2\right)^{1/2}  \quad C_2 (0) = {r_1 r_2^{3/2}}/2 \quad C_1 (1) =  r_2/2 \; \text{and} \quad  C_2 (1) =\left(3/8\right)^{1/2} r_1 r_2^2.
\end{eqnarray*} 
For  all $\ttheta, \ttheta' \in [-r_1,r_1] \times \left[1/r_2 , + \infty \right)$,
\begin{eqnarray*}
\|f_{\ttheta} - f_{\ttheta'}\|_{\t} \leq C_1 (k) |\theta_1 - \theta'_1| + C_2(k) |\theta_2 - \theta'_2|.
\end{eqnarray*}
\end{lemme}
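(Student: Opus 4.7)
The plan is to bound $\|f_{\ttheta}-f_{\ttheta'}\|_{\t}$ by splitting the variation in the two parameters via the triangle inequality:
\begin{equation*}
\|f_{\ttheta}-f_{\ttheta'}\|_{\t} \leq \|f_{(\theta_1,\theta_2)} - f_{(\theta_1',\theta_2)}\|_{\t} + \|f_{(\theta_1',\theta_2)} - f_{(\theta_1',\theta_2')}\|_{\t} ,
\end{equation*}
which factorizes as
\begin{equation*}
|\theta_1-\theta_1'|\,\bigl\|t^{k/2}e^{-\theta_2 t}\bigr\|_{\t} \;+\; |\theta_1'|\,\bigl\|t^{k/2}\bigl(e^{-\theta_2 t}-e^{-\theta_2' t}\bigr)\bigr\|_{\t}.
\end{equation*}

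For the coefficient of $|\theta_1-\theta_1'|$, I would compute the Gaussian-type integral $\int_0^{+\infty} t^k e^{-2\theta_2 t}\,\d t = k!/(2\theta_2)^{k+1}$ explicitly, obtaining $1/\sqrt{2\theta_2}$ when $k=0$ and $1/(2\theta_2)$ when $k=1$. Using the constraint $\theta_2 \geq 1/r_2$, these bounds become $\sqrt{r_2/2}$ and $r_2/2$ respectively, which are precisely $C_1(0)$ and $C_1(1)$.

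For the coefficient of $|\theta_2-\theta_2'|$, I would apply the mean value theorem to $\theta \mapsto e^{-\theta t}$ to get the pointwise inequality $|e^{-\theta_2 t}-e^{-\theta_2' t}| \leq t\, e^{-(\theta_2\wedge\theta_2')t}|\theta_2-\theta_2'|$. Squaring and integrating yields $\int_0^{+\infty} t^{k+2} e^{-2(\theta_2\wedge\theta_2')t}\,\d t = (k+2)!/\bigl(2(\theta_2\wedge\theta_2')\bigr)^{k+3}$; after taking the square root, using $|\theta_1'| \leq r_1$ and $\theta_2 \wedge \theta_2' \geq 1/r_2$, one obtains exactly $C_2(0)=r_1 r_2^{3/2}/2$ (from $\sqrt{2/64}=1/(2\sqrt{2})$, combined with the $\sqrt{2}$ from $2!$) and $C_2(1)=\sqrt{3/8}\,r_1 r_2^2$ (from $\sqrt{6/32}=\sqrt{3/16}$, adjusted by the $1/\tilde{\theta}^2$ bound).

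There is no real obstacle; the proof is essentially a bookkeeping exercise with elementary gamma integrals. The only point requiring care is to track the constants so that the moments $k!/(2\theta)^{k+1}$ and $(k+2)!/(2\theta)^{k+3}$ match the stated $C_1(k)$ and $C_2(k)$ after taking square roots, and to ensure that all integrals on $(0,+\infty)$ converge, which they do since $\theta_2 \wedge \theta_2' \geq 1/r_2 > 0$ by assumption.
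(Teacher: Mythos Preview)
Your proof is correct and follows essentially the same approach as the paper: a triangle-inequality split into a $\theta_1$-variation term and a $\theta_2$-variation term, with the first term handled by the gamma integral $\int_0^\infty t^k e^{-2\theta t}\,\d t = k!/(2\theta)^{k+1}$. The only minor difference is in the $\theta_2$-variation: the paper computes $\int_0^\infty t^k\bigl(e^{-\theta_2 t}-e^{-\theta_2' t}\bigr)^2\,\d t$ exactly and then bounds the resulting rational expression in $\theta_2,\theta_2'$, whereas you bound pointwise via the mean value theorem before integrating; both routes land on the same constants $C_2(k)$, and yours avoids a little algebra.
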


\paragraph{Remark.} For the Duane model, Lemma~\ref{LemmeDuaneModelParametric} shows that Assumption~\ref{assumptionFRobuste} is  fulfilled for $\Theta = [-r_1,r_1] \times \left[- 1/2+ 1/r_2 , + \infty \right)$. This will allow to obtain a  risk bound when $\sqrt{s}$ is close to the class
 \begin{eqnarray*}
\F_{r_1,r_2} &=& \left\{(t,x) \mapsto a(x) t^{b(x)}, \text{where $a$ maps $\XX$ into $[-r_1,r_1]$ and } \right. \\
& & \left. \quad \quad \quad \quad \quad \quad \quad \text{$b$ maps $\XX$ into $[-1/2 + 1/r_2, +\infty)$} \right\}.
\end{eqnarray*}
By using Proposition~\ref{propmixing} with $\mathfrak{F} = \{ \F_{r_1,r_2},\, r_1,r_2 \in \N^{\star} \}$, we can also derive a risk bound for the class 
\begin{eqnarray*}
\F &=& \bigcup_{r_1,r_2 \in \N^{\star}} \F_{r_1,r_2}\\
&=& \left\{(t,x) \mapsto a(x) t^{b(x)}, \text{where $a$ maps $\XX$ into a compact subset of $\R$, } \right. \\
& & \qquad \left. \text{ and $b$ maps $\XX$ into a closed interval  included in $(-1/2, +\infty)$} \right\}.
\end{eqnarray*}

\subsection{A model selection theorem.} The main theorem of Section~\ref{sectionrobuste} is the following.
\begin{thm} \label{thmestimationrobuste}
Suppose that Assumption~\ref{assumptionFRobuste}  holds. Let $\W_1,\dots,\W_k$ be $k$ families of finite dimensional linear subspaces of $\L^2 (\XX,\nu_n)$. Let for each $j \in \{1,\dots,k\}$, $\Delta_j$ be a non-negative mapping on $\W_j$ such that $\sum_{W_j \in \W_j} e^{- \Delta_j (W_j)} \leq 1$.

There exists an estimator~$\hat{s}$ such that for all map $\boldsymbol{u} = (u_1,\dots,u_k)$ from $\XX$ with values into $\Theta$, and $f \in \F$  of the form $f(t,x) = f_{\boldsymbol{u} (x)} (t)$,
\begin{eqnarray*} \label{eqestmiationrobuste}
C \E\left[ H^2(s,\hat{s})  \right]  \leq    d_2^2 ( \sqrt{s} , f ) +  \sum_{j=1}^k \varepsilon_j (u_j) 
\end{eqnarray*}
where $\varepsilon_j (u_j)$ is defined by
\begin{eqnarray*}
\varepsilon_j (u_j) = \inf_{W_j \in \W_j} \left\{ R_{j}^2 \, \left(d_{\x} \left(u_j, W_j \right) \right)^{2 \alpha_j} +   \frac{\left(\dim (W_j) \vee 1 \right) \tau_{\boldsymbol{u}, j} (n) + \Delta_j (W_j)}{n} \right\}, 
\end{eqnarray*}
where $$\tau_{\boldsymbol{u}, j} (n) = \log  n  + \log (1 \vee  R_{j}) + \log \left( 1 \vee \|u_j\|_{\x} \right), $$ 
and where $C > 0$ depends only on $k$ and $\alpha_1,\dots , \alpha_k$.
\end{thm}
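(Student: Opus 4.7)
My plan is to apply Theorem~\ref{thmselectionmodelegeneral} to a collection $\V$ of models indexed by tuples $(W_1,\dots,W_k,l_1,\dots,l_k)$, where $W_j \in \W_j$ and $l_j \in \N$ is a dyadic scale bounding the $d_\x$-norm of a candidate approximation to $u_j$. For each such tuple I define
\[
V = \left\{ (t,x) \mapsto f_{\Pi_\Theta(w_1(x),\dots,w_k(x))}(t) : w_j \in W_j,\ \|w_j\|_\x \leq 2^{l_j} \right\},
\]
where $\Pi_\Theta$ is the coordinate-wise projection onto the closed rectangle~$\Theta$. Since $\Pi_\Theta$ is a $1$-Lipschitz contraction in each coordinate, Assumption~\ref{assumptionFRobuste} combined with Jensen's inequality (applied to the concave functions $y \mapsto y^{\alpha_j}$ on $[0,+\infty)$) yields, for any $\boldsymbol{w},\boldsymbol{w}'$ in the parameter set,
\[
d_2^2\left(f_{\Pi_\Theta\boldsymbol{w}(\cdot)}, f_{\Pi_\Theta\boldsymbol{w}'(\cdot)}\right) \leq k \sum_{j=1}^k R_j^2 \, d_\x(w_j, w_j')^{2\alpha_j}.
\]

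The key step is the metric-dimension bound for $V$ in the sense of Definition~\ref{definitionmetricspace}. Writing $D_j = \dim(W_j) \vee 1$, the Hölder estimate above shows that producing an $\eta$-net of $V$ reduces to, for each $j$, discretizing the $d_\x$-ball of radius $2^{l_j}$ in the $D_j$-dimensional linear space $W_j$ on a grid of mesh $\epsilon_j = (\eta/(kR_j))^{1/\alpha_j}$; the image in $V$ of the product grid has cardinality at most $\exp\left(\sum_j D_j \log(C \cdot 2^{l_j}/\epsilon_j)\right)$. Since this global cardinality a fortiori bounds $|S_V(\eta) \cap \B(\varphi, x\eta)|$, I obtain
\[
D_V(\eta) \leq C_1 \sum_{j=1}^k D_j \left( l_j + \alpha_j^{-1}\log_+(R_j/\eta) + 1 \right),
\]
with $C_1$ depending only on $k$. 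Solving $D_V(\eta_V)/\eta_V^2 \leq n$ then gives $\eta_V^2 \leq (C_2/n) \sum_j D_j ( l_j + \log n + \log(1 \vee R_j))$ for a constant $C_2$ depending on $k$ and the $\alpha_j$.

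I then choose the weights $\Delta(V) = \sum_{j=1}^k( \Delta_j(W_j) + l_j + 2\log(l_j+1) ) + \kappa$ for an appropriate universal constant~$\kappa$, so that $\sum_{V \in \V} e^{-\Delta(V)} \leq 1$ follows from the hypotheses on the~$\Delta_j$ and the convergence of $\sum_{l \geq 0} (l+1)^{-2} e^{-l}$. Theorem~\ref{thmselectionmodelegeneral} then provides an estimator~$\hat{s}$ with
\[
C\,\E\left[H^2(s,\hat{s})\right] \leq \inf_{V \in \V}\left\{ d_2^2(\sqrt{s}, V) + \eta_V^2 + \Delta(V)/n \right\}.
\]
Given $f(t,x) = f_{\boldsymbol{u}(x)}(t) \in \F$, for each $j$ I select $W_j \in \W_j$ nearly minimizing $\varepsilon_j(u_j)$, take $w_j$ to be the $d_\x$-orthogonal projection of $u_j$ onto $W_j$ (whence $\|w_j\|_\x \leq \|u_j\|_\x$), and set $l_j = \lceil \log_2(1 \vee \|u_j\|_\x)\rceil$. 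Since $\boldsymbol{u}(x) \in \Theta$ for every $x$, the contractivity of $\Pi_\Theta$ together with the coordinate-wise Hölder inequality above gives $d_2^2(f, V) \leq k \sum_j R_j^2\, d_\x(u_j, W_j)^{2\alpha_j}$, while the choice of $l_j$ ensures $\eta_V^2 + \Delta(V)/n \leq (C_3/n)\sum_j( D_j\tau_{\boldsymbol{u},j}(n) + \Delta_j(W_j))$. Combined with $d_2(\sqrt{s}, V) \leq d_2(\sqrt{s}, f) + d_2(f, V)$, this yields the announced bound.

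The main difficulty is the metric-dimension estimate: because the parametrization $\boldsymbol{w} \mapsto f_{\Pi_\Theta\boldsymbol{w}(\cdot)}$ is only Hölder (with possibly different exponents $\alpha_j \leq 1$), one cannot invert the inequality to bound parameter distances by $d_2$-distances, so the local cardinality appearing in Definition~\ref{definitionmetricspace} must be controlled by the global grid size. The careful balancing of the coordinate-wise meshes $\epsilon_j$ and the bookkeeping of the scales~$l_j$ is what produces the precise form of $\tau_{\boldsymbol{u},j}(n) = \log n + \log(1 \vee R_j) + \log(1 \vee \|u_j\|_\x)$.
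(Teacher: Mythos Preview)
Your proposal is correct and follows essentially the same route as the paper's proof. The paper also builds models indexed by $(W_1,\dots,W_k,\rho_1,\dots,\rho_k)$ with $\rho_j\in\N^{\star}$ playing the role of your dyadic radii $2^{l_j}$, uses the coordinate-wise projection onto $\Theta$ (denoted $\pi$ there), bounds the metric dimension of each model by the \emph{global} cardinality of a product of coordinate-wise $\epsilon_j$-nets with $\epsilon_j=(\eta/(kR_j))^{1/\alpha_j}$ (their Proposition~\ref{propdimensionmetricrobuste}), solves $D_V(\eta_V)\le n\eta_V^2$ via an auxiliary lemma, assigns weights $\Delta(V)=\sum_j(\Delta_j(W_j)+\log(2\rho_j^2))$, and finally takes $\rho_j=\lceil \|u_j\|_{\x}\rceil$ with $v_j$ the orthogonal projection of $u_j$ onto $W_j$; the only differences from your argument are notational (integers versus dyadic scales, and the precise form of the summable weight on the radius index).
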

Roughly speaking, this result says that the risk bound we get  when $\sqrt{s}$ is of the form $\sqrt{s (t, x)} = f_{\boldsymbol{u} (x)} (t)$, corresponds to the one we would get if we could apply a model selection theorem on the components $u_1,\dots , u_k$  separately.
Each term $\varepsilon_j (u_j) $ can be controlled under  structural or smoothness assumptions on $u_j$.
 For instance, if $\XX = [0,1]^{k_2}$ and if $u_j$ is assumed to belong to the class $\F_j =   \mathcal{H} ([0,1]^{k_2}) $, a suitable choice of $(\W_j$, $\Delta_j)$ leads to 
\begin{eqnarray*}
C_j \varepsilon_j (u_j) \leq \left(R_{j} L(u_j)^{\alpha_j}\right)^{\frac{2 k_2}{k_2 + 2 \alpha_j \bar{\boldsymbol{\beta}}_j}} \left(\frac{\tau_{\boldsymbol{u}, j} (n) }{ n} \right)^{\frac{2 \alpha_j \bar{\boldsymbol{\beta}}_j}{2\alpha_j \bar{\boldsymbol{\beta}}_j + k_2}}  +   \frac{\tau_{\boldsymbol{u}, j} (n) }{ n} 
\end{eqnarray*}
where $\boldsymbol{\beta}_j$ is such that $u_j \in \mathcal{H}^{\boldsymbol{\beta}_j }([0,1]^{k_2})$ and where $C_j > 0$ depends only on~$k_2$ and~$\boldsymbol{\beta}_j$.
In particular, if $\alpha_j  = 1$ and if $n$ is large, $\varepsilon_j (u_j)$ is of order $ (\log n / n)^{2 \bar{\boldsymbol{\beta}}_j / (2 \bar{\boldsymbol{\beta}}_j + k_2)}$. Apart from the logarithmic factor, this corresponds to the estimation rate of an Hölderian function on $[0,1]^{k_2}$.

The corollary below illustrates this result for the Duane model.
\begin{cor} \label{corrobustepowerlaw}
There exists an estimator $\hat{s}$ such that, for all $\boldsymbol{\alpha} \in (0,+\infty)^{k_2}$, $\boldsymbol{\beta} \in (0,+\infty)^{k_2}$,  for all  $a \in \mathcal{H}^{\boldsymbol{\alpha}} ([0,1]^{k_2})$, $b \in \mathcal{H}^ {\boldsymbol{\beta}} ([0,1]^{k_2})$ satisfying $b > -1/2$, and for all function~$f$ of the form $f(t,x) = a(x) t^{b(x)}$,
\begin{eqnarray*}
C \E\left[ H^2(s,\hat{s}) \right] &\leq&  d_2^2 \left(\sqrt{s} ,   f \right)  \\
& &   +  \left( \frac{ 1 }{1 \wedge \inf_{x  \in [0,1]^{k_2}} (2 b(x ) + 1 ) }\right)^{\frac{k_2}{2 \bar{\boldsymbol{\alpha}} + k_2}}  L(a)^{\frac{2 k_2 } {2 \bar{\boldsymbol{\alpha}}+k_2}}  \left(\frac{\log  n}{n}\right)^{\frac{2 \bar{\boldsymbol{\alpha}}}{2 \bar{\boldsymbol{\alpha}} + k_2}}   \\
& &   +\left( \frac{ 1 \vee \|a\|_{\infty}^2}{1 \wedge \inf_{x  \in [0,1]^{k_2}} (2 b(x ) + 1 )^3 }\right)^{\frac{k_2}{2 \bar{\boldsymbol{\beta}} + k_2}} L(b)^{\frac{2 k_2 } {2 \bar{\boldsymbol{\beta}}+k_2}}  \, \left(\frac{\log  n}{n}\right)^{\frac{2 \bar{\boldsymbol{\beta}}}{2 \bar{\boldsymbol{\beta}} + k_2}} \\
& &   + C'  \frac{\log n }{n} 
\end{eqnarray*}
where $C > 0$ depends on $k_2$, $\max_{1 \leq j \leq k} \alpha_j$, $\max_{1 \leq j \leq k} \beta_j$, and where  $C'$ depends on $L(a)$, $L(b)$, $\bar{\boldsymbol{\alpha}}$, $\bar{\boldsymbol{\beta}}$, $\|a\|_{\infty}$, $\|b\|_{\infty}$ and $\inf_{x \in [0,1]^{k_2}} (2 b(x) + 1)$.
\end{cor}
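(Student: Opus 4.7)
The plan is to combine Theorem~\ref{thmestimationrobuste} with the mixing device Proposition~\ref{propmixing}, exactly as indicated in the remark following Lemma~\ref{LemmeDuaneModelParametric2}. For each $(r_1, r_2) \in (\N^\star)^2$, Lemma~\ref{LemmeDuaneModelParametric} shows that the Duane parametrization $f_{\ttheta}(t) = \theta_1 t^{\theta_2}$ satisfies Assumption~\ref{assumptionFRobuste} on the closed rectangle $\Theta_{r_1,r_2} = [-r_1, r_1] \times [-1/2 + 1/r_2, +\infty)$ with $\alpha_1 = \alpha_2 = 1$, $R_1 = r_2^{1/2}$ and $R_2 = \sqrt{2}\, r_1 r_2^{3/2}$, so that Theorem~\ref{thmestimationrobuste} applies to the corresponding class $\F_{r_1, r_2}$.

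I would then instantiate Theorem~\ref{thmestimationrobuste} with $\W_1$ and $\W_2$ both equal to a countable collection of piecewise polynomial linear subspaces of $\L^2([0,1]^{k_2}, \nu_n)$ (over dyadic partitions and with degrees exhausting $\N$), weighted by $\Delta_j(W) \lesssim 1 + \dim W$. The approximation results of Dahmen (1980) cited in Section~\ref{sectionhypclassiques} provide, for every $u \in \mathcal{H}^{\boldsymbol{\gamma}}([0,1]^{k_2})$ and every $D \in \N^\star$, a space $W \in \W_j$ with $\dim W \leq D$ such that $d_\x(u, W) \lesssim L(u)\, D^{-\bar{\boldsymbol{\gamma}}/k_2}$. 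Since $a$, $b$, $r_1$, $r_2$ do not depend on $n$, the factor $\tau_{\boldsymbol{u}, j}(n)$ is of order $\log n$ up to constants absorbed into $C'$, so with $\alpha_j = 1$ the bias--variance balance
\begin{equation*}
\varepsilon_j(u_j) \lesssim \inf_{D \geq 1} \Bigl\{ R_j^2 L(u_j)^2 D^{-2\bar{\boldsymbol{\gamma}}_j/k_2} + \frac{D \log n}{n} \Bigr\} \lesssim \bigl(R_j L(u_j)\bigr)^{\frac{2 k_2}{2\bar{\boldsymbol{\gamma}}_j + k_2}} \Bigl(\frac{\log n}{n}\Bigr)^{\frac{2\bar{\boldsymbol{\gamma}}_j}{2\bar{\boldsymbol{\gamma}}_j + k_2}} + \frac{\log n}{n}
\end{equation*}
gives the desired control on each component of $\boldsymbol{u} = (a, b)$.

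Finally, I would invoke Proposition~\ref{propmixing} with the countable collection $\mathfrak{F} = \{\F_{r_1, r_2}, (r_1, r_2) \in (\N^\star)^2\}$ and weights $\bar{\Delta}(\F_{r_1, r_2}) \lesssim 1 + \log r_1 + \log r_2$, and choose $r_1 = \lceil \|a\|_\infty \vee 1 \rceil$ and $r_2 = \lceil 2 / (1 \wedge \inf_{x}(2 b(x) + 1)) \rceil$, which guarantees $(a(x), b(x)) \in \Theta_{r_1, r_2}$ for every $x \in [0,1]^{k_2}$. With these choices $R_1^2 = r_2 \asymp 1/(1 \wedge \inf_x(2b(x) + 1))$ and $R_2^2 = 2 r_1^2 r_2^3 \asymp (1 \vee \|a\|_\infty^2)/(1 \wedge \inf_x(2 b(x) + 1))^3$; substituting these into the bound for $\varepsilon_1(a)$ (with $\bar{\boldsymbol{\gamma}}_1 = \bar{\boldsymbol{\alpha}}$) and for $\varepsilon_2(b)$ (with $\bar{\boldsymbol{\gamma}}_2 = \bar{\boldsymbol{\beta}}$) reproduces the first two rate terms of the statement, while the mixing penalty $\bar{\Delta}(\F_{r_1, r_2})/n$ together with the residual $(\log n)/n$ terms collapse into the final $C' (\log n)/n$.

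The main obstacle is purely bookkeeping: tracking how the $(r_1, r_2)$-dependence of $R_1, R_2$ propagates through the exponents $2 k_2 / (2\bar{\boldsymbol{\gamma}}_j + k_2)$ to reproduce the explicit prefactors in the statement, and verifying that all the $a, b$-dependent logarithms inside $\tau_{\boldsymbol{u}, j}(n)$ and $\bar{\Delta}(\F_{r_1, r_2})$ can be safely absorbed into $C'$. No ingredient beyond Theorem~\ref{thmestimationrobuste}, Proposition~\ref{propmixing} and the cited approximation result is required.
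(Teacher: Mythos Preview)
Your proposal is correct and follows essentially the same route as the paper's proof: apply Theorem~\ref{thmestimationrobuste} for each fixed $(r_1,r_2)$ using Lemma~\ref{LemmeDuaneModelParametric} to supply the constants $R_1, R_2$, optimize the bias--variance trade-off in $\varepsilon_1(a)$ and $\varepsilon_2(b)$ via standard Hölder-approximating linear spaces, then remove the dependence on $(r_1,r_2)$ via Proposition~\ref{propmixing} and finally choose $r_1 \asymp \|a\|_\infty$ and $r_2 \asymp 1/\inf_x(2b(x)+1)$. The only cosmetic difference is that the paper cites Proposition~1 of \cite{BaraudComposite} for the approximating collections $\W_1,\W_2$ rather than invoking \cite{Dahmen1980} directly, and it makes the intermediate simplification $1 + 2/\inf_x(2b(x)+1) \leq 3/(1 \wedge \inf_x(2b(x)+1))$ explicit.
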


\subsection{Change point detection.}
In the case where the intensity $s_i$ of each $N_i$ is of the form $\sqrt{s_i (t)} = f_{\ttheta_i} (t)$, a natural way to control the risk of our estimator $\hat{s}$ is to consider some assumptions on the map $i \mapsto \ttheta_i$. This problem amounts to choosing suitable collections  $\W_1,\dots, \W_k$ to approximate functions on $\XX = \{1,\dots, n\}$. 

In this section, we focus on the case where the map   $i \mapsto \ttheta_i$ is piecewise constant with a small number of jumps.
Let $\mathcal{P}$ be the set of partitions of $\{1,\dots,n\}$ into intervals. We aim at estimating $s$ when 
there exists a partition $P_0 \in \mathcal{P}$ such that $s$ is of the form 
\begin{eqnarray} \label{eqformschangepoint}
\forall I \in P_0, \exists \ttheta_I \in \Theta, \forall i \in I, \quad  \sqrt{s_i (t)} = f_{\ttheta_I}  (t)  \quad \text{for all $t \in \TT$.}
\end{eqnarray}
We define  for each partition $P \in \mathcal{P}$, the linear space  of piecewise constant functions
$$W_P = \left\{\sum_{I \in P} a_I \1_I , \, a_I \in \R \right\} $$
and apply Theorem~\ref{thmestimationrobuste} with the collections  and  maps  defined by 
$$\forall j \in \{1,\dots, k\}, \; \W_j = \left\{ W_P , \, P \in \mathcal{P} \right\} \; \text{and} \; \Delta_j (W_P) = |P| + \log \binom {n-1}{|P|-1}.$$
This leads to the result below.
\begin{cor}  \label{corchangepoint}
Assume that Assumption~\ref{assumptionFRobuste}  and relation~{(\ref{eqformschangepoint})} hold.  
There exists an estimator~$\hat{s}$ such that
\begin{eqnarray*}
C \E\left[ H^2(s,\hat{s})  \right]  \leq    |P_0|   \frac{\log n +  C'}{n},
\end{eqnarray*}
where $C > 0$ depends only on $k$ and $\alpha_1,\dots , \alpha_k$, where $C'$ is given by
$$C' = \sup_{1 \leq j  \leq k} \left( \log \left( 1 + R_j \right) \right) + \sup_{I \in P} \left( \log \left(1 +  \| \ttheta_I \|_{\infty}  \right) \right),$$
and where $\| \ttheta_I \|_{\infty} = \sup_{1 \leq j \leq k} |(\ttheta_I)_j|$.
\end{cor}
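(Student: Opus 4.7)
The plan is to simply specialize Theorem~\ref{thmestimationrobuste} to the proposed choices of $\W_j$ and $\Delta_j$, and then verify that $(\Delta_j)$ defines a sub-probability and that the right-hand side collapses to the announced bound when the true intensity has the piecewise-constant form~{(\ref{eqformschangepoint})}.

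First, I would check that the weights $\Delta_j(W_P) = |P| + \log\binom{n-1}{|P|-1}$ satisfy $\sum_{W_P \in \W_j} e^{-\Delta_j(W_P)} \le 1$. Grouping partitions by cardinality and using the classical bijection between partitions of $\{1,\dots,n\}$ into $p$ intervals and subsets of $\{1,\dots,n-1\}$ of size $p-1$, the number of partitions with $|P|=p$ is exactly $\binom{n-1}{p-1}$, so the sum reduces to $\sum_{p=1}^{n} e^{-p} \le 1$. This legitimizes the application of Theorem~\ref{thmestimationrobuste}.

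Next, under~{(\ref{eqformschangepoint})} I would take $f(t,x_i) = f_{\ttheta_{I(i)}}(t)$, where $I(i)$ is the unique element of $P_0$ containing $i$. Then $d_2^2(\sqrt{s},f)=0$, and the coordinate maps $u_j : x_i \mapsto (\ttheta_{I(i)})_j$ are piecewise constant on $P_0$, hence belong to $W_{P_0}$. Choosing $W_j = W_{P_0}$ in the infimum defining $\varepsilon_j(u_j)$ kills the approximation term $R_j^2 d_{\x}(u_j,W_j)^{2\alpha_j}$; since $\dim W_{P_0} = |P_0|$ and $\Delta_j(W_{P_0}) = |P_0| + \log\binom{n-1}{|P_0|-1}$, we obtain
\begin{equation*}
\varepsilon_j(u_j) \;\le\; \frac{|P_0|\, \tau_{\boldsymbol{u},j}(n) \;+\; |P_0| \;+\; \log\binom{n-1}{|P_0|-1}}{n}.
\end{equation*}

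Finally, I would absorb the combinatorial term via $\log\binom{n-1}{|P_0|-1} \le |P_0|\log(en)$ and bound $\|u_j\|_{\x} \le \sup_{I \in P_0} \|\ttheta_I\|_\infty$, so that $\tau_{\boldsymbol{u},j}(n) \le \log n + \log(1+R_j) + \log(1+\sup_I\|\ttheta_I\|_\infty)$. Summing the $k$ contributions, the factor $k$ is absorbed into the constant $C$ (which Theorem~\ref{thmestimationrobuste} already allows to depend on $k$ and the $\alpha_j$), and the bound takes the announced form $C\,\E[H^2(s,\hat s)] \le |P_0|\,(\log n + C')/n$ with $C' = \sup_j \log(1+R_j) + \sup_I \log(1+\|\ttheta_I\|_\infty)$. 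There is no real obstacle here; the only delicate point is to ensure the combinatorial weight $\log\binom{n-1}{|P_0|-1}$ is indeed dominated by $|P_0|\log n$ up to a constant, which is immediate from the standard estimate.
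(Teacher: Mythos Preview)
Your proposal is correct and follows exactly the approach the paper takes: the paper simply states that the corollary ensues from applying Theorem~\ref{thmestimationrobuste} with the indicated collections $\W_j=\{W_P,\,P\in\mathcal P\}$ and weights $\Delta_j(W_P)=|P|+\log\binom{n-1}{|P|-1}$, without spelling out the details. Your verification of the sub-probability condition, the choice $W_j=W_{P_0}$ killing the approximation term, and the bookkeeping on $\tau_{\boldsymbol u,j}(n)$ and $\log\binom{n-1}{|P_0|-1}$ are precisely the steps needed to make that application explicit.
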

For illustration purpose, in the context of the Duane model, there exist  $a_1\dots, a_n \in (0,+\infty)$, and  $b_1,\dots, b_n \in (-1/2, + \infty)$ such that $\sqrt{s_i(t)} = a_i t^{b_i}$ for all $t \in (0,1]$. By combining the preceding corollary with Proposition~\ref{propmixing}, we build an estimator $\hat{s}$ such that
\begin{eqnarray*}
C \E\left[ H^2(s,\hat{s})  \right]  \leq  \left(1 + r_1 + r_2 \right) \frac{\log n + C'}{n}  
\end{eqnarray*}
where $r_1$ and $r_2$ are the numbers  of jumps of the maps $i \mapsto a_i$ and $i \mapsto b_i$ respectively, where~$C$ is an universal positive constant, and where $C'$ depends on $\sup_{1 \leq i \leq n} a_i$, $\sup_{1 \leq i \leq n} |b_i|$ and $\inf_{1 \leq i \leq n} (2 b_i + 1)$.

The preceding collections $\W_1,\dots, \W_k$ can also be used to approximate the map $i \mapsto \ttheta_i$ under other assumptions such as smoothness ones. For instance, an approximation theorem for monotone functions on $\{1,\dots,n\}$ can be found in~\cite{BaraudBirgeHistogramme} and can be used to deal with the situation where some components of the map $i \mapsto \ttheta_i$ are monotone.

\section{Proofs} \label{sectionpeuves}

\subsection{Proof of Theorem~\ref{thmselectionmodelegeneral}.}
Throughout the proof, we set $\mathbf{N} = (N_1,\dots,N_n)$ and $\mathbf{x} = (x_1,\dots,x_n)$.

\subsubsection{About the $T$-estimators.} \label{SectionAboutTEstimators}
We begin to briefly recall the general strategy  introduced in~\cite{BirgeTEstimateurs} to build estimators from tests. 

Given two  distinct functions $f, f'$ of  $\L_{+}^1 (\TT \times \XX, M)$, a test function  $\psi_{f,f'} (\mathbf{N}, \mathbf{x})$ is  a measurable function with values in $\{f,f'\}$. The convention is that $\psi_{f,f'} (\mathbf{N}, \mathbf{x}) = f$ means accepting $f$ whereas $\psi_{f,f'} (\mathbf{N}, \mathbf{x}) = f'$ means accepting $f'$.
In what follows, we need tests with the following properties. We shall build them in Section~\ref{SectionConstructionTest}.

\begin{hyp} \label{hypotheseExistenceTests}
There exist $a > 0$, $\kappa > 0$ such that for  all distinct functions $f, f' \in   \L_{+}^1 (\TT \times \XX, M)$ and all $z \in \R$, there exists a test $\psi_{f,f'}^{(z)} (\mathbf{N}, \mathbf{x})$ satisfying
\begin{eqnarray}
\sup_{\substack{f \in  \L_{+}^1 (\TT \times \XX, M), \\ \kappa H (s, f) \leq H (f,f')}} \P \left[  \psi_{f,f'}^{(z)} (\mathbf{N}, \mathbf{x}) = f'\right] &\leq&  \exp \left[- a n \left(H^2 (f,f') + z \right) \right] \label{Eq1Test} \\
\sup_{\substack{f \in  \L_{+}^1 (\TT \times \XX, M), \\ \kappa H (s, f') \leq H (f,f')}} \P \left[  \psi_{f,f'}^{(z)} (\mathbf{N}, \mathbf{x}) = f\right] &\leq& \exp \left[- a n \left(H^2 (f,f') - z \right) \right] \label{Eq2Test}.
\end{eqnarray}
\end{hyp}
We now consider an at most countable collection $\S$ of subsets of  $\LL^1_+ (\TT \times \XX, M)$. We shall assume that the sets $S \in \S$ are $D$-models. We recall the definition below.
\begin{definition}
 A subset $S$ of $\LL^1_+ (\TT \times \XX, M)$ is called a $D$-model with parameters $\bar{\eta}_S$, $\bar{D}_S$ and~$1$ if
$$\left|S \cap \B(f, x \bar{\eta}_S) \right| \leq \exp \left[\bar{D}_S x^2 \right] \quad \text{for all $x \geq 2$ and $f \in \LL^1_+ (\TT \times \XX, M)$,}$$
where $\B (f, x \bar{\eta}_S)$ is the closed ball centered at $f$ with radius $x \bar{\eta}_S$ of the metric space $(\LL^1_+ (\TT \times \XX, M), H)$.
\end{definition}
The tests allow  to select among the functions of $\cup_{S \in \S} S$. Precisely, the  selection rule  is the following.

Given a collection $\S$ of $D$-models, we set for all $f \in \cup_{S \in \S} S$, 
$$\bar{\eta} (f) = \inf \left\{ \bar{\eta}_S , \, S \in \S, \, S \ni f \right\}$$
and for all $f' \in \cup_{S \in \S} S$,  $f' \neq f$, $z_{f,f'} = \bar{\eta} (f')^2 - \bar{\eta}(f)^2$.
We define for all $f \in \cup_{S \in \S} S$, 
$$\mathcal{R} (f) = \left\{f' \in \cup_{S \in \S} S , \; \psi_{f,f'}^{(z_{f,f'})} (\mathbf{N}, \mathbf{x}) = f'  \right\} $$
and consider
$$
\gamma(f) =  \left \{
\begin{array}{l l}
     \sup \left\{ H^2 (f,f'), \; f' \in \mathcal{R} (f) \right\} & \quad \text{if $\mathcal{R} (f) \neq \emptyset$,} \\
    0 & \quad \text{if $\mathcal{R} (f) = \emptyset$.} \\
\end{array}
\right.$$
Given $\varepsilon > 0$,  a $T_{\varepsilon}$-estimator is a measurable function $\hat{s} = \hat{s}  (\mathbf{N}, \mathbf{x})$ with values in  $\cup_{S \in \S} S$ such that
$$\gamma  (\hat{s}) \vee \varepsilon \bar{\eta} (\hat{s}) = \inf_{ f \in\cup_{S \in \S} S} \left[ \gamma(f) \vee \varepsilon \bar{\eta}(f) \right].$$

Theorem~5 of~\cite{BirgeTEstimateurs} shows that such a minimizer exists almost surely and they all possess similar theoretical properties. In our framework, we can rewrite it as follows.

\begin{thm} \label{TheoremTEstimateur}
Suppose that Assumption~\ref{hypotheseExistenceTests} holds.
Let  $\S$ be an at most countable collection of $D$-models such that $\bar{D}_{S} \geq 1/2$ for all $S \in \S$, 
$$\sum_{S \in \S} \exp \left(- \frac{a n  \bar{\eta}_S^2 }{21} \right) \leq 1 \quad \text{and} \quad a n \bar{\eta}_S^2 \geq \frac{21 \bar{D}_S}{5} \quad \text{for all $S \in \S$.}$$
For all $\varepsilon \in (0,4]$, there exists almost surely a $T_{\varepsilon}$-estimator~$\hat{s} \in \L^1_+ (\TT \times \XX, M)$. Moreover, any of them satisfies  
\begin{eqnarray*}\label{eqnoraclegeneralnonintegree}
\P \left[ C H^2(s, \hat{s})  \geq   \inf_{S \in \S} \left\lbrace H^2 \left(s,S \right)   + \bar{\eta}_S^2 \right\rbrace + \xi \right] \leq e^{-n  \xi} \quad \text{for all $\xi > 0$,}   
\end{eqnarray*}
where $C > 0$ depends only on $a, \kappa$.
\end{thm}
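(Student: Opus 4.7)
The plan is to invoke Theorem~5 of \cite{BirgeTEstimateurs} essentially verbatim, after verifying that all of its hypotheses are met in the present Poissonian setting. Birgé's abstract $T$-estimator construction only requires three ingredients: a pseudo-metric space in which to estimate, a family of pairwise tests with exponential error bounds, and a countable collection of $D$-models equipped with weights. Our setting supplies each one: the cone $\L^1_+(\TT\times\XX,M)$ is metrized by the Hellinger distance $H$; the tests $\psi_{f,f'}^{(z)}(\mathbf{N},\mathbf{x})$ provided by Assumption~\ref{hypotheseExistenceTests} with constants $a,\kappa$ meet Birgé's testing hypothesis after identifying his quantity $a_0$ with our $an$; and the collection $\S$ comes with parameters $\bar{D}_S,\bar{\eta}_S$ fulfilling the summability and lower bound stated in the theorem.

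First I would check the existence and measurability of a minimizer of $f\mapsto \gamma(f)\vee\varepsilon\bar{\eta}(f)$ over $\cup_{S\in\S}S$. Measurability follows from the countability of $\S$ combined with the fact that, by the $D$-model property, each ball $\B(f,x\bar{\eta}_S)\cap S$ has a finite, controlled cardinality, so that $\gamma(f)$ is the supremum of countably many measurable quantities. Almost-sure existence of a minimizer is then an instance of the general selection argument in Section~6 of \cite{BirgeTEstimateurs}, which only uses the fact that the infimum is approached along a countable sequence of candidate points.

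Next I would feed these ingredients into Birgé's Theorem~5 and retrieve the deviation inequality. The numerical constants $21$ and $21/5$ in the summability condition $\sum_{S\in\S}\exp(-an\bar{\eta}_S^2/21)\leq 1$ and the dimension lower bound $an\bar{\eta}_S^2\geq 21\bar{D}_S/5$ are precisely those produced by Birgé's chaining-and-union-bound argument once one propagates the factor $an$ appearing in (\ref{Eq1Test})--(\ref{Eq2Test}) through the peeling of balls around each $D$-model. Birgé's conclusion then yields, for any $\xi>0$ and any $S\in\S$,
\begin{equation*}
\P\bigl[CH^2(s,\hat{s})\geq H^2(s,S)+\bar{\eta}_S^2+\xi\bigr]\leq e^{-an\xi},
\end{equation*}
with $C$ depending only on $a$ and $\kappa$; taking the infimum over $S\in\S$ inside the probability (which is valid because the right-hand side does not depend on $S$ after moving $\inf_S$ out) and absorbing $a$ into $C$ gives the statement.

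The main obstacle is purely one of bookkeeping of constants. The cited theorem of Birgé is stated in a slightly different normalization (notably, the factor $n$ does not appear explicitly in his testing hypothesis), so one must carefully track how our constants $a$ and $\kappa$ rescale the exponents in each step of his proof — peeling, chaining, and the final union bound over $\S$ — to arrive at the specific numerical conditions displayed here. No new probabilistic ingredient is needed beyond Assumption~\ref{hypotheseExistenceTests}, whose construction is deferred to Section~\ref{SectionConstructionTest}.
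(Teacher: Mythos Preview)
Your approach matches the paper's exactly: the paper does not prove this theorem but simply presents it as a restatement of Theorem~5 of \cite{BirgeTEstimateurs} in the present framework, noting that the tests and the collection $\S$ remain to be constructed. Your proposal correctly identifies that the only work is checking that Assumption~\ref{hypotheseExistenceTests} and the $D$-model conditions feed directly into Birg\'e's abstract result, with the constants $21$ and $21/5$ inherited from his normalization.
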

It remains thus to construct the tests and the collection $\S$ to prove Theorem~\ref{thmselectionmodelegeneral}.

\subsubsection{Definition of the tests} \label{SectionConstructionTest}
Our tests are inspired from the variational formula in~\cite{BaraudMesure}. Let, for all functions $f, f'$ of $\LL^1_+ (\TT \times \XX, M)$, $T_{f,f'} (\mathbf{N},\mathbf{x})$ be the functional
\begin{eqnarray*}\label{test}
T_{f,f'} (\mathbf{N},\mathbf{x}) &=&  \frac{1}{2 n}  \sum_{i=1}^n   \int_{\TT} \sqrt{\frac{f (t,x_i) +f' (t,x_i)}{2}} \left( \sqrt{f' (t,x_i)} - \sqrt{f (t,x_i)}  \right) \d \mu (t)  \\
& & \quad +  \frac{1}{\sqrt{2} n}  \sum_{i=1}^n    \int_{\TT} \frac{\sqrt{f' (t, x_i)} - \sqrt{f (t, x_i)} }{\sqrt{f (t,x_i) + f' (t,x_i)}} \d N_i(t)   \\ 
& & \quad - \frac{1}{2 n}  \sum_{i=1}^n    \int_{\TT}   \left( f' (t, x_i) - f (t, x_i)  \right)  \d \mu(t)
\end{eqnarray*}
where the convention $0/0$ is in use.
We prove the following.

\begin{lemme} \label{lemmatestrobuste}
There exist positive numbers $a,b$ such that for all $z \in \R$, and all $f, f' \in \LL^1_+ (\TT \times \XX, M)$ satisfying $4 H(s, f) \leq H(f,f')$,
$$\P \left[ T_{f,f'} (\mathbf{N},\mathbf{x})\geq b z \right] \leq \exp \left[ -n  a  \left( H^2(f,f') + z \right) \right].$$
\end{lemme}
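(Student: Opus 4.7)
The plan is to establish the bound by a Chernoff-type argument. Start from Markov's inequality: for any $\lambda > 0$,
$$\P[T_{f,f'}(\NN,\mathbf{x}) \geq bz] \leq e^{-\lambda b z}\,\E\bigl[e^{\lambda T_{f,f'}(\NN,\mathbf{x})}\bigr],$$
so everything reduces to controlling $\log \E[e^{\lambda T_{f,f'}}]$. Since the $N_i$ are independent, this log-Laplace transform splits as a sum over $i$ of terms of the form $\lambda A_i/n + \log \E[\exp((\lambda/(\sqrt{2}n))\int \phi_i\, dN_i)]$, where $\phi_i(t) = (\sqrt{f'(t,x_i)}-\sqrt{f(t,x_i)})/\sqrt{f(t,x_i)+f'(t,x_i)}$ and $A_i$ collects the deterministic drift contributed by $N_i$. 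The Laplace functional of a Poisson process with intensity $s(\cdot,x_i)\d\mu$ then rewrites $\log \E[\exp(\gamma\int \phi_i\, dN_i)] = \int (e^{\gamma \phi_i}-1)\, s(\cdot,x_i)\,\d\mu$ with $\gamma = \lambda/(\sqrt{2}n)$.

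Next I would choose $\lambda = c\,n$ with $c$ a small absolute constant to be fixed at the end, so that $\gamma = c/\sqrt{2}$ is a constant and $|\gamma\phi_i| \leq c/\sqrt{2}$ (using the elementary inequality $|\sqrt{f'}-\sqrt{f}| \leq \sqrt{f+f'}$). On that bounded range one has $e^x - 1 \leq x + Kx^2$ for a constant $K = K(c)$. Plugging this in gives
$$\log \E[e^{\lambda T_{f,f'}}] \leq \lambda\,T_{\mathrm{det}} + \gamma \sum_{i=1}^n \int \phi_i \,s(\cdot,x_i)\,\d\mu + K\gamma^2 \sum_{i=1}^n \int \phi_i^2\, s(\cdot,x_i)\,\d\mu,$$
where $T_{\mathrm{det}}=n^{-1}\sum_i A_i$ is the deterministic part of the statistic.

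The algebraic core of the argument is to combine the first two (linear-in-$\phi_i$) terms. Writing $g_i=\sqrt{f(\cdot,x_i)}$, $g'_i=\sqrt{f'(\cdot,x_i)}$, $h_i=\sqrt{s(\cdot,x_i)}$, $u_i=\sqrt{(g_i^2+(g'_i)^2)/2}$, and using $f'-f=(g'_i-g_i)(g'_i+g_i)$ together with the identity $u_i-(g_i+g'_i)/2 = (g'_i-g_i)^2/(4u_i+2(g_i+g'_i))$, one finds that this linear combination vanishes when $h_i \equiv g_i$ and, in general, is bounded by $\alpha_1 n H^2(s,f) - \alpha_2 n H^2(f,f')$ for suitable absolute constants $\alpha_1,\alpha_2>0$ (Cauchy--Schwarz converting a $(g_i'-g_i)(h_i-g_i)$ cross-term into a product of Hellinger-type distances). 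For the quadratic remainder, the inequality $h_i^2 \leq 2g_i^2 + 2(h_i-g_i)^2$ combined with $g_i^2 \leq 2u_i^2$ and $\int \phi_i^2 u_i^2\,\d\mu = \tfrac12 \int(g'_i-g_i)^2\,\d\mu$ yields $\sum_i \int \phi_i^2 s(\cdot,x_i)\,\d\mu \leq 4n\,H^2(f,f') + 4n\,H^2(s,f)$.

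Collecting the three contributions gives $\log \E[e^{\lambda T_{f,f'}}] \leq -\beta_1(c)\,n H^2(f,f') + \beta_2(c)\,n H^2(s,f)$ with $\beta_1(c)>0$ provided $c$ is small enough that the quadratic error does not swamp the gain from the linear cancellation. The hypothesis $16\,H^2(s,f) \leq H^2(f,f')$ is then used to absorb the second term into a fraction of the first, leaving $\log \E[e^{\lambda T_{f,f'}}] \leq -2an\,H^2(f,f')$ for some $a>0$. Plugging back into Markov and setting $b = a/c$ gives $\P[T_{f,f'}\geq bz] \leq \exp[-\lambda b z - 2an H^2(f,f')] \leq \exp[-an(H^2(f,f')+z)]$, which is the claimed bound. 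The main obstacle is precisely Step three: showing by careful algebra that the deterministic drift $T_{\mathrm{det}}$ is the \emph{exact} term needed to cancel the $\gamma \sum_i \int \phi_i s(\cdot,x_i)\d\mu$ expectation up to a residue controlled by $H^2(s,f)$; everything else is routine Chernoff book-keeping once that identification is in hand.
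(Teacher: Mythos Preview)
Your proposal is correct and rests on the same two ingredients as the paper's proof: (i) a Chernoff-type tail bound for the Poisson integrals of the bounded function $\phi_i=(\sqrt{f'}-\sqrt{f})/\sqrt{f+f'}$, and (ii) the algebraic fact that the expectation $\E[T_{f,f'}]$ is bounded above by a positive multiple of $H^2(s,f)$ minus a positive multiple of a Hellinger term. The paper organises these differently: it first isolates a Bennett inequality (Lemma~\ref{inegconcentrationpois}, obtained by optimising over $\lambda$), then invokes a ready-made bound $\E[T_{f,f'}]\le (1+1/\sqrt{2})H^2(s,f)-(1-1/\sqrt{2})H^2(s,f')$ borrowed from \cite{BaraudMesure}, and finally runs a short case analysis to pass from $H^2(s,f')$ to $H^2(f,f')$ via the triangle inequality and the hypothesis $4H(s,f)\le H(f,f')$. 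Your route is more direct: you fix $\lambda=cn$ from the start, combine the drift and the linear Poisson term into $\lambda\E[T_{f,f'}]$ in one stroke, and bound the expectation directly in terms of $H^2(s,f)$ and $H^2(f,f')$, which spares you the case analysis. What you call the ``main obstacle'' is precisely the content of the paper's Claim~\ref{ClaimInequaliteDansPreuvePoisson}; your Cauchy--Schwarz sketch is the right idea, and one can also recover your form of the bound from the paper's by using $H^2(s,f')\ge \tfrac12 H^2(f,f')-H^2(s,f)$. The trade-off is that the paper's optimisation yields explicit (if tiny) constants $a\approx 4.5\times10^{-4}$, $b\approx 0.029$, whereas your fixed-$\lambda$ argument is cleaner but gives somewhat looser constants.
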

The proof of this lemma is delayed to Section~\ref{SectionProoflemmaDModels} and we refer to the proof for the exact values of~$a$ and~$b$.

This lemma says that the functional $T_{f,f'} (\mathbf{N},\mathbf{x})$ can be used to construct  the tests. Precisely, we  set for all $z \in \R$, $f, f' \in \LL^1_+ (\TT \times \XX, M)$, $f \neq f' $,
\begin{equation*}
 \psi_{f,f'}^{(z)}  (\mathbf{N}, \mathbf{x}) =
  \begin{cases}
  f' & \text{if $T_{f,f'} (\mathbf{N},\mathbf{x})> b z$ } \\
   f & \text{if $T_{f,f'} (\mathbf{N},\mathbf{x})< b z$,} 
  \end{cases}
\end{equation*}
and $\psi_{f,f'}^{(z)}  (\mathbf{N}, \mathbf{x})$ is defined arbitrary  in case of equality. Thanks to the above lemma, (\ref{Eq1Test}) holds. Note that  (\ref{Eq2Test}) also holds since $T_{f,f'} (\mathbf{N},\mathbf{x}) = -T_{f',f} (\mathbf{N},\mathbf{x})$.

\subsubsection{Construction of $\S$}
The collection $\S$ is derived from~$\V$. We shall show in Section~\ref{SectionProoflemmaDModels} the following lemma.
\begin{lemme} \label{lemmaDModels}
For all $\eta > 0$ and $V \in \V$ there exists a $D$-model  $\bar{S}_V (\eta)$ with parameters $\eta$, $63 D_V (\eta/2)$ and $1$. Moreover,
\begin{eqnarray}\label{eqlemmapreuvetheoremprincipal}
H (s, \bar{S}_{V} (\eta)) \leq 2 \sqrt{2} \left( d_2 \left(\sqrt{s}, V \right) + \eta \right)
\end{eqnarray}
and for all $f \in \bar{S}_V (\eta)$, there exists $g \in V$ such that $\sqrt{f} = g \vee 0$.
\end{lemme}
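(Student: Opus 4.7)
My plan is to build $\bar{S}_V(\eta)$ as the set of positive-part-squares of approximate projections (onto $V$) of a net of $V$. Set $\eta' = \eta/2$ and invoke the metric dimension property to obtain a set $S_V(\eta')\subset\L^2(\TT\times\XX, M)$ that is an $\eta'$-net of $\L^2(\TT\times\XX, M)$ with $|S_V(\eta')\cap\B(\varphi, y\eta')|\leq \exp(D_V(\eta')y^2)$ for every $\varphi\in\L^2(\TT\times\XX, M)$ and $y\geq 2$. Retain those $g'\in S_V(\eta')$ with $d_2(g', V)\leq\eta'$, calling this subset $\tilde{S}$; for each $g'\in\tilde{S}$, fix an approximate projection $\pi(g')\in V$ satisfying $d_2(g', \pi(g'))\leq 2\eta'=\eta$. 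Define
\[
\bar{S}_V(\eta) = \{(\pi(g')\vee 0)^2 : g'\in\tilde{S}\} \subset \L^1_+(\TT\times\XX, M).
\]
The structural condition $\sqrt{f} = g\vee 0$ with $g = \pi(g')\in V$ is then built into the construction.

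Next I verify the approximation inequality~(\ref{eqlemmapreuvetheoremprincipal}). Fix $\varepsilon>0$, pick $f_V\in V$ with $d_2(\sqrt{s}, f_V)\leq d_2(\sqrt{s}, V)+\varepsilon$, and find $g'\in S_V(\eta')$ with $d_2(f_V, g')\leq\eta'$. Then $d_2(g', V)\leq\eta'$, so $g'\in\tilde{S}$; the triangle inequality gives $d_2(\sqrt{s}, \pi(g'))\leq d_2(\sqrt{s}, V)+2\eta+\varepsilon$. Since $\sqrt{s}\geq 0$, the pointwise inequality $|a\vee 0-b|\leq|a-b|$ valid for $b\geq 0$ yields $d_2(\sqrt{s}, \pi(g')\vee 0)\leq d_2(\sqrt{s}, \pi(g'))$. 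Using $\sqrt{2}H(s, f) = d_2(\sqrt{s}, \sqrt{f})$ and letting $\varepsilon\to 0$, I obtain $H(s, f)\leq 2\sqrt{2}(d_2(\sqrt{s}, V)+\eta)$ (with slack to spare, since the true factor on the right-hand side is only about $(1+2)/\sqrt{2}$).

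The main obstacle is establishing the ball-counting property $|\bar{S}_V(\eta)\cap\B(\varphi, x\eta)|\leq \exp(63 D_V(\eta/2) x^2)$ for $\varphi\in\L^1_+(\TT\times\XX, M)$ and $x\geq 2$, where the ball is taken with respect to $H$. The subtlety is that $h\mapsto h\vee 0$ is $1$-Lipschitz but collapses information (the negative part of a function in $\L^2$ is unconstrained by its positive part), so knowing $\pi(g')\vee 0$ lies close to $\sqrt{\varphi}$ does not directly constrain $\pi(g')$ or $g'$ in $\L^2$. My plan is to count through preimages in $\tilde{S}$: each $f\in\bar{S}_V(\eta)\cap\B(\varphi, x\eta)$ arises from at least one $g'\in\tilde{S}$ with $d_2(\pi(g')\vee 0, \sqrt{\varphi})\leq\sqrt{2}x\eta$, so the cardinality is bounded by the number of such $g'$'s. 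Exploiting $\sqrt{\varphi}\geq 0$ and the $1$-Lipschitz contraction of the positive-part map yields $d_2(g'\vee 0, \sqrt{\varphi})\leq \sqrt{2}x\eta + \eta$; combined with $d_2(g', \pi(g'))\leq\eta$ and a careful bookkeeping of the negative parts (using that retained $g'$ lie within $\eta'$ of $V$, which limits how far their negative parts can stretch without violating the metric-dimension control), this locates $g'$ inside an $\L^2$-ball of inflated radius $cx\eta'$ around a proxy centre near $\sqrt{\varphi}$. Applying the ball-counting property of $S_V(\eta')$ at this rescaled scale produces $\exp(D_V(\eta/2)c^2 x^2)$; tracking the inflation factors ($\sqrt{2}$ for the Hellinger-to-$\L^2$ conversion, $2$ for the scale passage $\eta\to\eta/2$, and the slack from the triangle inequality with the approximate projection) delivers the constant $63$ and closes the proof.
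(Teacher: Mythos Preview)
Your construction and the approximation inequality are fine; the genuine gap is in the $D$-model verification. Your plan is to bound $|\bar S_V(\eta)\cap\B_H(\varphi,x\eta)|$ by the number of $g'\in\tilde S$ whose image $(\pi(g')\vee 0)^2$ falls in the ball, and then to trap each such $g'$ in an $\L^2$-ball of radius $c x\eta'$ about some proxy centre. The second step fails: knowing $d_2(\pi(g')\vee 0,\sqrt{\varphi})\le \sqrt2\,x\eta$ gives no control whatsoever on $g'$ (or on $\pi(g')$) in $\L^2$, because the positive-part map annihilates the negative component. Your justification --- ``retained $g'$ lie within $\eta'$ of $V$, which limits how far their negative parts can stretch'' --- is not valid: $V$ is an arbitrary subset of $\L^2$ and may contain functions with arbitrarily large negative part, so proximity to $V$ constrains nothing. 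Concretely, one can have $g'_1,g'_2,\dots\in\tilde S$ with $\pi(g'_k)\vee 0$ all equal (or all within the Hellinger ball) while the $g'_k$ are pairwise far apart in $\L^2$; your preimage count then explodes even though the image count does not.

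The paper sidesteps this entirely by invoking two results from \cite{BirgeTEstimateurs}. First, Proposition~7 converts the abstract net $S_V(\eta)$ into a set $S_V'(\eta)\subset V$ that still $\eta$-approximates $V$ and satisfies the ball-counting bound with constant $7D_V(\eta/2)$. Second, Proposition~12 --- applied with $\bar\pi(f)=f\vee 0$ mapping $\L^2$ onto the cone of non-negative functions --- produces from $S_V'(\eta)$ a set $S_V''(\eta)$ of non-negative functions with ball-counting constant $63D_V(\eta/2)$ and approximation degraded only by a factor~$4$; one then sets $\bar S_V(\eta)=\{g^2:g\in S_V''(\eta)\}$. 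The passage through the non-injective map $f\mapsto f\vee 0$ while preserving ball-counting is exactly the content of Proposition~12, and it is not a bookkeeping exercise: it requires a genuine construction (not simply the pushforward) to avoid the collapse-of-information problem you ran into. If you want a self-contained proof, you must reproduce that argument rather than assert that negative parts are controlled.
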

Please note that we can assume (for the sake of simplicity and with no loss of generality), that $D_V$ is non-increasing.
We then set for all $V \in \V$,
 $$\bar{\eta}_{\bar{S}_V} =  \left(21 \sqrt{ \frac{3 }{5 a }    } \eta_V \right) \vee \sqrt{\frac{21 \Delta(V) }{ n   a   } } \quad \text{and} \quad \bar{S}_V = \bar{S}_V (\bar{\eta}_{\bar{S}_V})$$
 where $a$ is given by Lemma~\ref{lemmatestrobuste}. Actually $a$ is very small (smaller than $1$), which implies that $\bar{\eta}_{\bar{S}_V}/2 \geq \eta_V$ and thus
 $$63 D_V (\bar{\eta}_{\bar{S}_V}/2) \leq 63 D_V (\eta_V).$$
 Consequently, the set $\bar{S}_V$ is a $D$-model with parameters $\bar{\eta}_{\bar{S}_V}$, $\bar{D}_{\bar{S}_V} =  63 D_V (\eta_V)$ and $1$.
The collection~$\S$ is then defined by $\S = \left\{\bar{S}_V, \, V \in \V \right\}$.

\subsubsection{Proof of Theorem~\ref{thmselectionmodelegeneral}.}
The assumptions of Theorem~\ref{TheoremTEstimateur} are fulfilled: 
$$a n \bar{\eta}_{\bar{S}_V}^2 \geq  \frac{21^2 \times 3}{5} n \eta_V^2 \geq    \frac{21^2 \times 3}{5}  D_V (\eta_V) \geq \frac{21 \bar{D}_{\bar{S}_V}}{5}  $$ 
and
\begin{eqnarray*}
\sum_{V \in \V} \exp \left(- \frac{a  n  \bar{\eta}_{S_V}^2 }{21} \right)  \leq  \sum_{V \in \V} \exp \left(- \Delta(V) \right) \leq 1.
\end{eqnarray*}
The selection rule described in Section~\ref{SectionAboutTEstimators} provides thus an estimator $\hat{s}  \in \cup_{V \in \V} \bar{S}_V$ such that, for all $\xi > 0$, 
\begin{eqnarray*}
\P \left[ C H^2(s, \hat{s})    \geq     \inf_{V \in \V} \left\lbrace H^2 \left(s, \bar{S}_V \right)   + \bar{\eta}_{\bar{S}_V}^2  \right\rbrace + \xi \right] \leq e^{-n \xi}
\end{eqnarray*}
where $C > 0$ is universal.
By using  inequality~{(\ref{eqlemmapreuvetheoremprincipal})},  
$$ H^2 \left(s, \bar{S}_V \right) \leq 16 \left[ d_2^2(\sqrt{s},V) + \bar{\eta}_{\bar{S}_V}^2 \right]$$
and hence
$$\inf_{V \in \V} \left\lbrace H^2 \left(s, \bar{S}_V \right)   + \bar{\eta}_{\bar{S}_V}^2  \right\rbrace  \leq C' \inf_{V \in \V} \left\lbrace d^2_2 \left(\sqrt{s}, V \right)   + \eta_{V}^2 + \frac{\Delta(V)}{n} \right\rbrace  $$
for some universal constant $C' > 0$.
Finally,
\begin{eqnarray*}
\P \left[ C'' H^2(s, \hat{s})    \geq      \inf_{V \in \V} \left\lbrace d^2_2 \left(\sqrt{s}, V \right)   + \eta_{V}^2 + \frac{\Delta(V)}{n} \right\rbrace + \xi \right] \leq e^{-n \xi}
\end{eqnarray*}
where $C'' = C/(C' \vee 1)$.\carre

\subsubsection{Proof of Lemma~\ref{lemmatestrobuste}.} \label{sectionpreuvelemmerobuste}
We start with the following Bennett-type inequality which generalizes Proposition~7 of~\cite{Reynaud-Bouret2003}.
\begin{lemme}\label{inegconcentrationpois}
Let $f_1,\dots,f_n$ be $n$   bounded measurable functions.
Let $\rho, \upsilon$ be positive numbers such that  $\rho \geq \max_{1 \leq i \leq n} \|f_i\|_{\infty}$ and
$$  \frac{1}{n} \sum_{i=1}^n \int_{\TT} f_i^2 (t) s_i (t) \d \mu (t) \leq \upsilon.$$
Then, for all $r \geq 0$,
\begin{eqnarray*}
\P\left( \frac{1}{n}  \sum_{i=1}^n \left[\int_{\TT} f_i (t) \d N_i (t) - \E \left(\int_{\TT} f_i (t) \d N_i (t) \right) \right]   \geq r \right) &\leq& \exp \left( - n \frac{\upsilon }{b^2} h \left( \frac{ \rho r  }{\upsilon}\right)  \right)\\
&\leq& \exp \left( - n \frac{r^2}{2 \left(\upsilon +  \frac{\rho r}{3} \right)} \right) 
\end{eqnarray*}
where $h$ is the function defined for $u \in (-1,+\infty)$ by $h(u) = (1+u) \log(1+u)-u$.
\end{lemme}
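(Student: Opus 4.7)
The plan is a classical Chernoff argument based on the explicit Laplace transform of Poisson integrals. First, fix $\lambda > 0$ and set $X_i = \int_{\TT} f_i(t) \d N_i(t) - \int_{\TT} f_i(t) s_i(t) \d \mu(t)$, the centred Poisson integral. Using the standard Campbell-type formula for a Poisson process $N_i$ with intensity $s_i$ with respect to $\mu$, one obtains
\[ \log \E\left[ \exp\left(\lambda X_i \right)\right] = \int_{\TT} \left( e^{\lambda f_i(t)} - 1 - \lambda f_i(t) \right) s_i(t) \d \mu(t). \]

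The key analytic input is that the map $u \mapsto (e^u - 1 - u)/u^2$ (extended by $1/2$ at $0$) is non-decreasing on the whole real line, which can be checked by computing two derivatives. Applied pointwise with $u = \lambda f_i(t)$ and using $\|f_i\|_\infty \leq \rho$, this yields
\[ e^{\lambda f_i(t)} - 1 - \lambda f_i(t) \leq \frac{\phi(\lambda \rho)}{\rho^2} f_i^2(t), \qquad \phi(x) = e^x - 1 - x. \]
Integrating against $s_i \d \mu$, summing over $i$ using independence, and invoking the variance hypothesis gives
\[ \log \E\left[\exp\left(\lambda \sum_{i=1}^n X_i\right)\right] \leq \frac{n \upsilon \phi(\lambda \rho)}{\rho^2}. \]

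Next, I apply Markov's inequality to obtain, for $r \geq 0$,
\[ \P\left( \frac{1}{n} \sum_{i=1}^n X_i \geq r \right) \leq \exp\left( -n \lambda r + \frac{n \upsilon}{\rho^2} \phi(\lambda \rho) \right), \]
and optimize over $\lambda > 0$. A direct computation shows that the minimum is attained at $\lambda \rho = \log(1 + \rho r/\upsilon)$, and substitution reduces the exponent to $-n (\upsilon/\rho^2) h(\rho r/\upsilon)$, where $h(x) = (1+x) \log(1+x) - x$. This establishes the Bennett-type bound. The Bernstein-type bound then follows from the elementary estimate $h(x) \geq x^2 / (2 + 2x/3)$ valid for all $x \geq 0$, a standard calculus fact.

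There is essentially no obstacle in this argument; the only delicate point is that the $f_i$ are allowed to take negative values, which is why the monotonicity of $u \mapsto (e^u-1-u)/u^2$ must be established on all of $\R$ rather than merely on $[0,+\infty)$. Everything else is bookkeeping and optimization.
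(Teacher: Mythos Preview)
Your proof is correct and follows the same Cram\'er--Chernoff strategy as the paper: bound the log-Laplace transform of the centred Poisson integral via the monotonicity of $u\mapsto(e^u-1-u)/u^2$, then optimize over the exponential parameter. The only difference is that you invoke the Laplace functional of a Poisson process directly (Campbell's exponential formula), whereas the paper derives it from scratch by first treating piecewise constant $f_i$ and then passing to the limit via Fatou's lemma; this is a presentational choice rather than a different argument.
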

\begin{proof}
By homogeneity we can assume that $\rho = 1$. We  assume moreover that for each  $i \in \{1,\dots, n\}$,  $f_i$ is  a piecewise  constant function (with a finite number of pieces). There exist thus $k_1,\dots , k_n \in \N^{\star}$ and a family $(a_{i,j})_{\substack{1 \leq i \leq n \\ 1 \leq j \leq k_i }}$ of elements of $[-1,1]$ such that 
 $$\forall t \in \TT , \quad f_i (t) = \sum_{j=1}^{k_i} a_{i,j} \mathbbm{1}_{A_{i,j}} (t)$$ 
 where the $A_{i,j}$ are measurable sets of $\TT$ such that $A_{i,j} \cap A_{i,j'} = \emptyset$ for all $j \neq j'$. We have for all $\xi \geq 0$
\begin{eqnarray*}
\log \E \left(e^{\xi \sum_{i=1}^n \left[ \int_{\TT} f_i \d N_i - \E \left( \int_{\TT} f_i \d N_i \right) \right] } \right) &=& \sum_{i=1}^n \log \E \left(e^{\xi \left[ \int_{\TT} f_i \d N_i - \E \left( \int_{\TT} f_i \d N_i \right) \right] } \right) \\
&=&  \sum_{i=1}^n \sum_{j=1}^{k_i} \log \E \left(e^{\xi  a_{i,j} \left[ N_i(A_{i,j}) - \E \left(  N_i(A_{i,j}) \right) \right] } \right) \\
&=& \sum_{i=1}^n \sum_{j=1}^{k_i}  \E\left( N_i(A_{i,j})\right)  ( e^{\xi a_{i,j}} - \xi a_{i,j} - 1).
\end{eqnarray*}
By using the monotony of the function $x \mapsto (e^x - x - 1)/x^2$,
\begin{eqnarray*}
\log \E \left(e^{\xi \sum_{i=1}^n \left[ \int_{\TT} f_i \d N_i - \E \left( \int_{\TT} f_i \d N_i \right) \right] } \right)
&\leq&  \sum_{i=1}^n \sum_{j=1}^{k_i}\E\left( a_{i,j}^2 N_i(A_{i,j})\right)  ( e^{\xi} - \xi  - 1) \\
&\leq& n \upsilon  ( e^{\xi} - \xi  - 1).
\end{eqnarray*}
This inequality still holds when the $f_i$ are not piecewise constant since a measurable function can be approximated by piecewise constant functions. 
Indeed,  there exists a sequence  $(f_i^{(k)})_{k \geq 1}$  of piecewise constant functions (with a finite number of jumps) such that $f_i^{(k)} \rightarrow f_i$ when $k \rightarrow +\infty$ in the space $\L^2(\TT, s_i \d \mu)$ and such that 
$\|f_i^{(k)}\|_{\infty} \leq 1$ whatever $k, i$.
By using Fatou lemma,
\begin{eqnarray*}
\log \E \left( \liminf_{k \rightarrow +\infty} e^{\xi \sum_{i=1}^n \left[ \int_{\TT} f_i^{(k)} \d N_i -  \int_{\TT} f_i^{(k)} s_i \d \mu_i  \right] } \right)
\leq n \upsilon  ( e^{\xi} - \xi  - 1).
\end{eqnarray*}
Since,
$$\E \left[ \left| \int_{\TT} f_i^{(k)} \d N_i - \int_{\TT} f_i  \d N_i \right| \right] \leq  \int_{\TT} \left|f_i^{(k)} - f_i \right| s_i \d \mu \rightarrow 0$$
one can assume (up to considering a subsequence) that $\int_{\TT} f_i^{(k)} \d N_i - \int_{\TT} f_i  \d N_i \rightarrow 0$ almost surely (for all $i \in \{1,\dots n\}$).
We then have  
\begin{eqnarray*}
\log \E \left( e^{\xi \sum_{i=1}^n \left[ \int_{\TT} f_i  \d N_i -  \int_{\TT} f_i s_i \d \mu_i  \right] } \right)
\leq n \upsilon  ( e^{\xi} - \xi  - 1)
\end{eqnarray*}
as wished.
 The concentration inequality is then deduced from the Cramér-Chernoff method, see Chapter~2 of \cite{Massart2003}.
\end{proof}

Let us return to the proof of Lemma~\ref{lemmatestrobuste}. We  define the function $\zeta$ on $[0,+\infty)^2$ by
 $$\zeta(x,y) = \frac{1}{\sqrt{2}} \left(\sqrt{\frac{y}{x + y}} -  \sqrt{\frac{x}{x + y}} \right) \quad \text{for all $x,y \in [0,+\infty)$} ,$$
where we use the convention $0/0 = 0$. Let then
\begin{eqnarray*}
Z_{f,f'} (\mathbf{N},\mathbf{x}) &=& T_{f,f'} (\mathbf{N},\mathbf{x})- \E \left[ T_{f,f'} (\mathbf{N},\mathbf{x})\right] \\
&=& \int_{\TT \times \XX} \zeta(f, f') \d M - \E \left( \int_{\TT \times \XX} \zeta(f, f') \d M \right).
\end{eqnarray*}
We  use the claim below whose proof ensues from the proofs  of Propositions~2 and 3 of~\cite{BaraudMesure}.
\begin{Claim} \label{ClaimInequaliteDansPreuvePoisson}
$$\E \left[T_{f,f'} (\mathbf{N},\mathbf{x})\right] \leq \left(1 + \frac{1}{\sqrt{2}} \right) H^2(s,f) - \left(1 - \frac{1}{\sqrt{2}} \right) H^2(s,f') $$
and
$$\frac{1}{n} \sum_{i=1}^n \int_{\TT} \zeta^2 \left(f (t, x_i), f' (t, x_i) \right)  s (t,x_i) \d \mu (t)  \leq H^2(s,f) +  H^2 (s,f') + H^2(f,f').$$
\end{Claim}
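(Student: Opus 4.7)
The plan is to reduce both inequalities to pointwise algebraic bounds in the triple $(a,b,c) = (\sqrt{f(t,x_i)}, \sqrt{f'(t,x_i)}, \sqrt{s(t,x_i)})$, then integrate over $\mu$ and average over $i$. A key observation is that both $T_{f,f'}$ and $\int \zeta^2(f,f') s \, dM$ split as $\frac{1}{n}\sum_i$ of integrals involving only the $i$-th Poisson process and covariate, so the covariates enter only through the outer average and the single-process analysis of \cite{BaraudMesure} applies line by line.

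For the first bound, I would start by computing $\E[T_{f,f'}(\mathbf{N},\mathbf{x})]$ via Campbell's formula, $\E\bigl[\int_\TT g\,dN_i\bigr] = \int_\TT g(t) s(t,x_i)\,d\mu(t)$. Writing the resulting integrand in terms of $(a,b,c)$ gives
\begin{equation*}
\E[T_{f,f'}] = \frac{1}{n}\sum_{i=1}^n\int_\TT(b-a)\left[\frac{\sqrt{a^2+b^2}}{2\sqrt 2} + \frac{c^2}{\sqrt{2(a^2+b^2)}} - \frac{a+b}{2}\right]d\mu.
\end{equation*}
The desired inequality then follows from the pointwise estimate
\begin{equation*}
(b-a)\left[\frac{\sqrt{a^2+b^2}}{2\sqrt 2} + \frac{c^2}{\sqrt{2(a^2+b^2)}} - \frac{a+b}{2}\right] \leq \tfrac{1}{2}\bigl(1+\tfrac{1}{\sqrt 2}\bigr)(c-a)^2 - \tfrac{1}{2}\bigl(1-\tfrac{1}{\sqrt 2}\bigr)(c-b)^2,
\end{equation*}
which I would obtain by writing $c^2$ as a weighted sum of $(c-a)^2 + a^2$ and $(c-b)^2 + b^2$, calibrated so that the linear parts cancel the term $-\frac{a+b}{2}$, and using the two-sided control $(a+b)/\sqrt 2 \leq \sqrt{a^2+b^2} \leq a + b$ to absorb the remaining cross terms.

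For the second bound, a direct computation gives $\zeta^2(f,f') = (b-a)^2 / [2(a^2+b^2)]$, so it suffices to show the pointwise inequality
\begin{equation*}
(b-a)^2 c^2 \leq (a^2+b^2)\bigl[(c-a)^2 + (c-b)^2 + (a-b)^2\bigr].
\end{equation*}
Expanding both sides reduces this to $2abc(a+b-c) \leq 2 a^2 b^2 + (a^2+b^2)(a-b)^2$. The right-hand side is nonnegative, so the inequality is trivial when $c \geq a+b$; for $c \leq a + b$ it follows from $c(a+b-c) \leq (a+b)^2/4$, combined with $(a^2+b^2)(a-b)^2 \geq 2ab(a-b)^2$ and $a^2+b^2 \geq 2ab$, which collectively reduce matters to the elementary $(a-b)^2 \geq 0$.

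The main obstacle is the exact determination of the constants $1 \pm 1/\sqrt 2$ in the first pointwise bound. The positive coefficient $1 - 1/\sqrt 2$ in front of $-(c-b)^2$ is what later makes $T_{f,f'}$ a useful test statistic, as it provides the "Hellinger defect" exploited by the Bennett-type Lemma~\ref{inegconcentrationpois}, and recovering it requires a carefully balanced decomposition of $c^2$. This single-process computation is precisely the content of the proofs of Propositions~2 and~3 of \cite{BaraudMesure}, and the present statement is then obtained by integrating the pointwise estimate against $\mu$ and averaging over $i \in \{1,\dots,n\}$, using $2H^2(u,v) = \frac{1}{n}\sum_i \int(\sqrt u - \sqrt v)^2 d\mu$ to rewrite the right-hand side.
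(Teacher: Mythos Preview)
Your approach matches the paper's: both reduce to the single-process pointwise inequalities and defer to Propositions~2 and~3 of \cite{BaraudMesure}, with the covariates entering only through the outer average. The first inequality is handled correctly.

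For the second inequality, however, your claimed algebraic reduction is wrong. Expanding both sides of
\[
(b-a)^2 c^2 \leq (a^2+b^2)\bigl[(c-a)^2 + (c-b)^2 + (a-b)^2\bigr]
\]
does \emph{not} yield $2abc(a+b-c) \leq 2a^2b^2 + (a^2+b^2)(a-b)^2$ (try $a=b=1$, $c=0$: the original gives $0\le 4$ with slack $4$, your reduction gives $0\le 2$ with slack $2$). The correct expansion is cleaner than the route you sketch: one finds directly
\[
(a^2+b^2)\bigl[(c-a)^2 + (c-b)^2 + (a-b)^2\bigr] - (b-a)^2 c^2 = \bigl[(a+b)c - (a^2+b^2)\bigr]^2 + (a^2+b^2)(a-b)^2 \geq 0,
\]
a sum of two squares, so no case splitting on the sign of $a+b-c$ is needed. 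This does not undermine your overall argument, since the pointwise inequality you stated is correct and you rightly identify it as the content of \cite{BaraudMesure}, but the intermediate step as written is false.
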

We derive from the first point of the claim that
\begin{eqnarray*}
 \P \left[ T_{f,f'} (\mathbf{N},\mathbf{x})\geq  z \right]  &=&    \P \left[ Z_{f,f'} (\mathbf{N},\mathbf{x})\geq  z - \E \left[T_{f,f'} (\mathbf{N},\mathbf{x})\right] \right] \\
  &\leq&    \P \left[ Z_{f,f'} (\mathbf{N},\mathbf{x})\geq  z - \left(1 + \frac{1}{\sqrt{2}} \right) H^2(s,f) + \left(1 - \frac{1}{\sqrt{2}} \right) H^2(s,f')   \right].
\end{eqnarray*}
Note that the random variable $Z(f,f')$ can be written as
$$Z_{f,f'} (\mathbf{N},\mathbf{x})= \frac{1}{n} \sum_{i=1}^n  \left[ \int_{\TT} \zeta \left(f (\cdot, x_i), f' (\cdot, x_i) \right) \d N_i - \E \left(\int_{\TT}\zeta \left(f (\cdot, x_i), f' (\cdot, x_i) \right) \d N_i \right) \right].$$
When $$r =   z - \left(1 + \frac{1}{\sqrt{2}} \right) H^2(s,f) + \left(1 - \frac{1}{\sqrt{2}} \right) H^2(s,f')  $$ 
is non-negative, we apply Lemma~\ref{inegconcentrationpois} with $f_i (\cdot) =  \zeta \left(f (\cdot, x_i), f' (\cdot, x_i) \right)$, $\rho = 1/\sqrt{2}$ and $$\upsilon =  H^2(s,f) +  H^2 (s,f') + H^2(f,f')$$
to obtain
\begin{eqnarray*}
\P \left[ T_{f,f'} (\mathbf{N},\mathbf{x})\geq  z \right] \leq \exp \left( - \frac{n r^2}{2 \upsilon + \frac{r \sqrt{2}}{3}} \right).
\end{eqnarray*}
We now bound from above the right-hand side of this inequality. 

For this, we begin to bound $\upsilon$ from above. We deduce from the triangular inequality and from
$4 H (s,f) \leq H (f,f')$ that
\begin{eqnarray*}
\upsilon &\leq&  3 H^2(s,f) +  3 H^2(f,f') \\
&\leq& 3 (1 + 1/16) H^2(f,f').
\end{eqnarray*} 
Now, we bound  $r$ from below.
Note that
\begin{eqnarray*}
H(f,f') \leq   H(s,f) +   H (s,f')  \leq \frac{1}{4} H (f,f') +  H (s,f')
\end{eqnarray*}
and thus $H (s,f') \geq 3/4 H(f,f').$ This leads to
\begin{eqnarray*}
r &\geq&   z - \left(1 + \frac{1}{\sqrt{2}} \right) \frac{1}{16} H^2(f,f')  + \left(1 - \frac{1}{\sqrt{2}} \right) \frac{9}{16} H^2(f,f')   \\
&\geq&  z + C H^2(f,f')  
\end{eqnarray*}
where    $C = (8 - 5 \sqrt{2})/{16}> 0$. There are two types of cases involved.
\begin{itemize}
\item If $ z + C H^2(f,f') > 0$,  $r$ is non-negative and thus
\begin{eqnarray*}
\P \left[ T_{f,f'} (\mathbf{N},\mathbf{x})\geq z \right]  \leq  \exp \left( - \frac{n \left( z + C H^2(f,f') \right)^2}{6 (1 + 1/16 ) H^2(f,f')  + \frac{\sqrt{2}}{3} \left( z + C H^2(f,f')   \right)  } \right).
\end{eqnarray*}
Set $C' = 9 \sqrt{2} (1+1/16) + C.$ Then,
\begin{eqnarray*}
\P \left[ T_{f,f'} (\mathbf{N},\mathbf{x})\geq z \right] \leq \exp \left( -  \frac{3 n}{\sqrt{2}} \frac{ \left( z + C H^2(f,f') \right)^2}{z +  C' H^2(f,f')  }\right).
\end{eqnarray*}
One can then verify that
\begin{eqnarray*}
\frac{(z + C H^2(f,f'))^2}{z + C' H^2(f,f')} \!\!\!\!\! &=& \!\!\!\!\! \frac{C^2}{C'} H^2(f,f') + \frac{(2 C' - C) C}{C'^2} z+ \frac{(C-C')^2 z^2}{C'^2 (z + C' H^2(f,f'))}  \\
&\geq& \!\!\!\!\! \frac{C^2}{C'} H^2(f,f') + \frac{(2 C' - C) C}{C'^2} z
\end{eqnarray*}
which implies that
\begin{eqnarray} \label{eqPreuvePoisson}
\P \left[ T_{f,f'} (\mathbf{N},\mathbf{x})\geq     z \right]   \leq   \exp \left( -   \frac{3 n}{\sqrt{2}}  \left( \frac{C^2}{C'} H^2(f,f') + \frac{(2 C' - C) C}{C'^2} z \right) \right).
\end{eqnarray}
\item If  $z + C H^2(f,f') \leq 0$, then
\begin{eqnarray*}
\frac{C^2}{C'} H^2(f,f') + \frac{(2 C' - C) C}{C'^2} z &\leq& \left(\frac{C^2}{C'} - \frac{(2 C' - C) C^2}{C'^2}\right) H^2 (f,f')\\
&\leq& 0.
\end{eqnarray*}
Consequently,  (\ref{eqPreuvePoisson}) also holds.
\end{itemize}
We thus have proved that
$$\P \left[ T_{f,f'} (\mathbf{N},\mathbf{x})\geq b z \right] \leq \exp \left[ -n  a  \left( H^2(f,f') + z \right) \right]$$
where
\begin{eqnarray*}
a &=& \frac{3 C^2}{\sqrt{2} C'} \simeq 4.5 \times 10^{-4} \\
 b &=& \frac{C C'}{2C' - C} \simeq 0.029.
\end{eqnarray*}
This ends the proof.
\qed

\subsubsection{Proof of Lemma~\ref{lemmaDModels}.}  \label{SectionProoflemmaDModels}
By using Proposition~7 of \cite{BirgeTEstimateurs},  we derive from $S_V(\eta)$ a set $S_V' (\eta) \subset V$
such that
$$\forall \varphi \in \LL^2 (\TT \times \XX, M), \, \forall x \geq 2, \quad |S_V' (\eta) \cap \B(\varphi, x \eta)| \leq \exp \left( 7 D_V (\eta/ 2) x^2 \right)$$
where $\B(\varphi, x \eta)$ is the ball centered at $\varphi$ with radius $x \eta$ of the metric space $(\LL^2 (\TT \times \XX, M), d_2)$, and such that
$$\forall f \in  V  , \quad d_2 (f, S_V' (\eta)) \leq \eta.$$
Proposition $12$ of \cite{BirgeTEstimateurs} (applied with $T = S_V' ( \eta)$, $M_0$ the cone of non-negative functions of $\L^2 (\TT \times \XX, M) $, $M' = \L^2 (\TT \times \XX, M)$ and $\bar{\pi}$ defined by $\bar{\pi} (f) = f \vee 0$) provides a subset $S_V'' (\eta)$ such that the functions $f \in S_V''(\eta)$ are non negative, such that
$$\forall f \in \L^2 (\TT \times \XX, M)  , \, \forall x \geq 2 , \quad |S_V'' (\eta) \cap  \B (f, x \eta) | \leq \exp \left(63 D_V (\eta) \, x^2 \right)$$
 and such that
$$\text{for all non-negative function $f \in  \L^2 (\TT \times \XX, M)$,}  \quad d_2 (f, S_V'' (\eta) ) \leq 4 d_2 (f,S_V' (\eta)).$$
The lemma holds with $\bar{S}_{V} (\eta)  =  \{\sqrt{f} , \, f \in S_V'' (\eta) \}$. \qed

\subsection{Proof of Lemma~\ref{lemmeproduitlinear}.}
The proof of this proposition requires the following elementary lemma.
\begin{lemme} \label{calculdehellinger}
Let $f, f' \in \L^2 (\TT,\mu)$ and  $g, g' \in \L^2 (\XX,\nu_n)$ such that $\|f\|_{\t} = \|f'\|_{\t} = 1 $ and $\|g\|_{\x} = \|g'\|_{\x} = 1$. Let  $\kappa, \kappa' \in \R$. Then,
$$d^2_2 \left( \kappa f  g , \kappa' f'  g' \right) = \left( \kappa - \kappa' \right)^2  + \kappa \kappa' \left( d_{\t} ^2( f , f') + d_{\x}^2( g , g') - 1/2 \, d_{\t}^2( f, f') d_{\x}^2(g , g') \right). $$
\end{lemme}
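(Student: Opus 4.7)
The plan is to compute $d_2^2(\kappa f g, \kappa' f' g')$ directly by expanding the square and then rewriting inner products in terms of the distances $d_\t(f,f')$ and $d_\x(g,g')$. Since the computation is essentially algebraic, I do not foresee any real obstacle; the slight care needed is in tracking the cross term, which factorizes nicely thanks to the product structure $M = \mu \otimes \nu_n$.

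First I would write
\begin{equation*}
d_2^2(\kappa f g, \kappa' f' g') = \int_{\TT \times \XX} \bigl(\kappa f(t)g(x) - \kappa' f'(t) g'(x)\bigr)^2 \, \d M(t,x),
\end{equation*}
expand the binomial, and apply Fubini to each of the three resulting integrals. Using the normalizations $\|f\|_\t = \|f'\|_\t = \|g\|_\x = \|g'\|_\x = 1$, the two square terms each contribute $\kappa^2$ and $\kappa'^2$ respectively, while the cross term contributes $-2\kappa\kappa' \langle f,f'\rangle_\t \langle g,g'\rangle_\x$. Hence
\begin{equation*}
d_2^2(\kappa f g, \kappa' f' g') = \kappa^2 + \kappa'^2 - 2\kappa\kappa' \langle f,f'\rangle_\t \langle g,g'\rangle_\x.
\end{equation*}

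Next I would use the polarization-type identity that, for unit vectors in a Hilbert space, $\langle u,v\rangle = 1 - \tfrac{1}{2}\|u-v\|^2$. Applied to $(f,f')$ in $\L^2(\TT,\mu)$ and to $(g,g')$ in $\L^2(\XX,\nu_n)$, this gives
\begin{equation*}
\langle f,f'\rangle_\t = 1 - \tfrac{1}{2} d_\t^2(f,f'), \qquad \langle g,g'\rangle_\x = 1 - \tfrac{1}{2} d_\x^2(g,g').
\end{equation*}
Multiplying these out,
\begin{equation*}
2 \langle f,f'\rangle_\t \langle g,g'\rangle_\x = 2 - d_\t^2(f,f') - d_\x^2(g,g') + \tfrac{1}{2} d_\t^2(f,f') d_\x^2(g,g').
\end{equation*}

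Finally I would substitute this into the preceding display and regroup: the constant term $2\kappa\kappa'$ combines with $\kappa^2 + \kappa'^2$ to produce $(\kappa - \kappa')^2$, while the remaining contributions form $\kappa\kappa' \bigl( d_\t^2(f,f') + d_\x^2(g,g') - \tfrac{1}{2} d_\t^2(f,f') d_\x^2(g,g') \bigr)$, which is exactly the claimed formula.
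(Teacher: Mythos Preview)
Your proof is correct and is exactly the straightforward computation one expects; the paper itself does not spell out a proof for this lemma, merely labeling it ``elementary,'' and your expansion via Fubini together with the identity $\langle u,v\rangle = 1 - \tfrac{1}{2}\|u-v\|^2$ for unit vectors is the natural way to carry it out.
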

Let $\eta > 0$. In this proof, we say that a set $S(\eta)$ is a $\eta$-net of a  set $V$ in a metric space $(E,d)$ if, for all $y \in V$, there exists $x \in S(\eta)$ such that $d(x,y) \leq \eta$. 

Let us denote by $S_1$ (respectively $S_2$) the unit sphere of $V_1$ (respectively $V_2$).
Let   $S_{1} (\eta) \subset S_1$ (respectively $S_{2} (\eta) \subset S_{2}$ )  be a $\eta$-net of $S_{1}$ (respectively $S_{2}$) such that
\begin{eqnarray}
\forall f \in \L^2 (\TT,\mu), \; \forall x \geq 0, \quad  \left|S_{1} (\eta) \cap \B_{t} (f, x \eta) \right| &\leq& (2 x + 1)^{\dim V_{1}} \label{eqPreuveMajorationFonctionProduit1} \\
\forall g \in  \L^2 (\XX,\nu_n) , \; \forall x \geq 0, \quad \left|S_{2} (\eta) \cap \B_{\x} (g, x \eta) \right| &\leq& (2 x + 1)^{\dim V_{2}} \label{eqPreuveMajorationFonctionProduit2}
\end{eqnarray}
where $\B_{t} (f, x \eta) $ and $\B_{\x} (g, x \eta)$ are the closed balls centered at $f$ and $g$ with   radius $x \eta$  of the metric spaces $(\L^2 (\TT,\mu), d_{\t}) $ and $(\L^2 (\XX,\nu_n), d_{\x})$ respectively.
We refer to Lemma~4 of \cite{BirgeTEstimateurs} for the existence of these nets.
Let now 
$$S (\eta) = \bigcup_{k \in \N^{\star}}  \left\{ \frac{k \eta}{\sqrt{2}}   f g, \, (f,g) \in   S_{1} \left( \frac{1}{ \sqrt{2} k }  \right) \times S_{2}\left( \frac{1}{ \sqrt{2} k } \right) \right\}. $$
First of all, $S (\eta)$ is a $\eta$-net of $V$. Indeed, let $\varphi \in V$. We can write $\varphi (t,x) = \kappa f (t) g (x)$ where $\kappa \geq 0$, $f \in S_1$ and $g \in S_{2}$.
Let us define $$k =  \inf \left\{i \in \N^{\star}, \, i \geq  \sqrt{2} \kappa / \eta \right\} $$ and  let $(f' ,g') \in S_{1} (1/(\sqrt{2} k)) \times S_{2} (1/(\sqrt{2} k)) $ such that $$d_{\t} (f , f') \leq\frac{1}{ \sqrt{2} k } \quad \text{and} \quad d_{\x} (g, g') \leq \frac{1}{ \sqrt{2} k }.$$
By using Lemma \ref{calculdehellinger}, the application $\varphi' (t,x) = \frac{k \eta}{\sqrt{2} }  f' (t) g' (x)$ is such that 
$$d_2 \left( \varphi , \varphi' \right) \leq  \eta,$$
which proves that $S (\eta)$ is a $\eta$-net of $V$.

According to  Definition~\ref{definitionmetricspace},  we now consider $\varphi  \in \L^2 (\TT \times \XX, M)$  and $x \geq 2$ and aim at bounding from above the cardinality of the set $S (\eta) \cap \B \left( \varphi , x \eta \right)$ (where we recall that $ \B \left(\varphi , x \eta \right)$ is the closed ball centered at $\varphi$ with radius $x \eta$ of the metric space $(\LL^2 (\TT \times \XX, M),d_2)$). 

For this purpose, we begin to assume that  $\varphi$ belongs to $S(\eta)$, which implies that the function can be written as $\varphi (t,x) = \kappa f (t) g (x) $. We introduce
\begin{eqnarray*}
\mathcal{K} = \left\{\frac{k \eta}{\sqrt{2}} ,\, k \in \N^{\star}, \, \left|\frac{k \eta}{\sqrt{2}} - \kappa\right| \leq x \eta \right\} 
\end{eqnarray*}
and for all $\kappa' \in \mathcal{K}$,
\begin{eqnarray*}
\mathcal{C}  (\kappa') = \left(S_{1} \left( \frac{\eta}{2 \kappa'} \right) \cap \B_{\t} \left(f, 6 x^{2} \frac{\eta}{2 \kappa'} \right) \right) \times \left(S_{2} \left( \frac{\eta}{2 \kappa'} \right) \cap \B_{\x} \left(g, 6 x^{2}  \frac{\eta}{2 \kappa'} \right) \right).
\end{eqnarray*}
Let $$T(\eta) =  \left\{\kappa' f' g' ,\; \kappa' \in \mathcal{K}, \; (f',g') \in \mathcal{C} (\kappa')  \right).$$
We shall prove that
\begin{eqnarray} \label{eqInclusionPreuveFonctionProduit}
S (\eta) \cap \B \left( \kappa f g , x \eta \right) \subset T(\eta).
\end{eqnarray}
We then upper-bound the cardinality of $S (\eta) \cap \B \left( \kappa f g , x \eta \right)$ by bounding from above the cardinality of $T(\eta)$

Let  $\varphi'  \in S (\eta) \cap \B \left( \kappa f g , x \eta \right)$.  There exist $\kappa'$, $f'$ and $g'$ such that $\varphi' =  \kappa' f'  g'$ and we derive from Lemma~\ref{calculdehellinger} that
$$(\kappa - \kappa')^2 \leq  d_2^2 \left(\varphi , \varphi'  \right)  \leq x^2 \eta^2, $$
which implies that $\kappa' \in \mathcal{K}$. 
We now distinguish several cases.

\begin{itemize}
\item Suppose that  $$\left(\int_{\TT} f(t) f' (t) \d \mu(t)\right) \left(\int_{\XX} g (x) g'(x) \d \nu_n(x) \right) < 0.$$
We then have  $d_2^2 (\varphi , \varphi') \geq \kappa^2 + \kappa'^2.$  Since $\kappa \geq \eta / \sqrt{2}$, $\kappa' \leq \kappa + x \eta$ and  $x \geq 2$, 
$$\frac{\kappa'}{\kappa} \leq 1 + \sqrt{2} x \leq \frac{3}{2} x.$$ 
Thus, $d_2^2 (\varphi , \varphi') \geq  4  \kappa'^2 / (9 x^2)$.  Since $f,f' \in S_1$ and $g,g' \in S_2$,
\begin{eqnarray*}
\|f - f'\|_{\t}^2 \leq 4 \quad \text{and} \quad    \|g-g'\|_{\x}^2  \leq 4   \
 \end{eqnarray*}
and thus 
\begin{eqnarray*}
\|f - f'\|_{\t}^2 \leq \frac{9 x^2}{\kappa'^2} d_2^2(\varphi,\varphi') \leq  \frac{9 x^4}{\kappa'^2} \eta^2  \\
   \|g-g'\|_{\x}^2   \leq \frac{9 x^2}{\kappa'^2} d_2^2(\varphi,\varphi') \leq  \frac{9 x^4}{\kappa'^2} \eta^2. 
\end{eqnarray*}
We then have
 $$f' \in S_{1} \left( \frac{\eta}{2 \kappa'} \right) \cap \B_{\t} \left(f, 6 x^{2} \frac{\eta}{2 \kappa'} \right) \quad \text{and} \quad g' \in S_{2} \left( \frac{\eta}{2 \kappa'} \right) \cap \B_{\x} \left(g, 6 x^{2} \frac{\eta}{2 \kappa'} \right)$$
that is $(f',g') \in \mathcal{C} (\kappa') $ and thus $\varphi' \in T(\eta)$.
\item If now, $$\int_{\TT} f(t) f' (t) \d \mu(t) > 0 \quad \text{and} \quad \int_{\XX} g (x) g'(x) \d \nu_n(x)  > 0 ,$$
then $d_{\t}^2 (f , f' ) \leq 2$ and $d_{\x}^2 (g , g') \leq 2$. We then  derive from Lemma~\ref{calculdehellinger} and from the elementary inequality 
 $$\frac{1}{2} (y_1 + y_2) \leq y_1 + y_2 - \frac{1}{2}  y_1 y_2 \quad \text{for all $y_1,y_2\in [0,2]$,}$$
 that
$$(\kappa - \kappa')^2 + \frac{\kappa \kappa'}{2} \left(d_{\t}^2 (f , f' ) + d_{\x}^2 (g , g'
) \right) \leq   d_2^2 \left( \varphi , \varphi' \right)  \leq x^2 \eta^2.$$
Hence,
 $$ d_{\t}^2 (f , f' ) + d_{\x}^2 (g , g')  \leq  \frac{2 x^2 \eta^2}{\kappa \kappa'} .$$
 By using the inequality $\kappa' / \kappa \leq 3/2 x$ proved in the first point, we  deduce
$$f' \in S_{1} \left( \frac{\eta}{2 \kappa'} \right) \cap \B_{\t} \left(f, 2 \sqrt{3} x^{3/2} \frac{\eta}{2 \kappa'} \right) \quad \text{and} \quad g' \in S_{2} \left( \frac{\eta}{2 \kappa'} \right) \cap \B_{\x} \left(g, 2 \sqrt{3} x^{3/2} \frac{\eta}{2 \kappa'} \right).$$
Since $2 \sqrt{3} x^{3/2} \leq 6 x^2$ (because $x \geq 2$), we have $(f',g') \in \mathcal{C} (\kappa')$ and thus $\varphi' \in T(\eta)$.
\item Finally, assume that  $$\int_{\TT} f(t) f' (t) \d \mu(t) < 0 \quad \text{and} \quad \int_{\XX} g (x) g'(x) \d \nu_n(x)  < 0.$$
Note that the function $\varphi'$ can also be written as $\varphi' = \kappa' (-f') (-g').$ We then deduce from the second point that
$$-f' \in S_{1} \left( \frac{\eta}{2 \kappa'} \right) \cap \B_{\t} \left(f, 2 \sqrt{3} x^{3/2} \frac{\eta}{2 \kappa'} \right) \; \text{and} \; -g' \in S_{2} \left( \frac{\eta}{2 \kappa'} \right) \cap \B_{\x} \left(g, 2 \sqrt{3} x^{3/2} \frac{\eta}{2 \kappa'} \right)$$
and thus $(-f',-g') \in \mathcal{C} (\kappa')$. Hence,  $\varphi' \in T(\eta)$ as wished.
\end{itemize}
We thus have proved (\ref{eqInclusionPreuveFonctionProduit}) and  thus
\begin{eqnarray*}
\left|S (\eta) \cap \B \left( \kappa f g , x \eta \right) \right| \leq   \sum_{\kappa' \in  \mathcal{K}}   \left| \mathcal{C} (\kappa')\right| .
\end{eqnarray*}
Now, note that $|\mathcal{K} | \leq 2 \sqrt{2} x + 1$. By using (\ref{eqPreuveMajorationFonctionProduit1}) and (\ref{eqPreuveMajorationFonctionProduit2}),   for all $\kappa'$,
$$|\mathcal{C} (\kappa')| \leq \left(12 x^2 + 1 \right)^{\dim V_{1} + \dim V_{2}}.$$
Consequently, we have proved 
$$\forall \varphi  \in S(\eta), \, \forall x \geq 2 , \quad \left|S (\eta) \cap \B \left( \varphi , x \eta \right)\right| \leq  \big(2\sqrt{2} x + 1 \big) \left(12 x^2 + 1 \right)^{\dim V_{1} + \dim V_{2}}.$$

Let us recall that we must to upper bound the cardinality of $S (\eta) \cap \B \left( \varphi , x \eta \right)$ for all  $\varphi  \in \L^2 (\TT \times \XX, M)$. 
For this, if  $\varphi \in \L^2 (\TT \times \XX, M)$, may be $ \left|S (\eta) \cap \B \left( \varphi , x \eta \right)\right|  = 0$. If not,
there exists $\varphi' \in S (\eta) \cap \B \left( \varphi , x \eta \right)$ and thus
$$\left|S (\eta) \cap \B \left( \varphi , x \eta \right)\right| \leq  \left|S (\eta) \cap \B \left( \varphi' , 2 x \eta \right)\right|.$$
Consequently, for all $\varphi  \in \L^2 (\TT \times \XX, M)$,
$$\forall x \geq 2 , \quad \left|S (\eta) \cap \B \left( \varphi , x \eta \right)\right| \leq \big(4 \sqrt{2} x + 1 \big) \left(48 x^2 + 1 \right)^{\dim V_{1} + \dim V_{2}}.$$
The conclusion ensues from the elementary inequalities 
$$\forall x \geq 2 , \quad  4 \sqrt{2} x + 1  \leq e^{1.4 x^2}  \; \text{and} \;  48 x^2 + 1  \leq e^{1.4 x^2}. $$ \carre

\subsection{Proof of Proposition~\ref{propmultiplicativemodels}.}
For all pair $(V_{1}, V_{2}) \in \V_{1} \times \V_{2}$,  we define the set~$V$ by relation~{(\ref{Vmultiplicatif})}.
Let then $\V$ be the collection of all $V$ when $(V_{1}, V_{2})$ varies among $\V_{1} \times \V_{2}$.
Let $\bar{\Delta}$ be the application on $\V$ defined by
$$\bar{\Delta} \left( V \right) = \Delta_1(V_{1}) + \Delta_2(V_{2})$$
when $V$ corresponds to $(V_{1}, V_{2})$.
We apply afterwards Theorem~\ref{thmselectionmodelegeneral} with $\V$ and $\bar{\Delta}$ to derive an estimator $\hat{s}$ such that
\begin{eqnarray*}
C \E\left[H^2(s, \hat{s}) \right]  \leq   \inf_{V \in \V} \left\lbrace d^2_2 \left(\sqrt{s},V  \right) +   \frac{ \dim V_{1} + \dim V_{2} + 1 + \Delta_1 (V_{1})+ \Delta_2 (V_{2})}{n} \right\rbrace .
\end{eqnarray*}
Let  $\kappa v_1 v_2 \in \F$, and let $(v'_1,v'_2) \in V_{1} \times V_{2}$ such that $\|v'_1\|_{\t} = \|v'_2\|_{\x} = 1$. The preceding inequality implies
\begin{eqnarray*}
C' \E\left[H^2(s, \hat{s}) \right]  &\leq&  d_2^2 \left( \sqrt{s} , \kappa v_1 v_2 \right) +   \kappa^2 d^2_2 \left( v_1 v_2  ,  v_1' v_2'   \right) \\
& & \quad +   \frac{ \dim V_{1} \vee 1 + \dim V_{2}  \vee 1 + \Delta_1 (V_{1})+ \Delta_2 (V_{2})}{n} 
\end{eqnarray*}
where $C' = C/2$. By using Lemma~\ref{calculdehellinger} (page \pageref{calculdehellinger}),
$$ d^2_2 \left( v_1 v_2  ,  v_1' v_2'   \right) \leq  d_{\t}^2 \left(v_1 , v_1' \right) + d_{\x}^2 \left(v_2 , v_2' \right).$$
By taking the infimum over all $v_1'$ and $v_2'$,
\begin{eqnarray*}
C' \E\left[H^2(s, \hat{s}) \right]  &\leq&   d_2^2 \left( \sqrt{s} , \kappa v_1 v_2 \right) + \kappa^2 d^2_{\t} \left( v_1  , S_{1}  \right) +   \frac{ \dim V_{1} \vee 1 + \Delta_1 (V_{1})}{n}   \\
& & \quad  +  \kappa^2 d^2_{\x} \left( v_2 , S_{2}  \right) + \frac{\dim V_{2}  \vee 1+ \Delta_2 (V_{2})}{n}
\end{eqnarray*}
where $S_{1}$ and $S_{2}$ are the unit spheres of $V_{1}$ and $V_{2}$ respectively. 

Now, remark that  
$$d_{\t} (v_1, S_{1})  \leq 2 d_{\t} (v_1, V_{1}) \quad \text{and} \quad d_{\x} (v_2, S_{2})  \leq 2 d_{\x} (v_2, V_{2}).$$
Indeed, if $w_1$ is the projection of $v_1$ on  $V_1$, then
\begin{eqnarray*}
d_{\t} (v_1, S_{1}) &\leq& \left\|v_1 - \frac{w_1}{\|w_1\|_{\t}} \right\|_{\t} \\
&\leq& \left\|v_1 - w_1 \right\|_{\t} + \left\|w_1 - \frac{w_1}{\|w_1\|_{\t}} \right\|_{\t} .
\end{eqnarray*}
Now,
$$ \left\|w_1 - \frac{w_1}{\|w_1\|_{\t}} \right\|_{\t} = \left|1 - \frac{1}{\|w_1\|_{\t}} \right| \left\|w_1 \right\|_{\t} = \left|  \left\|w_1 \right\|_{\t} - 1 \right|.$$
Since  $\|v_1\|_{\t} = 1$,
$$ \left\|w_1 - \frac{w_1}{\|w_1\|_{\t}} \right\|_{\t} =  \left|  \left\|w_1 \right\|_{\t} - \|v_1\|_{\t} \right| \leq \left\|v_1 - w_1 \right\|_{\t} .$$
Since $\left\|v_1 - w_1 \right\|_{\t}  = d_{\t} (v_1, V_{1}) $, we have $d_{\t} (v_1, S_{1})  \leq 2 d_{\t} (v_1, V_{1})$. The proof of the inequality $ d_{\x} (v_2, S_{2})  \leq 2 d_{\x} (v_2, V_{2})$ is analogue.

The conclusion follows.
\carre

\subsection{Proof of Lemma~\ref{lemmepowerlawparametric1}.}
For all $b,b'  \in (-1/2, +\infty)$,
\begin{eqnarray*}
\int_0^1 \left( \sqrt{2 b + 1} t^{b} -  \sqrt{2 b' + 1} t^{b'}  \right)^2 \d t =  \frac{4 \left(b - b' \right)^2}{(1 + b + b') \left(\sqrt{2 b + 1} + \sqrt{2 b'+1} \right)^2}
\end{eqnarray*} 
and thus 
\begin{eqnarray*}
\frac{ \left(b - b' \right)^2}{(1 + 2 (b \vee b'))^2 } \leq \int_0^1 \left( \sqrt{2 b + 1} t^{b } -  \sqrt{2 b' + 1} t^{b'}  \right)^2 \d t \leq  \frac{ \left(b  - b' \right)^2}{ (1 + 2 (b \wedge b'))^2 }
\end{eqnarray*} 
which ends the proof. \qed
\subsection{Proof of Lemma~\ref{lemmepowerlawparametric2}.}
For $b > 0$, we define
 $$g_b(t) = \frac{f_b (t)}{\|f_b\|_{\t}} = \frac{2^{k/2} b^{1/2+k/2}}{\sqrt{k/2 \, k!}} \, t^{k/2} e^{-b t}.$$
For all $b,b' > 0$,
\begin{eqnarray*}
\frac{1 }{2 }\int_0^{\infty} \left( g_{b } (t) - g_{b'} (t)  \right)^2 \d t &=&  1 - \frac{\left(2 \sqrt{b  b'} \right)^{k + 1}}{(b +b')^{k + 1}}  \\
&=& \frac{\sum_{j=0}^{k} (b +b')^{k-j} (2 \sqrt{b  b'})^{j}}{(b +b')^{k + 1} (\sqrt{b }+ \sqrt{b'})^2} \left(b  - b' \right)^2 \\
&=& \frac{1}{(b +b') (\sqrt{b }+ \sqrt{b'})^2} \sum_{j=0}^{k} \left( \frac{2 \sqrt{b  b'}}{b  + b'} \right)^j  \left(b  - b' \right)^2.
\end{eqnarray*} 
Consequently, 
\begin{eqnarray*}
\frac{1}{8 (b \vee b')^2} \left(b  - b' \right)^2 \leq \frac{1 }{2 }\int_0^{\infty} \left( g_{b } (t) - g_{b'} (t)  \right)^2 \d t &\leq&  \frac{k + 1}{8 (b \wedge b')^{2}} \left(b  - b' \right)^2
\end{eqnarray*} 
which concludes the proof.
\qed

\subsection{Proof of Proposition~\ref{proppowerlawundercox2}.}
We generalize Lemma~\ref{lemmeproduitlinear} for some new spaces. The proof of the following lemma  is analogue to the one of Lemma~\ref{lemmeproduitlinear} and will not be detailed.

\begin{lemme} \label{lemmeproduitparametrised}
Let  $V_{1}$ and $V_{2}$ be  subsets of the unit spheres of $\L^2 (\TT, \mu)$ and $\L^2 (\XX, \nu_n)$ respectively. 
For each $i \in \{1,2\}$, we assume that there exist  positive numbers $\underline{\rho_i}, \bar{\rho_i}$,  a subset  $W_{\mathtt{i}}$ of a finite dimensional normed linear space $(\bar{W}_{\mathtt{i}},|\cdot|_{\mathtt{i}})$ and  a  surjective map $\Phi_i$ from~$W_{\mathtt{i}}$ onto~$V_i$ such that:
\begin{eqnarray} \label{conditionlemmeproduitparametrized1}
\forall (x,y) \in W_{\mathtt{1}} , \quad \underline{\rho_1} |x - y |_1 \leq d_{\t} (\Phi_1 (x) , \Phi_1 (y) ) \leq \bar{\rho}_1  |x - y |_{\mathtt{1}} 
\end{eqnarray}
\begin{eqnarray} \label{conditionlemmeproduitparametrized2}
\forall (x,y) \in W_{\mathtt{2}} , \quad \underline{\rho_2} |x - y |_2 \leq d_{\x} (\Phi_2 (x) , \Phi_2 (y) ) \leq \bar{\rho}_2  |x - y |_{\mathtt{2}}.
\end{eqnarray}
The set 
\begin{eqnarray*} 
V =  \left\{\kappa  v_1 v_2  , \, (v_1,v_2) \in V_{1} \times V_{2} ,\, \kappa  \in [0, +\infty) \right\}
\end{eqnarray*}
has a finite metric dimension bounded by
$$D_{V} = C \left[1 +  \log \left(1 + \frac{\bar{\rho}_1}{\underline{\rho_1}} \right) \dim {\bar{W}_{\mathtt{1}}} + \log \left(1 + \frac{\bar{\rho}_2}{\underline{\rho_2}} \right) \dim \bar{W}_{\mathtt{2}}   \right]$$
where $C$ is an universal constant.
\end{lemme}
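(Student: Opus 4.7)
The plan is to follow the same architecture as the proof of Lemma~\ref{lemmeproduitlinear}, the only genuine change being the way we build and count the $\eta$-nets on $V_1$ and $V_2$: instead of relying on the linear structure of a finite dimensional subspace, we use the bi-Lipschitz parametrizations $\Phi_1, \Phi_2$ to transport standard nets of $W_{\mathtt i} \subset \bar W_{\mathtt i}$ onto $V_i$, which is why the cost per direction becomes $\log(1+\bar\rho_i/\underline{\rho_i})\dim\bar W_{\mathtt i}$ rather than $\dim V_i$.

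First I would construct, for each $i\in\{1,2\}$ and each $\delta>0$, a $\delta$-net $N_{\mathtt i}(\delta)$ of $W_{\mathtt i}$ in $(\bar W_{\mathtt i},|\cdot|_{\mathtt i})$ enjoying the classical volumetric bound
\[
|N_{\mathtt i}(\delta)\cap \{y\in \bar W_{\mathtt i}:\, |y-y_0|_{\mathtt i}\le r\}| \le \left(1+\frac{2r}{\delta}\right)^{\dim \bar W_{\mathtt i}}\quad \forall y_0\in\bar W_{\mathtt i},\;r\ge 0,
\]
(see e.g.\ Lemma~4 of \cite{BirgeTEstimateurs}). Setting $S_i(\eta)=\Phi_i\!\left(N_{\mathtt i}(\eta/\bar\rho_i)\right)$, the upper Lipschitz bound in (\ref{conditionlemmeproduitparametrized1})--(\ref{conditionlemmeproduitparametrized2}) shows that $S_i(\eta)$ is an $\eta$-net of $V_i$ in the intrinsic metric. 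Conversely, pulling a ball of radius $x\eta$ in $V_i$ back through $\Phi_i$ lands inside a ball of radius $x\eta/\underline{\rho_i}$ in $W_{\mathtt i}$, so
\[
|S_i(\eta)\cap \B_i(f, x\eta)| \le \left(1+\frac{2\bar\rho_i}{\underline{\rho_i}}x\right)^{\dim \bar W_{\mathtt i}},
\]
which replaces (\ref{eqPreuveMajorationFonctionProduit1})--(\ref{eqPreuveMajorationFonctionProduit2}) with a controlled inflation factor $\bar\rho_i/\underline{\rho_i}$.

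Next, mirroring the proof of Lemma~\ref{lemmeproduitlinear}, I would set
\[
S(\eta)=\bigcup_{k\in\N^{\star}} \left\{ \tfrac{k\eta}{\sqrt2}\, f\,g:\, (f,g)\in S_1\!\left(\tfrac{1}{\sqrt2\,k}\right)\times S_2\!\left(\tfrac{1}{\sqrt2\,k}\right) \right\},
\]
and use Lemma~\ref{calculdehellinger} exactly as in the linear case to check that $S(\eta)$ is an $\eta$-net of $V$. Then, for $\varphi=\kappa f g\in S(\eta)$ and $\varphi'=\kappa'f'g'\in S(\eta)\cap\B(\varphi,x\eta)$, the same three-case analysis (sign of $\langle f,f'\rangle\langle g,g'\rangle$) gives $|\kappa-\kappa'|\le x\eta$ together with bounds of the form $\|f-f'\|_{\t}\le 6x^2\eta/(2\kappa')$ and $\|g-g'\|_{\x}\le 6x^2\eta/(2\kappa')$. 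Thus $S(\eta)\cap\B(\varphi,x\eta)$ embeds into the product of the $\mathcal K$-index set (with $|\mathcal K|\le 2\sqrt 2 x+1$) and the two local nets, yielding
\[
|S(\eta)\cap \B(\varphi,x\eta)|\le (2\sqrt 2 x+1)\prod_{i=1}^{2}\left(1+\frac{12\bar\rho_i}{\underline{\rho_i}}x^{2}\right)^{\dim \bar W_{\mathtt i}}.
\]
The extension to an arbitrary center $\varphi\in\LL^2(\TT\times\XX,M)$ is handled by the customary doubling argument.

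Finally I would take logarithms and use $\log(1+a x^2)\le \log(1+a)+2\log x$ for $a\ge 0$, $x\ge 2$, combined with $2\log x\le 1.4\,x^2$, to absorb the $x$-dependence into a single exponential factor $\exp(C\,D_V\,x^2)$ with the advertised $D_V$. The only real subtlety, and the place where the constants $\underline{\rho_i},\bar\rho_i$ enter, is in step two above: the conflict between the ``extrinsic'' radius $x\eta$ (in $d_{\t}$ or $d_{\x}$) and the ``intrinsic'' net spacing $\eta/\bar\rho_i$ on $W_{\mathtt i}$ produces a multiplicative factor $\bar\rho_i/\underline{\rho_i}$ that must be carried carefully through both the sign-positive and sign-negative cases of the three-case analysis. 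Everything else is a routine adaptation of the proof of Lemma~\ref{lemmeproduitlinear}.
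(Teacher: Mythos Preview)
Your proposal is correct and is precisely the approach the paper intends: the paper itself does not spell out a proof, stating only that it ``is analogue to the one of Lemma~\ref{lemmeproduitlinear} and will not be detailed.'' You have correctly identified the single substantive modification---replacing the linear nets of the unit spheres by nets on the parametrizing spaces $W_{\mathtt i}$ transported through the bi-Lipschitz maps $\Phi_i$, which is exactly what produces the factors $\log(1+\bar\rho_i/\underline{\rho_i})$ in the metric dimension---and the remaining steps (Lemma~\ref{calculdehellinger}, the three-case sign analysis, the doubling argument, and the final logarithmic bookkeeping using $\bar\rho_i/\underline{\rho_i}\ge 1$) carry over verbatim.
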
 
\begin{lemme}
Let for all   $r, R \in (b_0, +\infty)$, such that $R > r$,  $V_{\t} (r,R)$ be the set defined by
$$V_{\t} (r,R) =  \left\{  \frac{u_{b} }{\|u_{b}\|_{\t}} , \,   b \in [r,R]  \right\}.$$
Condition~{(\ref{conditionlemmeproduitparametrized1})} holds with $\dim {\bar{W}_{\mathtt{1}}} = 1$,  $\underline{\rho_1} = \underline{\rho} (R)$ and $\bar{\rho_1} =\bar{\rho} (r)$.
\end{lemme}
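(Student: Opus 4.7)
The plan is to verify condition~(\ref{conditionlemmeproduitparametrized1}) with the most natural choices for the parameter space and the map $\Phi_1$. I will take $\bar{W}_{\mathtt{1}} = \R$ equipped with $|\cdot|_{\mathtt{1}}$ being the usual absolute value (so $\dim \bar{W}_{\mathtt{1}} = 1$), $W_{\mathtt{1}} = [r,R] \subset \bar{W}_{\mathtt{1}}$, and $\Phi_1 : [r,R] \to V_{\t}(r,R)$ defined by $\Phi_1(b) = u_b / \|u_b\|_{\t}$. By the very definition of $V_{\t}(r,R)$, the map $\Phi_1$ is surjective onto $V_{\t}(r,R)$.

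The two-sided inequality will follow directly from Assumption~\ref{assumptionparametricsansln} together with the monotonicity of the auxiliary functions $\underline{\rho}$ and $\bar{\rho}$. Indeed, for any $b, b' \in [r,R]$, Assumption~\ref{assumptionparametricsansln} gives
\begin{eqnarray*}
\underline{\rho}(b \vee b') |b-b'| \leq d_{\t}\left(\Phi_1(b), \Phi_1(b')\right) \leq \bar{\rho}(b \wedge b') |b-b'|.
\end{eqnarray*}
Since $\underline{\rho}$ is non-increasing on $I$ and $b \vee b' \leq R$, we have $\underline{\rho}(b \vee b') \geq \underline{\rho}(R)$. Similarly, since $\bar{\rho}$ is non-increasing and $b \wedge b' \geq r$, we have $\bar{\rho}(b \wedge b') \leq \bar{\rho}(r)$. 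Combining these yields
\begin{eqnarray*}
\underline{\rho}(R) |b-b'| \leq d_{\t}\left(\Phi_1(b), \Phi_1(b')\right) \leq \bar{\rho}(r) |b-b'|,
\end{eqnarray*}
which is exactly condition~(\ref{conditionlemmeproduitparametrized1}) with the stated values of $\underline{\rho_1}$ and $\bar{\rho_1}$. There is no real obstacle here: everything is immediate once Assumption~\ref{assumptionparametricsansln} is invoked and the monotonicity of $\underline{\rho}, \bar{\rho}$ is used to replace the $b$-dependent bounds by uniform ones over $[r,R]$.
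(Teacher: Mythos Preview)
Your proof is correct and is exactly the natural argument: the paper does not give a separate proof of this lemma, as it follows immediately from Assumption~\ref{assumptionparametricsansln} together with the monotonicity of $\underline{\rho}$ and $\bar{\rho}$, which is precisely what you wrote out.
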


\begin{lemme} \label{lemmapreuveproduitproof}
For all positive number $\rho$ and  $W \in \W$, let
$$V_{2} (W, \rho) = \left\{  \frac{v_{\ttheta}}{ \|v_{\ttheta}\|_{\x}} , \, \ttheta \in W , \, \|\ttheta\| \leq \rho \right\}.$$
There exists a finite dimensional normed linear space  $(\bar{W}_{\mathtt{2}}, |\cdot|_2)$ and  a map $\Phi_2$ from $\bar{W}_{\mathtt{2}}$ onto $V_{2} (W,\rho)$ such that condition~{(\ref{conditionlemmeproduitparametrized2})} holds with $\dim {\bar{W}_{\mathtt{2}}} \leq \dim W$,  $\underline{\rho_2} = e^{-3 \rho}$ and $\bar{\rho}_2 = e^{3 \rho}$.
\end{lemme}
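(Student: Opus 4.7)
The strategy is to take $\Phi_2(\ttheta) = v_{\ttheta}/\|v_{\ttheta}\|_{\x}$ restricted to a carefully chosen subspace of $W$. Viewed in $\L^2(\XX, \nu_n)$, $v_{\ttheta}$ depends only on the numbers $\langle x_i, \ttheta\rangle$, and the normalization makes $\Phi_2$ invariant under adding a common constant to all of them. The natural ``degenerate'' subspace is therefore
$$K = \bigl\{\mathbf{h} \in W : \langle x_i, \mathbf{h}\rangle \text{ is constant in } i\bigr\}.$$
I let $\bar{W}_{\mathtt{2}}$ be the orthogonal complement of $K$ in $W$ for the Euclidean inner product of $\R^{k_2}$, so $\dim \bar{W}_{\mathtt{2}} = \dim W - \dim K \leq \dim W$, and equip it with the norm $|\ttheta|_{\mathtt{2}} := \sqrt{\var_{\nu_n}[\langle \cdot, \ttheta\rangle]}$, which is indeed a norm on $\bar{W}_{\mathtt{2}}$ since its kernel is exactly $K$. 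Set $W_{\mathtt{2}} = \bar{W}_{\mathtt{2}} \cap \B(0, \rho)$. Surjectivity onto $V_{2}(W, \rho)$ follows from the decomposition $W = \bar{W}_{\mathtt{2}} \oplus K$: any $\ttheta \in W$ with $\|\ttheta\| \leq \rho$ writes as $\ttheta = \ttheta_1 + \ttheta_0$ with $\ttheta_1 \in \bar{W}_{\mathtt{2}}$, $\ttheta_0 \in K$ and $\|\ttheta_1\| \leq \|\ttheta\| \leq \rho$; letting $c$ be the common value of $\langle x_i, \ttheta_0\rangle$ one gets $v_{\ttheta} = e^c v_{\ttheta_1}$ in $\L^2(\XX, \nu_n)$, hence $\Phi_2(\ttheta_1) = \Phi_2(\ttheta)$.

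For the upper Lipschitz bound I differentiate $g_{\ttheta} := \Phi_2(\ttheta)$. A direct calculation gives
$$\partial_{\mathbf{h}} g_{\ttheta}(x) = g_{\ttheta}(x) \bigl(\langle x, \mathbf{h}\rangle - \langle m(\ttheta), \mathbf{h}\rangle \bigr), \qquad m(\ttheta) = \int x \, g_{\ttheta}^2(x) \d \nu_n(x),$$
so that $\|\partial_{\mathbf{h}} g_{\ttheta}\|_{\x}^2$ is the variance of $\langle \cdot, \mathbf{h}\rangle$ under the probability measure $g_{\ttheta}^2 \d \nu_n$ (of total mass $1$ since $\|g_{\ttheta}\|_{\x} = 1$). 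For $\|\ttheta\| \leq \rho$ and $\|x\| \leq 1$ one has $g_{\ttheta}^2 \in [e^{-4\rho}, e^{4\rho}]$, and the standard comparison of variances under equivalent probability measures yields $\|\partial_{\mathbf{h}} g_{\ttheta}\|_{\x}^2 \leq e^{4\rho}|\mathbf{h}|_{\mathtt{2}}^2$. Integrating along the segment from $\ttheta'$ to $\ttheta$ gives $\|g_{\ttheta} - g_{\ttheta'}\|_{\x} \leq e^{2\rho} |\ttheta - \ttheta'|_{\mathtt{2}} \leq e^{3\rho}|\ttheta - \ttheta'|_{\mathtt{2}}$.

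For the lower bound I exploit the multiplicative identity $g_{\ttheta} = g_{\ttheta'}\, e^{\eta}$ with $\eta(x) = \langle x, \ttheta - \ttheta'\rangle - \Delta$ and $\Delta = \log(\|v_{\ttheta}\|_{\x}/\|v_{\ttheta'}\|_{\x})$. Using $g_{\ttheta'}^2 \geq e^{-4\rho}$,
$$\|g_{\ttheta} - g_{\ttheta'}\|_{\x}^2 = \int g_{\ttheta'}^2 (e^{\eta} - 1)^2 \d \nu_n \geq e^{-4\rho} \int (e^{\eta} - 1)^2 \d \nu_n.$$
The elementary inequality $|e^{\eta} - 1| \geq |\eta|\, e^{-|\eta|}$ with the a priori bound $|\eta(x)| \leq 4\rho$ (since $|\langle x, \ttheta - \ttheta'\rangle| \leq 2\rho$ and $|\Delta| \leq 2\rho$) yields $\int (e^{\eta} - 1)^2 \d \nu_n \geq e^{-8\rho}\int \eta^2 \d \nu_n$. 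Finally, for any value of $\Delta$, $\int \eta^2 \d \nu_n \geq \var_{\nu_n}[\langle \cdot, \ttheta - \ttheta'\rangle] = |\ttheta - \ttheta'|_{\mathtt{2}}^2$, since the variance minimizes the second moment over all centerings. Combining gives a lower bound of the form $\|g_{\ttheta} - g_{\ttheta'}\|_{\x} \geq e^{-C\rho}|\ttheta - \ttheta'|_{\mathtt{2}}$. The main obstacle is tracking these exponents sharply enough to reach the advertised $e^{-3\rho}$: I expect the sharp constant to come from using $g_{\ttheta'}^2 \d \nu_n$ itself as the base probability measure, so that the pointwise bound $g_{\ttheta}/g_{\ttheta'} \in [e^{-2\rho}, e^{2\rho}]$ is spent only once rather than both in the passage to $\nu_n$ and in controlling $|e^{\eta} - 1|$.
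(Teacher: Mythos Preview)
Your setup is exactly the paper's: the same degenerate subspace $K$ (the paper calls it $W_{\mathtt 1}=\bigcap_{i\neq j}\Ker\varphi_{i,j}$ with $\varphi_{i,j}(\ttheta)=\langle x_i-x_j,\ttheta\rangle$), the same orthogonal complement, and the same map $\Phi_2$; the surjectivity argument is identical.

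Where you diverge is in the choice of norm and in the lower-bound strategy. The paper equips the complement with
\[
|\ttheta|_{\mathtt 2}=\Bigl(\tfrac{1}{n}\sum_i\bigl(\tfrac{1}{n}\sum_j|\langle x_i-x_j,\ttheta\rangle|\bigr)^2\Bigr)^{1/2}
\]
and argues both inequalities via a pointwise estimate
\[
\frac{e^{-3\rho}}{n}\sum_i |\langle x-x_i,h\rangle|\;\le\;|d\Psi_x(\ttheta)\cdot h|\;\le\;\frac{e^{3\rho}}{n}\sum_i|\langle x-x_i,h\rangle|,
\]
followed by the one-dimensional mean value theorem applied at each $x=x_j$. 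Your norm $|\ttheta|_{\mathtt 2}^2=\var_{\nu_n}\langle\cdot,\ttheta\rangle$ is different but of the same nature, and your variance identity $\|\partial_h g_{\ttheta}\|_{\x}^2=\var_{g_{\ttheta}^2\nu_n}\langle\cdot,h\rangle$ together with the density comparison is a clean route to the upper bound (it even gives $e^{2\rho}$).

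Your direct lower bound via $g_{\ttheta}=g_{\ttheta'}e^{\eta}$ and $|e^{\eta}-1|\ge|\eta|e^{-|\eta|}$ is correct and yields $\underline{\rho_2}=e^{-6\rho}$; as you say, it misses the stated $e^{-3\rho}$. This is harmless for the paper's purposes: the lemma feeds only into Lemma~\ref{lemmeproduitparametrised}, where the quantity that matters is $\log(1+\bar\rho_2/\underline{\rho_2})$, so any constant of the form $e^{c\rho}$ gives the same final bounds up to universal multiplicative constants. It is worth noting that the paper's own pointwise lower bound on $|d\Psi_x(\ttheta)\cdot h|$ is delicate: the terms $\langle x-x_i,h\rangle$ in the numerator need not share a sign and can cancel (e.g.\ $n=3$, $\ttheta=0$, $x=x_1=0$, $x_2=-x_3=h/\|h\|$), so your global argument has the advantage of sidestepping this issue, at the price of a slightly worse exponent.
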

\begin{proof} [Proof of Lemma~\ref{lemmapreuveproduitproof}]
For any integers $i,j \in \N^{\star}$, let us denote by $\varphi_{i,j}$ the linear form on $\R^{k_2}$ defined by~${\varphi_{i,j} (\ttheta) = <x_i - x_j, \ttheta >}$ where $< \cdot , \cdot > $ is the standard scalar product on~$\R^{k_2}$.
 Let $W_{\mathtt{1}} = \cap_{i \neq j} \Ker \varphi_{i,j}$ and let $W_{\mathtt{2}} $ such that
$W = W_{\mathtt{1}}  \oplus W_{\mathtt{2}} $ and such that $<u,v> = 0$ for all $(u,v) \in W_{\mathtt{1}} \times W_{\mathtt{2}}$. Since the functions of $\L^2 (\XX,\nu_n)$ are defined $\nu_n$-almost everywhere, the set $V_2 (W,\rho)$ can be written as 
$$V_2 (W, \rho) = \Phi_2 \left( \left\{\ttheta \in W_{\mathtt{2}}  , \, \|\ttheta\| \leq \rho \right\} \right) \quad \text{where} \quad \Phi_2 (\ttheta) =  \frac{v_{\ttheta}}{ \|v_{\ttheta}\|_{\x}}. $$ 
Indeed, let $\ttheta \in W$ written as $\ttheta = \ttheta_1 + \ttheta_2$ where $\ttheta_1 \in W_{\mathtt{1}} $ and $\ttheta_2 \in W_{\mathtt{2}}$. Then, for all $j \in \{1,\dots,n\}$,
\begin{eqnarray*}
\frac{v_{\ttheta} (x_j)}{ \|v_{\ttheta}\|_{\x}} &=& \frac{\exp \left(<x_j, \ttheta>\right)}{\sqrt{\sum_{i=1}^n \exp \left(2 <x_i, \ttheta> \right) }} \\
&=& \frac{\exp \left(<x_j, \ttheta_1> + <x_j, \ttheta_2>\right) }{\sqrt{\sum_{i=1}^n \exp \left(2 <x_i, \ttheta_1> + 2 <x_j, \ttheta_2> \right) }}  \\
&=& \frac{v_{\ttheta_1} (x_j)}{ \|v_{\ttheta_1}\|_{\x}}
\end{eqnarray*}
and thus $\Phi_2(\ttheta) = \Phi_2 (\ttheta_1)$, $\nu_n$-almost everywhere.

For all $x \in \XX$, let $\Psi_{x}$ be the function defined from $\XX$ into $\R$ by $\Psi_{x} (\ttheta) = \Phi_2 (\ttheta) (x) = v_{\ttheta} (x) /  \|v_{\ttheta}\|_{\x}$.
We derive from some calculus that the differential of $\Psi_x$ at the point $\ttheta \in W_2$, denoted by $d \Psi_x (\ttheta)$, is
$$\forall h \in \R^{k_2}, \quad d \Psi_x (\ttheta) \cdot h = \frac{ \frac{1}{n} \sum_{i=1}^n \exp \left(2 <\ttheta, x_i > + <\ttheta, x > \right) \left( <x - x_i, h> \right) }{\left( \frac{1}{n} \sum_{i=1}^n \exp \left(2 <\ttheta, x_i > \right)\right)^{3/2}}.$$
In particular, we have
$$\forall h \in \R^{k_2}, \quad \frac{e^{-3 \rho}}{n} \sum_{i=1}^n |<x - x_i, h >| \leq |d  \Psi_x (\ttheta) \cdot h | \leq \frac{e^{3 \rho}}{n} \sum_{i=1}^n |<x - x_i, h >|.$$
If we endow $W_{\mathtt{2}}$ with  the norm $|\cdot|_{\mathtt{2}}$ defined by 
$$\forall \ttheta \in W_{\mathtt{2}} , \quad |\ttheta|_{\mathtt{2}} = \sqrt{\frac{1}{n}  \sum_{i=1}^n \left( \frac{1}{n} \sum_{ j=1}^n |<x_i-x_j, \ttheta>| \right)^2}, $$
 the mean value theorem leads to
$$\forall (\ttheta_1,\ttheta_2) \in W_2 , \quad e^{-3 \rho} |\ttheta_1 - \ttheta_2|_{\mathtt{2}} \leq d_{\x} \left(\Phi_2 (\ttheta_1) , \Phi_2 (\ttheta_2) \right)   \leq e^{3 \rho} | \ttheta_1 - \ttheta_2|_{\mathtt{2}}, $$
which concludes the proof.
\end{proof}
We now prove Proposition~\ref{proppowerlawundercox2}. We derive from Lemma~\ref{lemmeproduitparametrised} that for all $\varrho \in (0,+\infty)$, all $r, R \in (b_0, +\infty)$ and  all $W \in \W$, the set 
$$V (r,R,W,\varrho ) = \left\{a  u_b v_{\ttheta} , \, a \in [0,+\infty), \, b \in [r,R], \, \ttheta' \in W,\, \|\ttheta'\| \leq \varrho \right\}$$
has a metric dimension bounded by $$C D_{V (r,R,W,\varrho )} =  1 + \varrho \dim W  +   \log \left(1 + \frac{\bar{\rho} (r) }{\underline{\rho} (R)} \right) $$
for some universal positive constant $C$.

Let us define the collection $\V$ by $$\V = \left\{ V \left(b_0 + 1/r , b_0 + R ,W,\varrho \right)  , \, W \in \W , \, r, R, \varrho \in \N^{\star} \right\}$$ and the map $\bar{\Delta}$ on $\V$ by $$\bar{\Delta}\left(  V (r,R,W,\varrho ) \right) =  \Delta(W) +  \log (2 R^2) + \log (2 r^2) + \log (2 \varrho^2) . $$
We apply Theorem~\ref{thmselectionmodelegeneral} with $(\V,\bar{\Delta})$ to build an estimator $\hat{s}$. For all $W \in \W$,  $\varrho, r, R \in \N^{\star}$,  $\ttheta' \in W$ such that $\|\ttheta'\| \leq \varrho$,  $a \in [0,+\infty)$,  $b \in [ b_0+1/r, b_0 + R]$, this estimator satisfies
\begin{eqnarray*}
C' \E\left[ H^2(s,\hat{s}) \right] &\leq&     d_2^2( \sqrt{s} , a  u_b v_{\ttheta'} )  \\
& &   +  \frac{1+  \varrho \dim W   + \log \left(1   + \frac{\bar{\rho} \left(b_0  + 1/r \right)}{\underline{\rho} (b_0 + R) } \right) + \Delta (W) + \log r + \log R  + \log \varrho }{n}   
\end{eqnarray*} 
where $C'$ is another  universal positive constant.

We may roughly upper-bound the right-hand side of this inequality to get
\begin{eqnarray*}
C'' \E\left[ H^2(s,\hat{s}) \right] &\leq&     d_2^2( \sqrt{s} , a  u_b v_{\ttheta'} )  \\
& &   +  \frac{1+  \varrho \dim W   + \log \left(1   \vee {\bar{\rho} \left(b_0  + 1/r \right)}\right) + \left|\log \left(1 \wedge {\underline{\rho} (b_0 + R) } \right) \right| + \Delta (W)}{n} \\
& & + \frac{\log r + \log R  + \log \varrho }{n}   
\end{eqnarray*} 
for some universal positive constant $C''$.

In particular, for all $W \in \W$, $\ttheta' \in W$, $a \in [0,+\infty)$ and $b \in I$, we may use this inequality with  $R = \inf \{i \in \N^{\star}, \, i \geq  b - b_0 \} $, $r =  \inf \{i \in \N^{\star}, i \geq 1/ (b - b_0) \}$, $\varrho = \inf \{i \in \N^{\star}, i \geq \|\ttheta'\| \}$ to derive
\begin{eqnarray*}
 C''' \E\left[ H^2(s,\hat{s}) \right] &\leq&     d_2^2( \sqrt{s} , a  u_b v_{\ttheta'} )    +  \frac{(1 \vee  \|\ttheta'\|) (1 \vee \dim W ) + \Delta(W)}{n} \\
& & + \frac{1}{n}  \left\{ \log \left[1 \vee \bar{\rho} \left(b_0 + \frac{b-b_0}{b-b_0+1} \right) \right] + \left|\log \left(1 \wedge \underline{\rho} (1+b) \right) \right| +  \left| \log (b - b_0) \right| \right\}
\end{eqnarray*} 
where $C'''$ is an universal positive constant. 

Now, by using the triangular inequality, we have for all  $\ttheta \in \R^{k_2}$, 
\begin{eqnarray*}
 d_2^2( \sqrt{s} , a  u_b v_{\ttheta'} )  &\leq&  2 \left(   d_2^2( \sqrt{s} , a  u_b v_{\ttheta} ) +  d_2^2(a u_b v_{\ttheta} , a  u_b v_{\ttheta'} \right)  \\
 &\leq&  2 \left(   d_2^2( \sqrt{s} , a  u_b v_{\ttheta} ) + a^2 \|u_b\|_{\t}^2  d_{\x}^2(v_{\ttheta} , v_{\ttheta'}) \right).
\end{eqnarray*} 
Some calculus shows that $d_{\x} (v_{\ttheta} , v_{\ttheta'} ) \leq e^{\|\ttheta\| \vee \|\ttheta'\|} \|\ttheta - \ttheta' \| $. 

Consequently, for all $a \in [0,+\infty)$, $b \in I$, $\ttheta \in \R^k$, $W \in \W$, we obtain (by taking  $\ttheta'$  the projection of~$\ttheta$ on $W$),
\begin{eqnarray*}
C'''' \E\left[ H^2(s,\hat{s}) \right] &\leq&   d_2^2 (\sqrt{s},   a  u_b v_{\ttheta} )  +      a^2 \|u_b\|_{\t}^2 e^{2 \|\theta\|} d^2 (\ttheta,W)  +  \frac{(1 \vee \dim W) (1 \vee \|\ttheta\|)   + \Delta(W) }{n}  \\
& & + \frac{1}{n}  \left\{ \log \left[1 \vee \bar{\rho} \left(b_0 + \frac{b-b_0}{b-b_0+1} \right) \right] + \left|\log \left(1 \wedge \underline{\rho} (1+b) \right) \right| +  \left| \log (b - b_0) \right| \right\} 
\end{eqnarray*}
where $C''''$ is an universal positive constant. 
We conclude by taking the infimum over all $W \in \W$. \carre

\subsection{Proofs of Lemmas~\ref{LemmeDuaneModelParametric} and \ref{LemmeDuaneModelParametric2}.}
\begin{proof} [Proof of Lemma~\ref{LemmeDuaneModelParametric}]
We derive from some calculus that for all $\theta_2, \theta_2' \in [-1/2 + 1/r_2,+\infty)$,
\begin{eqnarray*}
\int_{0}^1 (t^{\theta_2} - t^{\theta_2'})^2 \d t = \frac{2 (\theta_2-\theta_2')^2}{(1+2 \theta_2) (1+\theta_2+\theta_2') (1+2\theta_2')}  \leq 2 r_2^3 (\theta_2-\theta_2')^2.
\end{eqnarray*}
Hence, for all $(\theta_1,\theta_2), (\theta_1',\theta_2') \in  [-r_1,r_1] \times  [-1/2 + 1/r_2,+\infty)$, 
\begin{eqnarray*}
\sqrt{\int_{0}^1 (\theta_1 t^{\theta_2} - \theta_1' t^{\theta_2'})^2 \d t} &\leq& 
\sqrt{\int_{0}^1 (\theta_1  - \theta_1' )^2  t^{2 \theta_2'} \d t} + \sqrt{\int_{0}^1 \theta_1^2 (t^{\theta_2} -  t^{\theta_2'})^2 \d t} \\
 &\leq&  \frac{|\theta_1-\theta_1'|}{\sqrt{2 \theta_2' + 1}} +  \frac{\sqrt{2} |\theta_1| |\theta_2-\theta_2'|}{\sqrt{(1+2 \theta_2) (1+\theta_2+\theta_2') (1+2 \theta_2')}} \\
&\leq& r_2^{1/2} |\theta_1- \theta_1'| + \sqrt{2} r_1 r_2^{3/2} |\theta_2-\theta_2'|.
\end{eqnarray*}
This ends the proof.
\end{proof}
\begin{proof} [Proof of Lemma~\ref{LemmeDuaneModelParametric2}]
We derive from some calculus that for all   $\theta_2,\theta_2' \geq 1/r_2$,
\begin{eqnarray*}
\int_{0}^{\infty} \left( t^{k/2} e^{- \theta_2 t} - { t^{k/2}e^{-\theta_2' t}} \right)^2 \d t = \frac{k!}{2^{k+1}} \left(\frac{1}{\theta_2^k} + \frac{1}{\theta_2'^k} - \frac{2^{2+k}}{\left(\theta_2 + \theta_2'\right)^k}  \right).
\end{eqnarray*}
If $k = 0$,
\begin{eqnarray*}
\int_{0}^{\infty} \left( t^{k/2} e^{- \theta_2 t} - { t^{k/2}e^{-\theta_2' t}} \right)^2 \d t =  \frac{(\theta_2'-\theta_2)^2}{2 \theta_2'^2 \theta_2 + 2 \theta_2' \theta_2^2}  \leq \frac{r_2^3}{4} (\theta_2'-\theta_2)^2
\end{eqnarray*}
while if  $k = 1$,
\begin{eqnarray*}
\int_{0}^{\infty} \left( t^{k/2} e^{- \theta_2 t} - { t^{k/2}e^{-\theta_2' t}} \right)^2 \d t &=&  \frac{\theta_2'^2 + 4 \theta_2' \theta_2 + \theta_2^2}{4 \theta_2'^2 \theta_2^2 (\theta_2'+\theta_2)^2 } (\theta_2'-\theta_2)^2 \\
&\leq& \frac{3 r_2^4}{8}  (\theta_2'-\theta_2)^2.
\end{eqnarray*}
Hence, for all $(\theta_1,\theta_2), (\theta_1',\theta_2') \in  [-r_1,r_1] \times  [ 1/r_2,+\infty)$, 
\begin{eqnarray*}
\sqrt{\int_{0}^{\infty} \left(\theta_1 t^{k/2} e^{- \theta_2 t} - \theta_1' t^{k/2} e^{- \theta_2' t} \right)^2 \d t} &\leq& 
\sqrt{\int_{0}^{\infty}  (\theta_1  - \theta_1' )^2  t^k e^{- 2 \theta_2' t} \d t} \\
& & \qquad + |\theta_1| \sqrt{\int_{0}^{\infty} \left( t^{k/2} e^{- \theta_2 t} - { t^{k/2}e^{-\theta_2' t}} \right)^2 \d t} \\
\end{eqnarray*}
Now,
$$\sqrt{\int_{0}^{\infty}  (\theta_1  - \theta_1' )^2  t^k e^{- 2 \theta_2' t} \d t}  =   \frac{\sqrt{k!} |\theta_1-\theta_1'|}{ 2^{(k+1)/2} \theta_2'^{(k+1)/2} },$$
which ends the proof.
\end{proof}

\subsection{Proof of Theorem~\ref{thmestimationrobuste}.}
We start with the following proposition.
\begin{prop} \label{propdimensionmetricrobuste}
Suppose that Assumption~\ref{assumptionFRobuste}  holds. Let for all $j \in \{1,\dots,k\}$, $W_j$ be a  linear subspace of $\L^2 (\XX,\nu_n)$ with finite dimension and $Z_j$ be a bounded subset of $W_j$. Let then $\boldsymbol{\rho} \in [0,+\infty)^k$ such that for all $j \in \{1,\dots, k \}$,  
$ Z_j \subset \B_{\x} (0,\rho_j) = \{g \in \L^2 (\XX,\nu_n) , \, \|g\|_{\x} \leq \rho_j \} $.
Let $m_1,\dots,m_k \in \R \cup \{-\infty\}$ and $M_1,\dots,M_k \in \R \cup \{\infty\}$  be such that
 $$\Theta = \left\{ \boldsymbol{x} \in \R^k, \, \forall i \in \{1,\dots,k\}, \, m_i \leq x_i \leq M_i \right\}$$
and let $\pi$ be the map defined on $\R^k$ by
$$\pi (\boldsymbol{x}) = \big(  (x_1 \vee m_1) \wedge M_1,\dots,   (x_k \vee m_k) \wedge M_k  \big) \quad \text{for all $\boldsymbol{x} = (x_1,\dots,x_k) \in \R^k$.}$$
Let for all $\boldsymbol{u} \in   \prod_{j=1}^k Z_j$, $g_{\boldsymbol{u}}$ be the function defined by
$$g_{\boldsymbol{u} (x)} (t) = f_{\pi  \left(\boldsymbol{u} (x)\right)} (t) \quad \text{for all $(t,x) \in \TT \times \XX$.}$$
Then, the set $V$ defined by
 $$V = \left\{g_{\boldsymbol{u}}, \,  \boldsymbol{u} \in   \prod_{j=1}^k Z_j \right\}$$
has a metric dimension bounded by
$$D_V (\eta) = \frac{1}{2} \vee \frac{1}{4} \sum_{j=1}^k \log \left(1 + 2   \left(\frac{  k R_{j} }{ \eta}\right)^{1/\alpha_j}  \rho_j \right) \dim (W_j) .$$
\end{prop}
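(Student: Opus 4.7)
The plan is to build the net $S_V(\eta)$ by separately $\delta_j$-netting each $Z_j$ in $(\LL^2(\XX,\nu_n),d_{\x})$, with $\delta_j$ calibrated through a Lipschitz-type estimate on the parametrization $\boldsymbol{u}\mapsto g_{\boldsymbol{u}}$ coming from Assumption~\ref{assumptionFRobuste}, and then to assemble these component nets through that parametrization.

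The key estimate is the following. For $\boldsymbol{u},\tilde{\boldsymbol{u}}\in\prod_{j=1}^k Z_j$, observe that the coordinate-wise projection $\pi$ onto the closed rectangle $\Theta$ is $1$-Lipschitz in each coordinate, hence Assumption~\ref{assumptionFRobuste} gives, for every $i\in\{1,\dots,n\}$,
\[
\|f_{\pi(\boldsymbol{u}(x_i))}-f_{\pi(\tilde{\boldsymbol{u}}(x_i))}\|_{\t}\leq\sum_{j=1}^k R_j\,|u_j(x_i)-\tilde u_j(x_i)|^{\alpha_j}.
\]
Squaring with Cauchy--Schwarz, averaging over $i$, and applying Jensen's inequality to the concave map $y\mapsto y^{\alpha_j}$ (licit because $\alpha_j\leq 1$), one obtains
\[
d_2^2(g_{\boldsymbol{u}},g_{\tilde{\boldsymbol{u}}})\leq k\sum_{j=1}^k R_j^2\,d_{\x}(u_j,\tilde u_j)^{2\alpha_j}.
\]

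Choosing $\delta_j=(\eta/(kR_j))^{1/\alpha_j}$, the above bound ensures $d_2(g_{\boldsymbol{u}},g_{\tilde{\boldsymbol{u}}})\leq\eta$ as soon as $d_{\x}(u_j,\tilde u_j)\leq\delta_j$ for every $j$. Standard covering arguments (see Lemma~4 of~\cite{BirgeTEstimateurs}) yield, for each $j$, a set $T_j\subset W_j$ of cardinality at most $(1+2\rho_j/\delta_j)^{\dim W_j}$ that $\delta_j$-covers $Z_j\subset\B_{\x}(0,\rho_j)$ in the metric $d_{\x}$. Defining $S_V(\eta)=\{g_{\tilde{\boldsymbol{u}}}:\tilde u_j\in T_j\text{ for every }j\}$ yields an $\eta$-net of $V$ whose total cardinality is bounded by $\prod_{j=1}^k(1+2\rho_j(kR_j/\eta)^{1/\alpha_j})^{\dim W_j}$. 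Since $x\geq 2$ gives $x^2/4\geq 1$, the trivial bound $|S_V(\eta)\cap\B(\varphi,x\eta)|\leq|S_V(\eta)|$ followed by a logarithm yields the required local cardinality bound with
\[
D_V(\eta)=\frac{1}{4}\sum_{j=1}^k\log\bigl(1+2\rho_j(kR_j/\eta)^{1/\alpha_j}\bigr)\dim W_j,
\]
the $\tfrac{1}{2}\vee$ in the statement simply enforcing the lower bound $D_V(\eta)\geq 1/2$ required by Definition~\ref{definitionmetricspace}.

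The main obstacle, though short once identified, lies in the Lipschitz estimate: the interchange of $\alpha_j$-power and average over $i$ crucially requires the concavity of $y\mapsto y^{\alpha_j}$, i.e., $\alpha_j\leq 1$ as provided by Assumption~\ref{assumptionFRobuste}; without it the inequality would go in the wrong direction and the calibration would collapse. The remainder is bookkeeping, once one checks that the componentwise projection onto a closed rectangle is $1$-Lipschitz in each coordinate so that Assumption~\ref{assumptionFRobuste} still applies with the original constants $R_j$ and exponents $\alpha_j$ after composition with $\pi$.
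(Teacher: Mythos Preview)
Your proposal is correct and follows essentially the same route as the paper's proof: the paper also sets $\eta_j=(\eta/(kR_j))^{1/\alpha_j}$, builds an $\eta_j$-net of each $Z_j$ via Lemma~4 of \cite{BirgeTEstimateurs}, takes the product to form $S(\eta)$, proves it is an $\eta$-net of $V$ through the same chain (Assumption~\ref{assumptionFRobuste}, Cauchy--Schwarz, concavity of $y\mapsto y^{\alpha_j}$), and then bounds $|S(\eta)\cap\B(\varphi,x\eta)|$ by the full cardinality using $x^2\geq 4$. The only cosmetic differences are that the paper picks its nets inside $Z_j$ (as maximal separated subsets) rather than in $W_j$, and that it leaves the $1$-Lipschitz property of the coordinatewise projection $\pi$ implicit, whereas you spell it out.
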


\begin{proof}[Proof of Proposition~\ref{propdimensionmetricrobuste}]
As in the proof of Lemma~\ref{lemmeproduitlinear}, we say that a set $S(\eta)$ is a $\eta$-net of a set $V$ in a metric space $(E,d)$ if, for all $y \in V$, there exists $x \in S(\eta)$ such that $d(x,y) \leq \eta$.
 
Let  $\eta > 0$ and  for $j \in \{1,\dots , k\}$, 
 $$\eta_j =  \left(\frac{\eta}{ k R_j} \right)^{1/ \alpha_j}.$$
Let $Z_j' (\eta_j)$ be a maximal subset of $Z_j$ such that $d_{\x} (x, y) > \eta_j$ for all $x \neq y \in Z_j' (\eta)$. This is a $\eta_j$-net of $Z_j$ such that
\begin{eqnarray*}
|Z_j' (\eta_j)| \leq \left|Z_j' (\eta_j) \cap \B_{\x} (0, \rho_j) \right| 
\end{eqnarray*}
and by using Lemma~4 of \cite{BirgeTEstimateurs},
\begin{eqnarray} \label{eqMajorationZjprime}
|Z_j' (\eta_j)| \leq \left(\frac{2 \rho_j}{\eta_j} + 1 \right)^{\dim W_j}.
\end{eqnarray}
To prove the proposition, we begin to show that the set
$$S(\eta) = \left\{g_{\boldsymbol{u}} , \,  \boldsymbol{u} \in \prod_{j=1}^k Z_j' \left( \eta_j \right)   \right\} $$
is a $\eta$-net of $V$.  

Let $f \in V$ be the function of the form $f(t,x) = g_{\boldsymbol{u} (x)}(t) = f_{\pi \left(\boldsymbol{u} (x)\right)} (t)$  and for all $j \in \{1,\dots,k\}$, let $v_j \in Z_j'  \big( \eta_j \big) $ such that $d_{\x} (u_j, v_j) \leq\eta_j$. We define $\boldsymbol{v} = (v_1, \dots, v_k)$ and $g \in S(\eta)$ by $g (t,x) =  g_{\boldsymbol{v} (x)}(t) = f_{\pi \left(\boldsymbol{v} (x)\right)} (t)$. Then,
\begin{eqnarray*}
\left\|f - g \right\|^2_{2} &=& 
\frac{1}{n} \sum_{i=1}^n \left\|f_{\pi \left(\boldsymbol{u} (x_i)\right)} ( \cdot) - f_{\pi \left(\boldsymbol{v} (x_i)\right)} (\cdot) \right\|^2_{\t} \\
&\leq& \frac{1}{n} \sum_{i=1}^n  \left(\sum_{j=1}^k R_j \big|u_j(x_i) - v_j(x_i) \big|^{\alpha_j} \right)^2 .
\end{eqnarray*}
By using the Cauchy-Schwarz inequality,
\begin{eqnarray*}
\left\|f - g \right\|^2_{2}  &\leq& \frac{1}{n} \sum_{i=1}^n  k \left(\sum_{j=1}^k R_j^2 \big|u_j(x_i) - v_j(x_i) \big|^{2 \alpha_j} \right) \\
&\leq&   k  \sum_{j=1}^k R_j^2   \left(\frac{1}{n} \sum_{i=1}^n \big|u_j(x_i) - v_j(x_i) \big|^{2 \alpha_j} \right).
\end{eqnarray*}
By using the concavity of the map $x \mapsto x^{\alpha_j}$,
\begin{eqnarray*}
\left\|f - g \right\|^2_{2}  &\leq&   k  \sum_{j=1}^k R_j^2   \left(\frac{1}{n} \sum_{i=1}^n \big|u_j(x_i) - v_j(x_i) \big|^{2} \right)^{\alpha_j} \\
&\leq&   k  \sum_{j=1}^k R_j^2 d_{\x}^{2 \alpha_j} (u_j,v_j) \\
&\leq& \eta^2
\end{eqnarray*}
as wished.
 
 We now consider  $x \geq 2$, $\varphi \in \L^2 (\TT \times \XX, M)$ and aim at bounding from above the cardinality of $S (\eta) \cap \B(\varphi, x \eta)$. We have,
\begin{eqnarray*}
|S (\eta) \cap \B(\varphi, x \eta)| \leq \prod_{j=1}^k \left| Z_j' \left( \eta_j \right)\right|. 
\end{eqnarray*}
By using (\ref{eqMajorationZjprime}),
\begin{eqnarray*}
|S (\eta) \cap \B(\varphi, x \eta)| &\leq& \prod_{j=1}^k \left(2  \left(\frac{  k  R_{j} }{\eta}\right)^{1/\alpha_j}  \rho_j + 1 \right)^{\dim (W_j)} \\
&\leq& \exp \left(\frac{1}{4}\sum_{j=1}^k  \dim (W_j) \log \left(2 \left(\frac{  k  R_{j} }{\eta}\right)^{1/\alpha_j}  \rho_j + 1 \right)  x^2\right).
\end{eqnarray*}
This ends the proof.
\end{proof}
\begin{lemme} \label{calculdeeta}
Let $V$ be a set with metric dimension bounded by $D_V$. Assume that there exist $k \in \N^{\star}$, $\boldsymbol{a}, \boldsymbol{b} \in [0,+\infty)^k $ such that $\max_{1 \leq j \leq k} a_j \geq 1$,  $\min_{1 \leq j \leq k} b_j  \geq 1$ and such that  
$$ D_V (\eta) \leq \frac{1}{2} \vee \sum_{j=1}^k  a_j \log \left(1+  \frac{b_j}{\eta} \right) \quad \text{for all $\eta > 0$.} $$
Then, there exists an universal positive constant $C$  such that
$$C \eta_V^2 \leq \frac{ \sum_{j=1}^k a_j \log (1+b_j)}{ n} + \frac{\sum_{j=1}^k a_j}{ n} \log \left( 1+ \frac{ n}{\sum_{j=1}^k a_j} \right)$$
where
$$\eta_V = \inf \left\{ \eta > 0 , \; \frac{D_V (\eta)}{\eta^2} \leq   n \right\}. $$
\end{lemme}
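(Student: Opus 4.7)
\medskip

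\textbf{Proof proposal.} The plan is to exhibit an explicit $\eta_\star$ with $\eta_\star^2 \leq 2\sigma^2$ (where $\sigma^2$ denotes the right-hand side of the claim) such that $D_V(\eta_\star) \leq n\eta_\star^2$; by the definition of $\eta_V$ this will immediately give $\eta_V \leq \eta_\star$ and hence the result with $C = 1/2$. Set $A = \sum_j a_j$, $\alpha_0 = \tfrac{1}{n}\sum_j a_j\log(1+b_j)$, and $\alpha_1 = \tfrac{A}{n}\log(1+n/A)$, and I will take $\eta_\star^2 = 2(\alpha_0 + \alpha_1)$.

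The key elementary inequality I plan to use is
\[
\log\!\left(1+\tfrac{b_j}{\eta}\right) \;=\; \log\!\left(\tfrac{\eta+b_j}{\eta}\right) \;\leq\; \log(1+b_j) + \log^+(1/\eta),
\]
which is justified by splitting cases: if $\eta \geq 1$ then $\eta + b_j \leq \eta(1+b_j)$, and if $\eta \leq 1$ then $\eta + b_j \leq 1+b_j$. Summing with weights $a_j$ gives
\[
D_V(\eta) \;\leq\; \tfrac{1}{2} \;\vee\; \bigl(n\alpha_0 + A\log^+(1/\eta)\bigr).
\]
The second crucial ingredient, which is where the hypothesis $\min_j b_j \geq 1$ enters, is that $n\alpha_0 \geq A\log 2$.

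Now I verify $n\eta_\star^2 \geq D_V(\eta_\star)$ in two cases. If $\eta_\star \geq 1$, the bound reduces to $n\eta_\star^2 \geq n\alpha_0$, which is immediate from $\eta_\star^2 = 2(\alpha_0+\alpha_1) \geq \alpha_0$. If $\eta_\star < 1$, write $\log(1/\eta_\star) = \tfrac{1}{2}\log\!\bigl(1/(2(\alpha_0+\alpha_1))\bigr)$, so the required inequality becomes
\[
n\alpha_0 + 2n\alpha_1 \;\geq\; \tfrac{A}{2}\log\!\bigl(1/(2(\alpha_0+\alpha_1))\bigr).
\]
Using $n\alpha_0 \geq A\log 2$ and $n\alpha_1 = A\log(1+n/A)$, the left side is at least $A\log\bigl(2(1+n/A)^2\bigr)$, so it suffices to show $8(1+n/A)^4(\alpha_0+\alpha_1) \geq 1$. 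Bounding $\alpha_0+\alpha_1 \geq \alpha_1 = \tfrac{\log(1+v)}{v}$ with $v = n/A$, and then applying the standard inequality $\log(1+v) \geq v/(1+v)$, yields $8(1+v)^4\log(1+v)/v \geq 8(1+v)^3 \geq 8 \geq 1$, as wanted. The constraint $D_V(\eta_\star) \geq 1/2$ is taken care of separately but trivially: $n\eta_\star^2 \geq 2n\alpha_1 \geq 2\log 2 > 1/2$ (using $\alpha_1 \geq \log(1+n)/n$, since $u \mapsto \log(1+u)/u$ is decreasing and $n \geq A \cdot n/A$ combined with $A \geq 1$).

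The main obstacle, which I was initially worried about, is that the naive decomposition $\log(1+b_j/\eta) \leq \log(1+b_j)+\log(1+1/\eta)$ is too crude when $A \gg n$: it forces $A\log(1+1/\eta)$ to be absorbed by $A$-free quantities and fails for small $\eta$. The fix, essential to the proof, is twofold---replace $\log(1+1/\eta)$ by the sharper $\log^+(1/\eta)$ (which exploits that for $\eta \geq 1$ no $\eta$-dependence is needed at all), and use the lower bound $n\alpha_0 \geq A\log 2$ coming from $b_j \geq 1$ to absorb any remaining $A$-proportional terms. Once these two observations are combined, the verification reduces to the elementary inequality $\log(1+v) \geq v/(1+v)$ and the factor $2$ in $\eta_\star^2 = 2(\alpha_0+\alpha_1)$ is sufficient.
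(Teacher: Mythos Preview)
Your argument is correct and takes a genuinely different route from the paper. The paper replaces the assumed bound on $D_V$ by the explicit envelope
\[
D_V(\eta)=2\sum_j a_j\log(2b_j)+2\Bigl(\sum_j a_j\Bigr)\log(1/\eta)\quad(\eta<1),
\]
then solves $D_V(\eta)=n\eta^2$ \emph{exactly} via the Lambert $W$ function, obtaining $\eta^2=\frac{\beta}{n}\,L\bigl(e^{2\alpha/\beta}n/\beta\bigr)$, and finally invokes unspecified ``elementary inequalities on the Lambert function'' to reach the claimed bound. Your approach bypasses the exact solution entirely: you guess the target $\eta_\star^2=2(\alpha_0+\alpha_1)$ and verify $D_V(\eta_\star)\le n\eta_\star^2$ directly, using only the decomposition $\log(1+b_j/\eta)\le\log(1+b_j)+\log^+(1/\eta)$ together with $\log(1+v)\ge v/(1+v)$. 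This is more elementary and fully self-contained, and it delivers the explicit constant $C=1/2$ rather than an unspecified universal one. The paper's route has the merit of identifying the \emph{exact} threshold, which can be useful if one cares about sharp constants, but as written it leaves the final step to the reader; your argument is complete as stated.
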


\begin{proof} [Proof of Lemma \ref{calculdeeta}]
The larger $D_V$, the larger  $\eta_V$. Consequently, 
without lost of generality we can assume that
\begin{equation*}
  D_V (\eta) =
  \begin{cases}
  2  \sum_{j=1}^k  a_j \log (2 b_j) + 2 \left( \sum_{j=1}^k a_j \right) \log \left( \frac{1}{\eta} \right) & \text{if $\eta < 1$ } \\
    2 \sum_{j=1}^k a_j \log (2 b_j) & \text{otherwise.}
  \end{cases}
\end{equation*}
Remark that for all $\alpha, \beta, y > 0$, the equation 
$$\alpha + \beta \log x = \frac{y}{2 x^2} $$
has only one positive solution $x$ given by
$$x^2 = \frac{y}{\beta  L \left( \frac{e^{\frac{2 \alpha}{\beta}}}{\beta} y \right)}, $$
where $L$ is the Lambert function, defined as being the inverse function of $t \mapsto t e^{t}$.
Consequently, by setting $$\alpha = \sum_{j=1}^k a_j \log (2 b_j) \quad \text{and} \quad \beta = \sum_{j=1}^k a_j$$ we derive that the positive number $\eta$ defined by
\begin{equation*}
\eta^2 =
  \begin{cases}
  	\frac{\beta  } {n} L \left( \frac{e^{\frac{2 \alpha}{\beta}}}{\beta} n \right) & \text{if $n > 2 \alpha$ } \\
    \frac{2 \alpha}{ n} & \text{if $n \leq 2 \alpha$  } 
  \end{cases}
\end{equation*}
is such that $D_V (\eta) = n \eta^2$. In particular, $\eta_V \leq \eta$. The conclusion ensues from some elementary inequalities on the Lambert function.
\end{proof}

We derive from this lemma the following result.
\begin{lemme} \label{lemmeMajorationetaV}
Under the notations and assumptions of  Proposition~\ref{propdimensionmetricrobuste}, there exists an universal positive constant $C$ such that
$$C \eta_{ V}^2  \leq \frac{1}{n} \sum_{j=1}^k \left( \frac{\dim (W_j) \vee 1}{\alpha_j}  \right) \left(\log \left(1 +  k R_{j} \rho_j^{\alpha_j} \right) + \log n  \right). $$
\end{lemme}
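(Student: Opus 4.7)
The plan is to apply Lemma~\ref{calculdeeta} to the metric dimension bound provided by Proposition~\ref{propdimensionmetricrobuste}. The difficulty is that Proposition~\ref{propdimensionmetricrobuste} involves terms of the form $\log(1 + c_j/\eta^{1/\alpha_j})$ with $c_j = 2(kR_j)^{1/\alpha_j}\rho_j$, whereas Lemma~\ref{calculdeeta} is formulated for bounds of the form $\log(1+b_j/\eta)$. Reconciling the two exponents of $\eta$ is the main step.

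I would first establish the elementary inequality
$$\log(1+y) \leq \frac{2}{\alpha}\log(1+y^{\alpha}) \quad \text{for all } y \geq 0, \; \alpha \in (0,1],$$
by distinguishing $y \leq 1$ (where $\log(1+y)\leq y \leq y^{\alpha}$ and $y^{\alpha}\leq 2\log(1+y^{\alpha})$ because $\log(1+z)\geq z/2$ on $[0,1]$) and $y \geq 1$ (where $\log(1+y) \leq 2\log(1 \vee y)$ and $\log(1+y^{\alpha})\geq \alpha \log y$). Applied with $y = c_j \eta^{-1/\alpha_j}$ and $\alpha = \alpha_j \in (0,1]$ (from Assumption~\ref{assumptionFRobuste}), this gives
$$\log\!\left(1+\frac{c_j}{\eta^{1/\alpha_j}}\right) \leq \frac{2}{\alpha_j}\log\!\left(1+\frac{2^{\alpha_j}kR_j\rho_j^{\alpha_j}}{\eta}\right).$$
Plugging this into the bound of Proposition~\ref{propdimensionmetricrobuste} and replacing each $\dim W_j$ by $\dim W_j \vee 1$ (which can only enlarge the right-hand side) yields
$$D_V(\eta) \leq \frac{1}{2} \vee \sum_{j=1}^k a_j \log\!\left(1+\frac{b_j}{\eta}\right), \quad a_j = \frac{\dim W_j \vee 1}{\alpha_j}, \quad b_j = 1 \vee 2^{\alpha_j}kR_j\rho_j^{\alpha_j}.$$
Because $\alpha_j \leq 1$ we have $a_j \geq 1$, and $b_j \geq 1$ by construction, so the hypotheses of Lemma~\ref{calculdeeta} are satisfied.

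Applying Lemma~\ref{calculdeeta} then produces
$$C'\eta_V^2 \leq \frac{1}{n}\sum_{j=1}^k a_j \log(1+b_j) + \frac{\sum_j a_j}{n}\log\!\left(1+\frac{n}{\sum_j a_j}\right).$$
To finish, I would bound $\log(1+b_j) \leq \log 2 + \log(1+kR_j\rho_j^{\alpha_j})$ (using $2^{\alpha_j}\leq 2$ and $\log(1+2x)\leq \log 2 + \log(1+x)$), and use $\sum_j a_j \geq 1$ to get $\log(1+n/\sum_j a_j) \leq \log(1+n) \leq \log 2 + \log n$. Because $n\geq 2$ throughout Section~\ref{sectionrobuste}, the extra $\log 2$ constants can be absorbed into $\log n$ by enlarging the universal constant, delivering the announced estimate. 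The only non-routine step is the reshaping of $\log(1+c_j/\eta^{1/\alpha_j})$ in the first paragraph; the rest is bookkeeping.
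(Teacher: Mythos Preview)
Your approach is essentially the paper's: bound $D_V(\eta)$ by turning $\log(1+c_j\eta^{-1/\alpha_j})$ into $\alpha_j^{-1}\log(1+c_j^{\alpha_j}/\eta)$, then apply Lemma~\ref{calculdeeta} with $a_j=\alpha_j^{-1}(\dim W_j\vee 1)$ and $b_j=1\vee 2^{\alpha_j}kR_j\rho_j^{\alpha_j}$. Only two remarks. First, your case analysis for the elementary inequality has a gap: for $1\le y<(1+\sqrt{5})/2$ the claim $\log(1+y)\le 2\log(1\vee y)=2\log y$ is false. The clean route (the one the paper implicitly uses) is the subadditivity inequality $(1+y)^{\alpha}\le 1+y^{\alpha}$ for $\alpha\in(0,1]$, $y\ge 0$, which follows by comparing derivatives and yields directly $\log(1+y)\le \alpha^{-1}\log(1+y^{\alpha})$ with no factor~$2$ and no case split. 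Second, the factor $1/4$ in the bound of Proposition~\ref{propdimensionmetricrobuste} combined with your factor $2/\alpha_j$ produces $a_j=\tfrac{1}{2}\alpha_j^{-1}(\dim W_j\vee1)$ rather than $\alpha_j^{-1}(\dim W_j\vee1)$; this is harmless for the final statement since everything is up to a universal constant, but your displayed formula for $a_j$ is off by that factor.
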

\begin{proof}
We can upper bound $D_V(\eta)$ as follows.
\begin{eqnarray*}
D_V (\eta) &=& \frac{1}{2} \vee \frac{1}{4} \sum_{j=1}^k \log \left(1 + 2 \left(\frac{  k R_{j} }{ \eta}\right)^{1/\alpha_j}  \rho_j \right) \dim (W_j)  \\
&\leq& \frac{1}{2} \vee \frac{1}{4} \sum_{j=1}^k \frac{1}{\alpha_j} \log \left( 1 + 2^{\alpha_j}  \frac{  k R_{j} }{ \eta}  \rho_j^{\alpha_j} \right) \dim (W_j).
\end{eqnarray*}
We then use Lemma~\ref{calculdeeta} with $a_j = \alpha_j^{-1}  \big(1 \vee \dim W_j\big)$ and $b_j = 1 \vee \big(2^{\alpha_j}  k R_j \rho_j^{\alpha_j}\big)$ (we recall that $\alpha_j \leq 1$). There exists thus an universal constant $C'$ such that
$$C' \eta_{ V}^2  \leq \frac{1}{n} \sum_{j=1}^k \left( \frac{\dim (W_j) \vee 1}{\alpha_j}  \right) \left[\log \left(1 +   1 \vee \big(2^{\alpha_j}   k R_j \rho_j^{\alpha_j}\big) \right) + \log \left( 1 +  \frac{n}{\sum_{i=1}^k \left( \frac{\dim (W_i) \vee 1}{\alpha_i}  \right)}\right)  \right]. $$
We now roughly upper-bound  the right-hand side of this inequality to end the proof.
\end{proof}
Let us return to the proof of Theorem~\ref{thmestimationrobuste}.
For all $W_j \in \W_j$, $\rho_j \in \N^{\star}$, we introduce the set $Z_j (W_j,\rho_j) =   W_j \cap \B_{\x} (0, \rho_j)$ where
$\B_{\x} (0, \rho_j)$ is the closed ball centered at $0$ with radius $\rho_j$ of the metric space $(\LL^2(\XX,\nu_n), d_{\x})$.
For all $\boldsymbol{W} = (W_1,\dots,W_k) \in \prod_{j=1}^k \W_j$ , $\boldsymbol{\rho} = (\rho_1,\dots, \rho_k) \in \left(\N^{\star} \right)^k$, we define
$$V \left(\boldsymbol{W}, \boldsymbol{\rho} \right)  = \left\{(t,x) \mapsto f_{\pi \left(\boldsymbol{u} (x)\right)} (t) , \, \boldsymbol{u} \in \prod_{j=1}^k Z_j (W_j, \rho_j)  \right\}.$$
We then define  $$\V = \left\{V \left(\boldsymbol{W}, \boldsymbol{\rho} \right)  ,\,  \boldsymbol{W} \in \prod_{j=1}^k \W_j , \, \boldsymbol{\rho} \in \left(\N^{\star} \right)^k  \right\}$$
and we define the map $\Delta$ on $\V$ by
$$\Delta \left( V \left(\boldsymbol{W},\boldsymbol{\rho} \right) \right) = \sum_{j=1}^k \left( \Delta_j (W_j) +   \log \left(2 \rho_j^2 \right) \right). $$
We apply Theorem~\ref{thmselectionmodelegeneral} with $(\V,\Delta)$ to build an estimator~$\hat{s}$. For all $\boldsymbol{W} = (W_1,\dots,W_k) \in \prod_{j=1}^k \W_j$ ,  $\boldsymbol{\rho} = (\rho_1,\dots,\rho_k) \in \left(\N^{\star} \right)^k$,   
\begin{eqnarray} \label{eqDansPreuveetaV}
C \E\left[H^2(s, \hat{s}) \right]  \leq    d^2_2 \left(\sqrt{s},   V \left(\boldsymbol{W}, \boldsymbol{\rho} \right)\right) + \eta_{ V \left(\boldsymbol{W}, \boldsymbol{\rho} \right)}^2 + \frac{  \Delta \left( V\left(\boldsymbol{W}, \boldsymbol{\rho} \right) \right) }{n}    
\end{eqnarray}
where $C$ is an universal positive constant.
We then derive from Lemma~\ref{lemmeMajorationetaV} that there exists an universal positive constant $C'$ such that
$$C' \eta_{ V \left(\boldsymbol{W}, \boldsymbol{\rho} \right)}^2  \leq \frac{1}{n} \sum_{j=1}^k \left( \frac{\dim (W_j) \vee 1}{\alpha_j}  \right) \left(\log \left(1 +  k R_{j} \rho_j^{\alpha_j} \right) + \log n  \right). $$
In particular, for all function $f \in \F$ of the form $f (t,x) = f_{\boldsymbol{u}(x)}(t)$, for all $\boldsymbol{W}  \in \prod_{j=1}^k \W_j$, for all map $\boldsymbol{v} = (v_1,\dots,v_k) \in \prod_{j=1}^k W_j$, such that for all $j \in \{1,\dots,k\}$,  $\|v_j\|_{\x} \leq \|u_j\|_{\x}$, and for all function~$g$ of the form $g(t,x) = f_{\pi \left(\boldsymbol{v}(x) \right)} (t)$,   inequality (\ref{eqDansPreuveetaV}) used with $\rho_j = \inf \{i \in \N^{\star} , \, i \geq  \|u_j\|_{\x} \}$ leads to
\begin{eqnarray*}
 C'' \E\left[H^2(s, \hat{s}) \right] &\leq&   d^2_2 \left(\sqrt{s},  f  \right) +  d^2_2 \left( f, g  \right)+ \frac{ \sum_{j=1}^k \left( \Delta_j (W_j) +  \log (1+ \|u_j\|_{\x}) \right)  }{n}     \\
& &  +  \frac{1}{n} \sum_{j=1}^k \left( \frac{\dim (W_j) \vee 1}{\alpha_j}  \right) \left[\log \left(1 +  k R_{j}  (1 + \|u_j\|_{\x})^{\alpha_j } \right) + \log n  \right]
\end{eqnarray*}
where $C''$ is an universal positive constant.

By using Assumption~\ref{assumptionFRobuste} and the Cauchy-Schwarz inequality,
\begin{eqnarray*}
 d^2_2 \left( f , g  \right)  \leq k \sum_{j=1}^k R_{j}^2 \|u_j  - v_j  \|_{\x}^{2 \alpha_j}.
 \end{eqnarray*}
We then  choose $v_j$ as being the projection of $u_j$ on $W_j$ in the space $\L^2 (\XX,\nu_n)$, and take the infimum over all $\boldsymbol{W}  \in \prod_{j=1}^k \W_j$ to conclude.

\subsection{Proof of Corollary~\ref{corrobustepowerlaw}.}
Let $r_1,r_2 \in \N^{\star}$. We may apply Theorem~\ref{thmestimationrobuste} with collections $\W_1, \W_2$ provided by Proposition~1 of~\cite{BaraudComposite}. This yields an estimator~$\tilde{s}$ such that for all  $a \in \mathcal{H}^{\boldsymbol{\alpha}} ([0,1]^{k_2})$ with values into $[-r_1,r_1]$, for all $b \in \mathcal{H}^{\boldsymbol{\beta}} ([0,1]^{k_2})$ with values into $[-1/2+1/r_2,+\infty)$, 
\begin{eqnarray*} \label{eqestmiationrobuste}
C \E\left[ H^2(s,\tilde{s})  \right]  \leq    d_2^2 ( \sqrt{s} , f ) +  \varepsilon_1 (a)  +  \varepsilon_2 (b)  
\end{eqnarray*}
where 
\begin{eqnarray*}
C_1 \varepsilon_1 (a) &\leq& \left(r_2^{1/2} L(a) \right)^{\frac{2 k_2}{k_2 + 2  \bar{\boldsymbol{\alpha}}}} \left(\frac{ \log  n  + \log (1 \vee r_2^{1/2}) + \log \left( 1 \vee \|a\|_{\x} \right)}{ n} \right)^{\frac{2  \bar{\boldsymbol{\alpha}}}{2 \bar{\boldsymbol{\alpha}} + k_2}}  \\
& & \quad   + \frac{\log  n  + \log (1 \vee r_2^{1/2}) + \log \left( 1 \vee \|a\|_{\x} \right) }{ n}  \\
C_2 \varepsilon_2 (b) &\leq& \left(\sqrt{2} r_1 r_2^{3/2} L(b) \right)^{\frac{2 k_2}{k_2 + 2  \bar{\boldsymbol{\beta}}}} \left(\frac{ \log  n  + \log (1 \vee \sqrt{2} r_1 r_2^{3/2}) + \log \left( 1 \vee \|b\|_{\x} \right)}{ n} \right)^{\frac{2  \bar{\boldsymbol{\beta}}}{2 \bar{\boldsymbol{\beta}} + k_2}}  \\
& & \quad   + \frac{\log  n  + \log (1 \vee \sqrt{2} r_1 r_2^{3/2}) + \log \left( 1 \vee \|b\|_{\x} \right) }{ n}  
\end{eqnarray*}
where  $C > 0$ is universal, where $C_1 > 0$ depends only on $k_2$, $\max_{1 \leq j \leq k_2} \alpha_j$, and where $C_2 > 0$ depends only on $k_2$, $\max_{1 \leq j \leq k_2} \beta_j$.

The above estimator depends on $r_1$ and $r_2$. We can thus use Proposition~\ref{propmixing}, to derive that there exists an estimator $\hat{s}$, such that for all $r_1,r_2 \in \N^{\star}$, for all  $a \in \mathcal{H}^{\boldsymbol{\alpha}} ([0,1]^{k_2})$ with values into $[-r_1,r_1]$, for all $b \in \mathcal{H}^{\boldsymbol{\beta}} ([0,1]^{k_2})$ with values into $[-1/2+1/r_2,+\infty)$, 
\begin{eqnarray*} \label{eqestmiationrobuste}
C'' \E\left[ H^2(s,\hat{s})  \right]  \leq    d_2^2 ( \sqrt{s} , f ) +  \varepsilon_1 (a)  +  \varepsilon_2 (b)  + \frac{\log r_1 + \log r_2}{n}
\end{eqnarray*}
where $C'' > 0$ is universal.

In particular, if we choose $r_1$ as being the smallest integer larger than $\|a\|_{\infty}$ and $r_2$ as being the smallest integer larger than $2/(\inf_{x \in [0,1]^{k_2}} (2 b(x) + 1))$, we get
\begin{eqnarray*}
C_1' \varepsilon_1 (a) \leq \left( \sqrt{1 + \frac{2}{\inf_{x \in [0,1]^{k_2}} (2 b(x) + 1)} } L(a) \right)^{\frac{2 k_2}{k_2 + 2  \bar{\boldsymbol{\alpha}}}} \left(\frac{ \log  n}{n} \right)^{\frac{2  \bar{\boldsymbol{\alpha}}}{2 \bar{\boldsymbol{\alpha}} + k_2}}  + C''_1 \frac{\log n}{n}
\end{eqnarray*}
where $C_1'  > 0$ depends only on $k_2$, $\max_{1 \leq j \leq k_2} \alpha_j$ and where $C''_1$ depends only on $k_2, \bar{\boldsymbol{\alpha}}, \|a\|_{\infty}, L(a), \|b\|_{\infty}$ and $\inf_{x \in [0,1]^{k_2}} (2 b(x) + 1)$.
We then use 
$$1 + \frac{2}{\inf_{x \in [0,1]^{k_2}} (2 b(x) + 1)} \leq \frac{3}{1 \wedge \inf_{x \in [0,1]^{k_2}} (2 b(x) + 1)}$$
to get
\begin{eqnarray*}
 C_1''' \varepsilon_1 (a) \leq \left(\frac{1}{1 \wedge \inf_{x \in [0,1]^{k_2}} (2 b(x) + 1)} \right)^{\frac{ k_2}{k_2 + 2  \bar{\boldsymbol{\alpha}}}}  L(a)^{\frac{2 k_2}{k_2 + 2  \bar{\boldsymbol{\alpha}}}} \left(\frac{ \log  n}{n} \right)^{\frac{2  \bar{\boldsymbol{\alpha}}}{2 \bar{\boldsymbol{\alpha}} + k_2}}  + C_1'''' \frac{\log n}{n}.
\end{eqnarray*}
We bound from above $ \varepsilon_2 (b)$ in the same manner. \qed

\section*{Acknowledgement}
Many thanks to Yannick Baraud for his suggestions, comments and careful reading of the paper.
\bibliographystyle{apalike}
 \bibliography{bibliopubcox}

\end{document}